\newtheorem{thm}{Theorem}[section]
\newtheorem{cor}[thm]{Corollary}
\newtheorem{lem}[thm]{Lemma}
\newtheorem{prop}[thm]{Proposition}
\newtheorem{defn}[thm]{Definition}
\newtheorem{rem}[thm]{Remark}
\newcommand{\abs}[1]{\left\vert#1\right\vert}
\newcommand{\norm}[1]{\left\Vert#1\right\Vert}
\numberwithin{equation}{section}
\def\Xint#1{\mathchoice
   {\XXint\displaystyle\textstyle{#1}}%
   {\XXint\textstyle\scriptstyle{#1}}%
   {\XXint\scriptstyle\scriptscriptstyle{#1}}%
   {\XXint\scriptscriptstyle\scriptscriptstyle{#1}}%
   \!\int}
\def\XXint#1#2#3{{\setbox0=\hbox{$#1{#2#3}{\int}$}
     \vcenter{\hbox{$#2#3$}}\kern-.5\wd0}}
\def\dashint{\Xint-}
\begin{document}
\title[Nonlinear equations with natural growth terms]{The fundamental solution of nonlinear equations with natural growth terms}

\author{Benjamin J. Jaye}
\address{Department of Mathematics, University of Missouri, Columbia, Missouri 65211, USA}
\email{bjjm93@mizzou.edu}

\author{Igor E. Verbitsky}
\address{Department of Mathematics, University of Missouri, Columbia, Missouri 65211, USA}
\email{verbitskyi@missouri.edu}

\thanks{Supported in part
by NSF grant  DMS-0901550.}

\keywords{Quasilinear equations, Hessian equations, fundamental solutions, natural growth terms, Wolff potentials, discrete Carleson measures}

\subjclass[2000]{Primary 35J60,  42B37. Secondary 31C45, 35J92, 42B25}


\begin{abstract}
We find bilateral global bounds for the fundamental solutions associated with some quasilinear and fully nonlinear operators perturbed by a nonnegative zero order term with natural growth under minimal assumptions.   Important  model problems 
involve the equations $-\Delta_p u = \sigma \abs{u}^{p-2}u + \delta_{x_0}$, for $p>1$, and $F_k(-u) = \sigma \abs{u}^{k-1}u + \delta_{x_0}$, for $k \geq 1$.  Here $\Delta_p$ and  $F_k$ are the $p$-Laplace and $k$-Hessian operators respectively, and $\sigma$ is an 
arbitrary positive measurable function (or measure).  We will in addition consider the Sobolev regularity of the fundamental solution away from its pole.
\end{abstract}
\maketitle

\section{Introduction}\label{intro}
\subsection{} In this paper we study the fundamental solution associated with certain nonlinear operators perturbed by natural growth terms.  Consider, for $1<p<\infty$, the quasilinear operator 
\begin{equation}\label{defnl1}
\mathcal{L}(u) = \mathcal{L}^{(p)}(u) = -\Delta_p u - \sigma \abs{u}^{p-2}u,
\end{equation}
where  $\Delta_p u= \text{div} (\nabla u \abs{\nabla u}^{p-2})$ is the $p$-Laplacian operator and $\sigma$ is a nonnegative Borel measure, on $\mathds{R}^n$.

Our main goal is to investigate the interaction between the differential operator $-\Delta_p u$, and the lower order term $\sigma \abs{u}^{p-2}u$, under necessary conditions on $\sigma$.  This interaction between the differential operator and the lower order term turns out to be highly nontrivial.   We will also study the corresponding problem when the $p$-Laplacian is replaced by a more general quasilinear operator, or a fully nonlinear operator of Hessian type. 

Our theorems  extend to nonlinear operators very recent results \cite{FV1}, \cite{FNV}, 
\cite{GriHan1} regarding the behavior of the Green function of the time independent Schr\"{o}dinger operator 
 $-\Delta u - \sigma u$. Our approach, which combines some nonhomogeneous harmonic analysis, nonlinear 
potential theory and PDE methods, is based on a certain discrete ``pseudo-probabilistic'' model of equation (\ref{defnl1}), which employs a family of nonlinear expectation operators 
(see Section~\ref{reduction} below).

This method allows us to construct fundamental solutions of the operator $\mathcal{L}$ under assumptions on $\sigma$ so in general the Harnack inequality fails for positive solutions $u$ of $\mathcal{L}(u)=0$.  The Harnack inequality formed an essential part in classical arguments concerning the construction of fundamental solutions to both linear and nonlinear operators \cite{LSW63, Roy62, Ser1, Ser2}.  For example, our results hold for the Hardy potential $\sigma(x) = c|x|^{-p}$ for $0<c<((n-p)/p)^p$.

Now consider the equation:
\begin{equation}\label{fundsolnintro}
\mathcal{L}(u) = \delta_{x_0} \quad \text{in} \;\;\mathds{R}^n, \qquad \inf_{x\in \mathds{R}^n} u(x) = 0 ,
\end{equation}
where $\delta_{x_0}$ is the Dirac delta measure concentrated at $x_0$.  A solution $u(x,x_0)$ of (\ref{fundsolnintro}) understood in a suitable weak, or potential theoretic sense 
(e.g. renormalized, viscosity, or approximate solutions), is called a \textit{fundamental solution} of the operator $\mathcal{L}$, with pole at $x_0$.

It is well known (\cite{Ser2}, \cite{Ser1}, \cite{veron}) that, under stringent assumptions on $\sigma$, there exists a 
positive  constant $c$ so that 
 \begin{equation}\label{fundsolnequivineq} 
\frac{1}{c} \,  G(x,x_0)\le u(x,x_0) \le c \,  G(x,x_0),
\end{equation}
if $\abs{x-x_0} <R$ for some $R>0$, where $G(x,x_0)$ is the fundamental solution  of 
$\Delta_p$  
on $\mathds{R}^n$: 
\begin{equation}\label{unperturbed}
G(x,x_0) = \gamma_{p,n}\abs{\,x-x_0}^{\frac{p-n}{p-1}}, 
\quad \text{ when } 1<p<n.
\end{equation}  
 Here $\gamma_{p,n} =  \frac{p-1}{n-p} \bigl(n \omega_{n-1}\bigl)^{-\frac 1{p-1}}$ and $\omega_{n-1}$ is the surface area of the $n-1$ dimensional sphere in $\mathds{R}^n$. Moreover, 
 it was shown recently by L. Ver\'{o}n (see \cite{PT1}, Lemma 5.1) that $\lim_{x \to x_0} u(x,x_0)/G(x,x_0)=c$ if $\sigma \in L^{\infty}_{loc} 
(\mathds{R}^n)$. However, as we will  see below, $u(x,x_0)$  may behave very differently in comparison to $G(x, x_0)$, both locally and 
globally. 

In this paper we will obtain sharp global estimates for the behavior of fundamental solutions: 
  \textit{Suppose $1<p<n$. Then any fundamental solution $u(x,x_0)$ with pole at $x_0$ satisfies the following lower bound:}
\begin{equation}\begin{split}\label{l1lowest} u(x,x_0) \geq  c\abs{x-x_0}^{\frac{p-n}{p-1}}& \exp\Big( c\int_{0}^{\abs{x-x_0}}\Bigl(\frac{\sigma(B(x,r)}{r^{n-p}}\Bigl)^{1/(p-1)}\frac{dr}{r}\Bigl)\\
&\cdot \exp\Bigl(c\int_{0}^{\abs{x-x_0}} \frac{\sigma(B(x_0,r))}{r^{n-p}}\frac{dr}{r}\Bigl),
\end{split}\end{equation}
for any $x, x_0 \in \mathds{R}^n$ under necessary conditions on the measure $\sigma$.  Here $c$ is a positive constant depending on $n$ and $p$, and $B(x,r)$ is a ball of radius $r$ centered at $x$. 

 The sharpness of this lower bound is illustrated explicitly by our primary result:    \textit{Under a natural assumption on $\sigma$,  there exists a fundamental solution $u(x,x_0)$ of $\mathcal{L}$ satisfying the corresponding upper bound, i.e. for another positive constant $c$, depending on $n, p$ and $\sigma$, it holds that:}
\begin{equation}\begin{split}\label{l1upest}
u(x,x_0) \leq  c\abs{x-x_0}^{\frac{p-n}{p-1}}& \exp\Big( c\int_{0}^{\abs{x-x_0}}\Bigl(\frac{\sigma(B(x,r))}{r^{n-p}}\Bigl)^{1/(p-1)}\frac{dr}{r}\Bigl)\\
&\cdot \exp\Bigl(c\int_{0}^{\abs{x-x_0}} \frac{\sigma(B(x_0,r))}{r^{n-p}}\frac{dr}{r}\Bigl).
\end{split}\end{equation}
 See Theorems \ref{l1lowbd} and \ref{l1upbd} below for more precise statements.  Furthermore, it follows that there is a minimal fundamental solution which obeys (\ref{l1lowest}) and (\ref{l1upest});  see Corollary \ref{minbnds}.  These results had previously been announced without proofs in \cite{verb}. 
 
 In addition to the pointwise bounds presented above, the regularity of the constructed fundamental solution $u(x,x_0)$ away from the pole $x_0$ will be considered.  In particular it will be proved that $u( \cdot, x_0) \in W^{1,p}_{\text{loc}}(\mathds{R}^n\backslash\{x_0\})$, see Theorem \ref{fundreg}.  This is the optimal regularity that one can hope for under our assumption on $\sigma$, see Remark \ref{fundrem} below. 

\begin{rem} It is somewhat surprising that expressions involving both the linear potential
$  \displaystyle \mathbf{I}^{\rho}_p \sigma (x_0) = \int_{0}^{\rho} \frac{\sigma(B(x_0,r))}{r^{n-p}}\frac{dr}{r}$ of fractional order $p$, and 
the nonlinear Wolff's potential, introduced in \cite{HW},  $$\mathbf{W}_{1,p}^{\rho} \sigma  (x)= \int_{0}^{\rho}\left (\frac{\sigma(B(x,r))}{r^{n-p}}\right)^{1/(p-1)}\frac{dr}{r},$$ 
should appear, in the exponential form, in  global bounds of  solutions  
of the equation $-\Delta_p u - \sigma \abs{u}^{p-2}u = \delta_{x_0}$. 
\end{rem} 
We observe that local Wolff's potential estimates of solutions of the equation $-\Delta_p u = \sigma$ were established by Kilpel\"{a}inen and Maly in \cite{KM1}, while the fully nonlinear analogues for Hessian equations are due to Labutin  \cite{Lab1}.

  A simple corollary of our results (Corollary \ref{fundsolnequiv} below) gives necessary and sufficient conditions on $\sigma$ which ensure that $u(x,x_0)$  and $G(x,x_0)$ are pointwise comparable globally.  
  This requires the uniform boundedness of the Riesz potential 
  $\mathbf{I}_p \sigma$ when $1<p\le 2$ and the Wolff potential 
  $\mathbf{W}_{1,p} \sigma$ when $p>2$:  
  
  \textit{ Suppose there is a constant $c>0$ so that  (\ref{fundsolnequivineq}) holds for all $x, x_0\in  \mathds{R}^n$. 
  Then necessarily,}
\begin{align}\label{item1fund}
 \displaystyle  \sup_{x\in \mathds{R}^n} &  \int_{0}^{\infty} \frac{\sigma(B(x,r))}{r^{n-p}}\frac{dr}{r} < \infty  \quad  \rm{if} \quad 1<p \leq 2,  \\
\label{item2fund} \displaystyle  \sup_{x\in \mathds{R}^n} &  \int_{0}^{\infty}\left (\frac{\sigma(B(x,r))}{r^{n-p}}\right)^{1/(p-1)}\frac{dr}{r} < \infty \quad  \rm{if} \quad p > 2.
\end{align}
\textit{Conversely, 
 (\ref{item1fund})--(\ref{item2fund}) are sufficient for (\ref{fundsolnequivineq})  to hold for all $x, x_0\in  \mathds{R}^n$,  under a natural smallness assumption on $\sigma$ discussed below.}

 In a recent paper of Liskevich and Skrypnik \cite{LS1}, an indication 
 of this behavior involving the linear potential $\mathbf{I}_p(\sigma)$ when $1<p\le 2$ appeared for the first time. They studied isolated singularities of  operators of the type $\mathcal{L}u = -\Delta_p u - \sigma \abs{u}^{p-2}u$, under the assumption that  $\sigma$ is in the quasilinear Kato class (see, 
 e.g.,  \cite{Bir01}):
\begin{equation}\label{kato}
\lim_{\rho\to 0^+} \sup_{x \in  \mathds{R}^n} \int_{0}^{\rho} \left ( \frac{| \sigma | (B(x,r))}{r^{n-p}}\right )^{1/(p-1)}\frac{dr}{r}  =0. 
\end{equation}

In this paper we will assume that  $\sigma$ 
is a positive Borel measure satisfying the following capacity condition:
\begin{equation}\label{capintro}
\sigma(E) \leq C \, \rm{cap}_{p}(E) \;\text{ for any compact set } E\subset \mathds{R}^n,
\end{equation}
where $\text{cap}_p$ is the standard $p$-capacity:
\begin{equation}\label{pcapintro}
\text{cap}_{p}(E) = \inf\{ \;\norm{\nabla f}^p_{\text{L}^p} \;: \; f\geq 1 \; \text{on} \; E, \, \, f \in C^\infty_0(\mathds{R}^n)\;\}.\end{equation}
This intrinsic condition, which originated in the work of Maz'ya  in the 
context of linear problems  (see \cite{MazSob}), is  less stringent than the quasilinear  Kato condition (\ref{kato}).  However, when working in this generality, we cannot expect solutions to be continuous or satisfy a Harnack inequality.

It is easy to see that (\ref{capintro}) with constant $C=1$ is necessary 
in order that $u(\cdot, x_0)$ be finite a.e., which  is an immediate 
consequence of  the 
inequality
\begin{equation}\label{supersol}
\int_{\mathds{R}^n}  |h|^p \, d \sigma \le \int_{\mathds{R}^n} |\nabla h|^p \, dx, \qquad h \in C^\infty_0(\mathds{R}^n). 
\end{equation}
The preceding inequality holds whenever there exists a positive supersolution $u$ 
so that $-\Delta_p u \ge \sigma u^{p-1}$ (see Section~\ref{reduction}). We observe that, in its turn, 
 (\ref{capintro}) with $C = (p-1)^p/p^p$ yields (\ref{supersol}) 
 (see  \cite{MazSob}). 

\subsection{}  Recall that the fundamental solution of the Laplacian operator plays an important role in the theory of harmonic functions not only because of the principle of superposition, but also because of its importance in understanding how solutions near an isolated singularity can behave, see e.g. Theorem 1.3.7 of \cite{AG01}.  The latter theory carries over to the theory of the quasilinear and fully nonlinear operators considered here, and hence from the bounds for the fundamental solution we deduce a rather complete analysis of the behavior of solutions of $\mathcal{L}(u) = 0$, and the analogue for the $k$-Hessian operator, in the punctured space.  For the quasilinear operator, this has been considered under a variety of assumptions on $\sigma$ in \cite{LS1, NSS, Ser1, Ser2, veron}.  Isolated singularities of nonlinear operators have been studied recently in \cite{Labiso, Yanyaniso}.  We will present this application in a forthcoming note, where we will also consider other applications, for instance to the study of sign changing solutions of the equation:
 \begin{equation}\label{order1}-\Delta_p u = \abs{\nabla u}^p + \sigma, \end{equation}
see, for instance \cite{KK1, FM1, HMV, AHBV, MP1} for some of the existing literature regarding (\ref{order1}).

 \subsection{} The plan of the paper is as follows.  In 
 Section~\ref{main} we precisely state our main results regarding the fundamental solution of (\ref{defnl1}) and its fully nonlinear analogue. 
 
 In Section \ref{background}, we rapidly review some elements of the theory of nonlinear PDE from a potential theoretic perspective.  We are essentially interested in two aspects of this theory: potential estimates for solutions, and weak continuity of the elliptic operators.  In this section we also collect a few facts about capacities, and discuss minimal fundamental solutions.  After this, in Section \ref{reduction}, we discuss how the potential estimates reduce matters to the study of certain nonlinear integral inequalities.  In this section we also discuss the necessary capacity conditions on the measure $\sigma$ in order for positive solutions of the differential inequalities $\mathcal{L}u\geq 0$ or $\mathcal{G} u \geq 0$ to exist.

Section \ref{lowbds} is concerned with finding a lower bound for any positive solution of a certain nonlinear integral inequality.  This bound is proved by estimating successive iterations of the inequality by induction.  From this bound Theorems \ref{l1lowbd} and \ref{l2lowbd} are deduced, and their proofs conclude Section \ref{lowbds}.

In Section \ref{construct}, we consider the problem of constructing a positive solution to the integral inequality of Section \ref{lowbds}.  This construction forms the main technical step in the arguments asserting Theorems \ref{l1upbd} and \ref{l2upbd}, which we prove in Section \ref{existence}.   In this section we also discuss criteria for the fundamental solutions of $\mathcal{L}$ and $\mathcal{G}$ to be pointwise equivalent to the fundamental solutions of the unperturbed differential operators.

Finally, in Section \ref{regularity}, we consider the Sobolev regularity of the fundamental solution away from its pole.  This is the content of Theorem \ref{fundreg} below.

\section{Main results}\label{main}

  We need to introduce some notation before we can state our results.  The global bounds will involve two local potentials, a nonlinear Wolff potential, and a linear Riesz potential.  If $s>1, \alpha>0$ with $0 < \alpha s < n$, we define the local Wolff potential of a measure $\sigma$, for $\rho>0$, by:
\begin{equation}\label{localwolff}
\mathbf{W}_{\alpha,s}^{\rho} \sigma (x) = \int_0^{\rho} \left (\frac{\sigma (B(x,r))}{r^{n-\alpha s}}\right )^{1/(s-1)}\frac{dr}{r}. 
\end{equation}
For $0<\alpha<n$ the local Riesz potential of $\sigma$ is defined by:
\begin{equation}\label{localriesz}
\mathbf{I}_{\alpha}^{\rho} \sigma (x) = 
\int_0^{\rho} \frac{\sigma (B(x,r))}{r^{n-\alpha}}\frac{dr}{r} .
\end{equation}

We make the convention that when $\rho=+\infty$, then we write $\textbf{W}_{\alpha, s}\sigma$ and $\textbf{I}_{\alpha} \sigma $ for $\textbf{W}_{\alpha, s}^{\infty} \sigma $ and $\textbf{I}_{\alpha}^{\infty} \sigma$ respectively. In particular,
\begin{equation}\label{globalriesz}
\mathbf{I}_{\alpha} \sigma (x)  =\int_0^{+\infty} \frac{\sigma (B(x,r))}{r^{n-\alpha}}\frac{dr}{r}=(n-\alpha)^{-1} \int_{\mathds{R}^n} \frac{d \sigma(y)}{\abs{x-y}^{n-\alpha}} . 
\end{equation}
When $d \sigma= f(x) \, dx$ where $f \in L^1_{loc}(dx)$, we will denote the corresponding 
potentials by 
 $\textbf{W}_{\alpha, s} f$ and $\mathbf{I}_{\alpha} f$ respectively.

 \subsection{} Let us first state our main result for the quasilinear operator $\mathcal{L}$ defined by (\ref{defnl1}).  We choose to work with solutions in the potential theoretic sense, see Section \ref{background} below.  The reader should note that these solutions are by definition lower semicontinuous, and hence defined everywhere, and so it makes sense to talk about pointwise bounds.  We could have alternatively worked with solutions in the 
\textit{renormalized sense}, see \cite{DMMOP99} for a thorough introduction.

\begin{defn}\label{l1fund} A fundamental solution (with pole at $x_0$) of the operator $\mathcal{L}$ defined by (\ref{defnl1}), is a positive $p$-superharmonic function $u(\,\cdot\,, x_0)$, such that $u \in L^{p-1}_{\rm{loc}}(\sigma)$, satisfying equation (\ref{fundsolnintro}).  The equality in (\ref{fundsolnintro}) is understood in the $p$-superharmonic sense.  See Section \ref{background} below for more details.
\end{defn}

When we write \textit{$u(x,x_0)$ is a fundamental solution of $\mathcal{L}$}, with no mention of the pole, we tacitly assume that it has pole at $x_0$.

The first theorem concerns the lower bound for fundamental solutions.  Throughout this paper, unless stated otherwise, we will make the assumption that the measure $\sigma$ is not identically $0$.

\begin{thm}\label{l1lowbd} 
a)  Let $1<p<n$, $x_0 \in \mathds{R}^n$, and suppose $u(\, \cdot\,, x_0)$ is a fundamental solution of $\mathcal{L}$ with pole at $x_0$. Then (\ref{capintro}) holds with $C=1$. In addition, there is a constant $c>0$, depending on $n, p$  such that the bound (\ref{l1lowest}) holds.  In other words, for all $x\in \mathds{R}^n$
$$ u(x, x_0) \geq c \abs{x-x_0}^{\frac{p-n}{p-1}}\exp\Bigl( c\mathbf{W}_{1,p}^{\abs{x-x_0}}(\sigma)(x)+c \mathbf{I}_{p}^{\abs{x-x_0}}(\sigma) (x_0)\Bigl).
$$
b)  If $p\geq n$, and $u$ is a  nonnegative $p$-superharmonic function satisfying the differential inequality:
$$\mathcal{L}u \geq 0, \;\; \text{in}\,\, \mathds{R}^n
$$
then $u \equiv 0$.
\end{thm}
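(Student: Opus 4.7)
The plan for part (a) is to combine a Wolff potential lower bound for $p$-superharmonic functions with a bootstrap iteration that manufactures the exponential factors. First, since $u = u(\cdot, x_0)$ is positive and $p$-superharmonic with $-\Delta_p u = \sigma u^{p-1} + \delta_{x_0}$ in the $p$-superharmonic sense, we have $-\Delta_p u \geq \sigma u^{p-1}$. The supersolution-to-Hardy inequality (\ref{supersol}) recalled in the introduction then gives $\int \abs{h}^p \, d\sigma \leq \int \abs{\nabla h}^p \, dx$ for all $h \in C^\infty_0(\mathds{R}^n)$, and taking the infimum over admissible $h$ for a compact set $E$ yields (\ref{capintro}) with $C = 1$.

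For the lower bound itself, set $\mu := \sigma u^{p-1} + \delta_{x_0}$. The global Kilpel\"{a}inen--Mal\'{y} estimate for $p$-superharmonic functions with $\inf u = 0$, to be reviewed in Section \ref{background}, gives a constant $c = c(n,p) > 0$ with
$$ u(x, x_0) \geq c\, \mathbf{W}_{1,p}^{\infty}\mu(x) \geq c\, \mathbf{W}_{1,p}^{\infty}(\sigma u^{p-1})(x) + c\abs{x-x_0}^{\frac{p-n}{p-1}}, $$
the last piece coming from $\mathbf{W}_{1,p}^{\infty}(\delta_{x_0})(x) \asymp \abs{x-x_0}^{(p-n)/(p-1)}$, valid since $1<p<n$. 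Writing $G_0(x) := c\abs{x-x_0}^{(p-n)/(p-1)}$ and $\mathbf{T}v(x) := c\, \mathbf{W}_{1,p}^{\infty}(\sigma v^{p-1})(x)$, this reads $u \geq G_0 + \mathbf{T}u$. I would then prove by induction (the main task of Section \ref{lowbds}) that for every $N \geq 1$,
$$ u(x, x_0) \geq G_0(x)\, P_N\!\Bigl(\mathbf{W}_{1,p}^{\abs{x-x_0}}\sigma(x) + \mathbf{I}_p^{\abs{x-x_0}}\sigma(x_0)\Bigr), $$
where the coefficients of the polynomial $P_N$ increase to those of a multiple of $\exp$; letting $N \to \infty$ yields the claimed exponential bound. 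The linear Riesz potential $\mathbf{I}_p^{\abs{x-x_0}}\sigma(x_0)$ enters because substituting $G_0(y)^{p-1} \asymp \abs{y-x_0}^{p-n}$ into one application of $\mathbf{T}$ linearizes the $(s-1)$-root in the Wolff integral when integration is concentrated near $x_0$, and this Riesz-type contribution is reproduced at each iteration centered at $x_0$; the Wolff factor $\mathbf{W}_{1,p}^{\abs{x-x_0}}\sigma(x)$ arises from the complementary self-interaction contributions centered at the field point $x$.

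The main technical obstacle is precisely this bookkeeping: because $\mathbf{W}_{1,p}$ is nonlinear in its density (unlike the Riesz potential that governs the $p = 2$ analogue), the usual Neumann-series expansion has to be replaced with a careful inductive argument exploiting pointwise quasi-additivity of the Wolff potential over nested annuli around $x$ and $x_0$. The correct inductive statement must keep both centers visible simultaneously and balance the two distinct singular behaviors---at the pole $x_0$ registered by the linear Riesz potential, and at the field point $x$ registered by the nonlinear Wolff potential---at every stage.

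For part (b), when $p \geq n$, by the strong minimum principle for $p$-superharmonic functions, either $u$ vanishes at some point---in which case $u \equiv 0$ already---or $u > 0$ on $\mathds{R}^n$. In the latter case the classical Liouville theorem for positive $p$-superharmonic functions on $\mathds{R}^n$, reflecting the absence of a globally decaying Green function of $-\Delta_p$ when $p \geq n$, forces $u \equiv c$ for some $c > 0$. The inequality $\mathcal{L}u \geq 0$ then reduces to $0 \geq \sigma c^{p-1}$, which is incompatible with $\sigma \not\equiv 0$. Thus $u \equiv 0$ in all cases.
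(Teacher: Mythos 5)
Your overall architecture is the right one, and your derivation of the capacity condition with $C=1$ via the embedding inequality (\ref{supersol}) is exactly what the paper does in Lemma \ref{plapcap}. However, your plan for extracting the exponential lower bound misses the decisive simplification. You propose a single inductive statement $u \geq G_0\, P_N\bigl(\mathbf{W}_{1,p}^{|x-x_0|}\sigma(x) + \mathbf{I}_p^{|x-x_0|}\sigma(x_0)\bigr)$ that keeps both centers visible simultaneously. That is genuinely hard to run: the cross-terms between the Wolff contribution at $x$ and the Riesz contribution at $x_0$ do not combine cleanly under repeated application of $\mathbf{W}_{1,p}(\cdot\,d\sigma)$. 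The paper avoids this entirely by proving \emph{two separate} exponential lower bounds (Lemmas \ref{greenlowbd1} and \ref{greenlowbd2}): one in which every iterate of $\mathcal{N}$ is estimated from below by restricting to large radii $r > 4|x-x_0|$ so that only the mass concentrated near $x_0$ in dyadic shells $B_{k+1}\setminus B_k$ survives and the $(p-1)$-root linearizes (yielding $\exp(c\,\mathbf{I}_p^{|x-x_0|}\sigma(x_0))$, with the $1/m!$ factor produced by the summation-by-parts inequality (\ref{sumparts1})), and a second in which one restricts to small radii $r < |x-x_0|/2$ and uses the monotonicity $\omega(B(x,u/2)) \le \omega(B(y,u))$ to telescope a nested integral at the field point $x$ (yielding $\exp(c\,\mathbf{W}_{1,p}^{|x-x_0|}\sigma(x))$). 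The claimed bound is then the geometric mean of the two via $\tfrac{a}{2}+\tfrac{b}{2}\geq\sqrt{ab}$. There is also a separate issue your proposal glosses over: for $p > 2$ the operator $f \mapsto \mathbf{W}_{1,p}(f^{p-1}d\sigma)$ is not superadditive, so one must pass to $\mathcal{T}(f)=(\mathbf{W}_{1,p}(f\,d\sigma))^{p-1}$, apply the Neumann-type iteration to $u^{p-1}$, and then absorb a polynomial correction factor via Jensen--H\"older, as set out in Lemma \ref{lowbdsum}.

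For part (b) your deduction ``strong minimum principle, Liouville for positive $p$-superharmonic functions, then $\sigma c^{p-1}\le 0$'' is logically sound, but the middle step is precisely the nontrivial content of the claim: the Liouville property for nonnegative $p$-superharmonic functions on $\mathds{R}^n$ with $p\ge n$ is not something the paper treats as classical. The paper instead points to Serrin--Zou \cite{SZ1} (Theorem II') for $p>n$ and derives the $p=n$ case from the local Riesz-measure estimate of \cite{KM}, Lemma 3.5. Invoking an unattributed Liouville theorem here would need either a reference or a proof; as a sanity check, for $1<p<n$ such a statement is false (witness $G(\cdot,x_0)$), so the restriction $p\ge n$ is doing real work and deserves an explicit argument.
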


\begin{rem} Part b) of Theorem \ref{l1lowbd} is a Liouville theorem, and when $p>n$ it is related to the important recent works of Serrin and Zou (see \cite{SZ1}, Theorem II'), and Bidaut-V\'{e}ron and Pohozaev \cite{BVP1}.  When $p=n$ the result is a straightforward consequence of well known local estimates of the Riesz measure of a $p$-superharmonic function, for instance one may use Lemma 3.5 in \cite{KM}.  For several special cases the result follows from those in \cite{BVP1}.\end{rem}

\begin{rem}  As we shall see below (in Lemma \ref{plapcap}), the condition (\ref{capintro}) is in fact necessary for the existence of a positive $p$-superharmonic function satisfying the inequality $\mathcal{L} u \geq 0$ in the $p$-superharmonic sense.
\end{rem}

In the case when $1<p\leq n$, it is a nontrivial fact that when $\sigma \equiv 0$ that the fundamental solution is in fact unique; this was proved in \cite{KicVer1}.  An alternative method is outlined in \cite{TWHess3}, where uniqueness of the fundamental solution to the fully nonlinear $k$-Hessian operators when $1\leq k \leq n/2$ is treated.  However, when $\sigma$ is not trivial, it is known even in the linear case ($p=2$, or $k=1$) that solutions of $\mathcal{L}$ are not necessarily unique for a general measure $\sigma$  (see \cite{Mur86}).  It is therefore desirable to single out a distinguished class of fundamental solutions.  We are interested in fundamental solutions of $\mathcal{L}$ which behave like the lower bound (\ref{l1lowest}).  The existence of such fundamental solutions, called \textit{minimal fundamental solutions}, is the content of the next theorem.

\begin{thm}\label{l1upbd}  Let $1<p<n$, $x_0 \in \mathds{R}^n$ and suppose $\sigma$ is a nonnegative Borel measure so that (\ref{capintro}) holds.  There is a constant $C_0 = C_0(n,p)>0$ such that if (\ref{capintro}) holds with constant $C<C_0$, then there exists a fundamental solution $u(\,\cdot, x_0)$ of $\mathcal{L}$ with pole at $x_0$, together with a constant $c=c(n,p,C)>0$, so that the upper bound (\ref{l1upest}) holds for all $x\in \mathds{R}^n$, i.e.
$$ u(x, x_0) \leq c \abs{x-x_0}^{\frac{p-n}{p-1}}\exp\Bigl( c\mathbf{W}_{1,p}^{\abs{x-x_0}}(\sigma)(x)+c \mathbf{I}_{p}^{\abs{x-x_0}}(\sigma) (x_0)\Bigl).
$$
\end{thm}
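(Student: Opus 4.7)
The plan is to reduce the construction of the fundamental solution to solving a nonlinear integral inequality, then realize this inequality's solution as a $p$-superharmonic fundamental solution via approximation. First, by the Kilpelainen-Maly pointwise potential estimate (reviewed in Section \ref{background}), a $p$-superharmonic function $u$ with Riesz measure $\mu = \sigma u^{p-1} + \delta_{x_0}$ satisfying $\inf u = 0$ is sandwiched, up to constants depending on $n,p$, by its Wolff potential:
\[
 c_1 \mathbf{W}_{1,p}\bigl(\sigma u^{p-1}+\delta_{x_0}\bigr)(x) \le u(x) \le c_2 \mathbf{W}_{1,p}\bigl(\sigma u^{p-1}+\delta_{x_0}\bigr)(x).
\]
Since $\mathbf{W}_{1,p}\delta_{x_0}(x) \simeq |x-x_0|^{(p-n)/(p-1)} = \gamma_{p,n}^{-1} G(x,x_0)$, the problem reduces to constructing a positive measurable $v$ which satisfies the integral inequality
\[
 v(x) \le C_1\mathbf{W}_{1,p}(\sigma v^{p-1})(x) + C_1 G(x,x_0)
\]
subject to the desired upper bound (\ref{l1upest}).

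The construction of such a $v$ is the main technical content and is carried out in Section \ref{construct}; here one would iterate by setting $v_0 = \lambda G(\,\cdot\,,x_0)$ for $\lambda$ large and $v_{k+1} = C_1\mathbf{W}_{1,p}(\sigma v_k^{p-1}) + C_1 G(\,\cdot\,,x_0)$. The crux is to prove by induction that each $v_k$ is controlled by the right-hand side of (\ref{l1upest}); this requires carefully estimating the Wolff potential of $\sigma$ multiplied by the exponential expressions in (\ref{l1upest}), for which the smallness hypothesis (\ref{capintro}) with constant $C<C_0$ is used to ensure that each iteration only increases the size by a factor controlled geometrically. The discrete dyadic/Carleson-measure framework indicated in Section \ref{reduction} is the natural vehicle for this bookkeeping, because it converts the continuous Wolff potential into a discrete nonlinear operator for which sharp bounds on the nonlinear action of $\sigma$ against exponential weights are available.

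Once $v$ is in hand satisfying both the integral inequality and the upper bound, a fundamental solution is produced by approximation. Truncate $\sigma$ by $\sigma_j = \sigma \cdot \mathbf{1}_{B(x_0,j)}$ (and if needed mollify), so that the bounds on $v$ survive uniformly in $j$. For each $\sigma_j$ the existence of a classical or renormalized $p$-superharmonic solution $u_j$ of $-\Delta_p u_j = \sigma_j u_j^{p-1} + \delta_{x_0}$ with $\inf u_j = 0$ is standard (by Schauder-type fixed point or monotone iteration, guided by the bound on $v$). Passing to the limit via the weak continuity of the $p$-Laplacian (reviewed in Section \ref{background}) on bounded sets of $p$-superharmonic functions produces a $p$-superharmonic $u(\,\cdot\,,x_0)$ satisfying (\ref{fundsolnintro}). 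Lower semicontinuity and the uniform bound on the $u_j$ give the pointwise upper bound (\ref{l1upest}), while $u \in L^{p-1}_{\mathrm{loc}}(\sigma)$ follows from the same bound together with (\ref{capintro}).

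The hard step is the construction of $v$: controlling $\mathbf{W}_{1,p}(\sigma v^{p-1})$ when $v$ itself has the exponential Wolff-potential growth on the right-hand side of (\ref{l1upest}). Naive estimation loses the exponential structure, and one instead exploits that the exponent $1/(p-1)$ inside $\mathbf{W}_{1,p}$ nonlinearly amplifies small quantities; the smallness hypothesis $C<C_0$ on $\sigma$'s capacity constant ultimately translates (via Maz'ya-type trace inequalities and the equivalence between $p$-capacity and nonlinear Wolff energies) into a contraction-type estimate that closes the induction. Managing this self-referential exponential growth is precisely what the discrete pseudo-probabilistic reformulation of the problem is designed for.
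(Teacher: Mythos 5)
Your high-level plan matches the paper's: reduce to a nonlinear Wolff-potential integral inequality via the Kilpel\"{a}inen--Mal\'{y} estimate, build a supersolution $v$ of that inequality which is bounded by the right-hand side of (\ref{l1upest}), and then produce a genuine $p$-superharmonic fundamental solution by a monotone iteration together with the stability/weak-continuity facts from Section~\ref{background}. However, there are two substantive issues.

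First, the integral inequality you state has the wrong direction. What the iteration requires is a \emph{supersolution}: a function $v$ with
\begin{equation*}
v(x) \;\ge\; C_0\,\mathbf{W}_{1,p}\bigl(\sigma v^{p-1}\bigr)(x) + \tilde{K}\,\abs{x-x_0}^{\frac{p-n}{p-1}},
\end{equation*}
not the reversed inequality $v \le C_1 \mathbf{W}_{1,p}(\sigma v^{p-1}) + C_1 G$ you wrote. The $\ge$ direction is what lets you propagate $u_m \le v \Rightarrow u_{m+1} \le v$ through the Wolff potential upper bound for the approximating solutions, which is the entire purpose of having $v$. With your $\le$ inequality the induction does not close.

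Second, and more importantly, you treat the construction of $v$ as a routine monotone iteration $v_{k+1} = C_1\mathbf{W}_{1,p}(\sigma v_k^{p-1}) + C_1 G$ and assert that the crux is to bound each $v_k$ by the right-hand side of (\ref{l1upest}) ``by induction.'' This is precisely where the real work lies, and your sketch does not give an argument that closes. The paper avoids the iterative build-up entirely: it writes down the explicit candidate
\begin{equation*}
v(x) = \abs{x-x_0}^{\frac{p-n}{p-1}} \exp\Bigl(\beta \sum_{\ell\le j_x} 2^{\ell(p-n)}\sigma(B(x_0,2^{\ell+1}))\Bigr)\exp\Bigl(\beta\,\mathbf{W}_{1,p}^{\abs{x-x_0}}\sigma(x)\Bigr),
\end{equation*}
and verifies $v \ge C_0\mathbf{W}_{1,p}(\sigma v^{p-1}) + \tilde K G$ directly, by splitting the Wolff integral into a near part $I$ (scales $r < \abs{x-x_0}/2$) and a far part $II$. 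Closing the estimate on $I$ hinges on two specific facts you do not invoke: the Wolff-tail almost-constancy Lemma~\ref{lemtailest}, which lets one replace $\mathbf{W}_{1,p}^{\abs{y-x_0}}\sigma(y)$ by the analogous quantity centered at $x$ plus a bounded error, and the exponential integrability estimate of Corollary~\ref{corexpest} (derived via the discrete Carleson-measure lemma), which controls $\int_{B} e^{(p-1)\beta \mathbf{W}_{1,p}(\chi_B d\sigma)}d\sigma$. These, together with an integration by parts, give $I\lesssim v$; the far part $II$ is handled by the summation-by-parts inequality (\ref{sumparts2}). Your appeal to ``the discrete pseudo-probabilistic reformulation'' gestures in the right direction, but without Lemma~\ref{lemtailest} there is no way to decouple the $y$-dependence inside the exponential from the outer integration, and without the exponential integrability there is no way to sum. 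A generic induction on $v_k$ would face exactly the same obstruction.

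Two smaller points: the paper's iteration solves, for each $m$, the \emph{decoupled} problem $-\Delta_p u_m = \epsilon\sigma u_{m-1}^{p-1} + \delta_{x_0}$ (known right-hand side measure, Kilpel\"{a}inen's existence theorem plus a $j\to\infty$ exhaustion), rather than the implicit equation $-\Delta_p u_j = \sigma_j u_j^{p-1}+\delta_{x_0}$ you propose; and the presence of the small parameter $\epsilon$ means the construction actually yields the fundamental solution for $\tilde\sigma = \epsilon\sigma$, which is reconciled with the statement by absorbing $\epsilon$ into the smallness hypothesis $C<C_0$. A Schauder-type fixed-point argument for the implicit problem on $\mathds{R}^n$ would face serious compactness issues and is not what the paper does.
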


\begin{rem}As a corollary of Proposition \ref{minfund} - which states that \textit{whenever there exists a fundamental solution of $\mathcal{L}$ with pole at $x_0$, then there exists a unique minimal fundamental solution of $\mathcal{L}$ with pole at $x_0$} - we assert the existence of a unique minimal fundamental solution of (\ref{defnl1}) obeying the bounds (\ref{l1lowest}) and (\ref{l1upest}).  See Corollary \ref{minbnds} below.\end{rem}

When $p=2$, the $p$-Laplacian reduces to the Laplacian operator and Theorems \ref{l1lowbd} and \ref{l1upbd} are contained in some very recent work of M. Frazier and the second author \cite{FV1}.  In fact when $p=2$ the lower bound, Theorem \ref{l1lowbd}, has been known for some time, under various restrictions on $\sigma$ (see \cite{GriHan1}).  The corresponding upper bound seems to be much deeper.  In \cite{FV1}, \cite{FNV} such bounds for the Green function of Schr\"{o}dinger type equations with the fractional Laplacian operator are discussed.  

\begin{rem}  From our method it is clear that Theorems \ref{l1lowbd} and \ref{l1upbd} continue to hold if we replace the $p$-Laplacian operator by the general quasilinear $\mathcal{A}$-Laplacian operator $\rm{div} \, \mathcal{A}(x, \nabla u)$ (see, e.g.,  \cite{HKM}, and Section \ref{background} below).  The constants appearing in the theorems will then in addition depend on the structural constants of  $\mathcal{A}$.
\end{rem}

Having constructed a fundamental solution, we now turn to considering how regular it is away from the pole $x_0$.   This is the content of the next theorem.

\begin{thm}\label{fundreg}  Suppose the hypothesis of Theorem \ref{l1upbd} are satisfied, and that $u(x,x_0) \not\equiv \infty$, with $u(x,x_0)$ the fundamental solution constructed in Theorem \ref{l1upbd}.  Then, there exists $C_0=C_0(n,p)>0$ so that if (\ref{capintro}) holds with $C<C_0$, then:
$$u(\,\cdot, x_0)\in W^{1,p}_{\text{loc}}(\mathds{R}^n\backslash\{x_0\}).
$$
\end{thm}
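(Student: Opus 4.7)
The strategy is a localized Caccioppoli-type energy estimate on compact subsets of $\mathds{R}^n \setminus \{x_0\}$, combined with Maz'ya's inequality \eqref{supersol}, exploiting the smallness of the constant in \eqref{capintro} to absorb the natural-growth term.

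Fix a compact $K \subset \mathds{R}^n \setminus \{x_0\}$ and a cutoff $\eta \in C_c^\infty(\mathds{R}^n \setminus \{x_0\})$ with $0 \le \eta \le 1$ and $\eta \equiv 1$ on $K$. Away from $x_0$, $u$ is $p$-superharmonic with Riesz measure $u^{p-1}\,d\sigma$. The first step is to test the equation $-\Delta_p u = u^{p-1} d\sigma$ against $\varphi = \eta^p u$; combined with Young's inequality on the cross term, this formally produces the Caccioppoli-type estimate
\[
\int \eta^p |\nabla u|^p \, dx \leq C_1 \int u^p |\nabla \eta|^p \, dx + 2 \int (\eta u)^p \, d\sigma .
\]
To make this rigorous I would replace $u$ by its truncations $u_j = \min(u,j)$, which lie in $W^{1,p}_{loc}(\mathds{R}^n)$ as bounded $p$-superharmonic functions. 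Testing at level $j$ with $\eta^p u_j$ is legitimate, and the Riesz measure of $u_j$ agrees with $u^{p-1}\sigma$ on $\{u < j\}$, the additional contribution on $\{u = j\}$ being handled by a Kilpel\"ainen--Mal\'y-type limiting procedure. Monotone convergence on the left and Fatou on the right then allow one to pass $j \to \infty$.

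Next, Maz'ya's inequality \eqref{supersol}, which under \eqref{capintro} reads $\int h^p d\sigma \leq c(n,p)\,C\int |\nabla h|^p dx$, applied to $h = \eta u_j$ and combined with $|\nabla(\eta u_j)|^p \leq 2^{p-1}(\eta^p|\nabla u_j|^p + u_j^p|\nabla \eta|^p)$, yields
\[
\int (\eta u_j)^p \, d\sigma \leq c\,C\,2^{p-1}\Bigl(\int \eta^p |\nabla u_j|^p\, dx + \int u_j^p |\nabla \eta|^p\, dx\Bigr).
\]
Choosing $C_0 = C_0(n,p)$ so small that $4\, c\, C_0\, 2^{p-1} < 1$, the $\int\eta^p |\nabla u_j|^p$ term on the right can be absorbed into the left of the Caccioppoli inequality applied to $u_j$; letting $j \to \infty$ gives $\int_K |\nabla u|^p\,dx \leq C_3 \int u^p |\nabla \eta|^p\, dx$.

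It remains to show that the right-hand side is finite. By the upper bound \eqref{l1upest} of Theorem \ref{l1upbd} and compactness of $\mathrm{supp}(\nabla \eta)$ in $\mathds{R}^n \setminus \{x_0\}$, $u$ is dominated there by a constant times $\exp\bigl(cp \, \mathbf{W}_{1,p}^R \sigma(x)\bigr)$ for some finite $R$, the factors $|x-x_0|^{(p-n)/(p-1)}$ and $\exp(cp\,\mathbf{I}_p^R\sigma(x_0))$ contributing only bounded constants. \emph{The main obstacle} is then a Trudinger--Moser / John--Nirenberg-type exponential integrability estimate, namely $\int_{\mathrm{supp}(\nabla \eta)} \exp\bigl(cp\,\mathbf{W}_{1,p}^R\sigma\bigr)\,dx < \infty$, which one expects to hold for Wolff potentials precisely when the constant in \eqref{capintro} is sufficiently small (with $C_0$ possibly shrunk further for this reason). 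Once this local exponential integrability is in hand the argument is complete.
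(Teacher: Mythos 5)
Your overall strategy — a Caccioppoli estimate on compacts away from $x_0$, absorption of the natural growth term via Maz'ya's inequality under a smallness assumption, and reduction to exponential integrability of the Wolff potential — is close in spirit to the paper's, and you have correctly identified the central obstacle (the need for $\int_K \exp\bigl(cp\,\mathbf{W}_{1,p}^R\sigma\bigr)\,dx < \infty$, which the paper proves in Lemma~\ref{lebexpest} via Mingione's pointwise BMO estimate combined with John--Nirenberg, rather than leaving it as an expectation). However, there is a genuine gap in the first step. You propose to test the equation satisfied by the truncations $u_j = T_j(u)$ against $\eta^p u_j$. The Riesz measure $\mu[u_j]$ of the truncation does \emph{not} equal $u^{p-1}d\sigma$; it decomposes as $\mu[u_j] = \chi_{\{u<j\}}u^{p-1}d\sigma + \omega_j$ with $\omega_j\geq 0$ concentrated on $\{u=j\}$ (this is precisely the decomposition exploited in Lemma~\ref{plapcap}). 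The right-hand side of the Caccioppoli identity then contains the term $j\int \eta^p d\omega_j$, and there is no a priori control of this quantity uniformly in $j$: the total variation of $\mu[u_j]$ stays bounded, but the factor $j$ can destroy the estimate unless one already knows $u\in W^{1,p}_{\text{loc}}$, which makes the argument circular. For the unperturbed fundamental solution the level sets $\{u_0=j\}$ escape $\text{supp}(\eta)$ for large $j$, but for the perturbed equation nothing forces this. There is no off-the-shelf ``Kilpel\"ainen--Mal\'y-type limiting procedure'' that removes this term in the direction you need.

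The paper's proof circumvents this by not estimating the fundamental solution $u$ directly. Instead, it revisits the iterative construction of Theorem~\ref{l1upbd}: it builds solutions $w_m^j\in W^{1,p}_0(B(x_0,2^j))$ of the $p$-Laplacian with \emph{truncated}, compactly supported data in $W^{-1,p'}$ (so no extra mass appears when testing), establishes the pointwise bound $w_m^j \leq v$ via the global potential estimate, derives a uniform gradient estimate by testing with $\phi^p w_m^j$, and then passes to the limit in $j$ and $m$ using Theorems~\ref{limsuper} and~\ref{weakcont} together with weak compactness in $W^{1,p}$. This requires proving both $v\in L^p_{\text{loc}}(\mathds{R}^n\setminus\{x_0\},dx)$ (Lemma~\ref{lplocest}) and $v\in L^p_{\text{loc}}(\mathds{R}^n\setminus\{x_0\},d\sigma)$ (Lemma~\ref{lpsigmaest}); your Maz'ya-absorption device would in principle let you avoid the latter, which is a genuine simplification if the truncation gap could be closed, but as written the argument does not close.
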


\begin{rem} \label{fundrem}The local Sobolev regularity $W^{1,p}_{\text{loc}}(\mathds{R}^n\backslash \{x_0\})$ is optimal for solutions of $\mathcal{L}(u)=0$ under the assumption (\ref{capintro}) on $\sigma$, see \cite{JMV10}.  Theorem \ref{fundreg} seems to be new in the linear case $p=2$. In this case the proof, given in Section \ref{regularity}, can clearly be easily adapted to deduce the local regularity of the minimal Green's function of the Schr\"{o}dinger operator in a bounded domain $\Omega$, as was constructed recently in \cite{FV1, FNV}.
\end{rem}

\subsection{} We now move onto a fully nonlinear analogue of Theorems \ref{l1lowbd} and \ref{l1upbd}.  Let $1\leq k \leq n$ be an integer. Then the second operator we consider, denoted by $\mathcal{G}$, is the fully nonlinear operator defined by:
\begin{equation}\label{defn2}
\mathcal{G}(u) = F_k(-u) - \sigma \abs{u}^{k-1}u.
\end{equation}
Here $\sigma$ is again a nonnegative Borel measure, and $F_k$ is the $k$-Hessian operator, introduced by Caffarelli, Nirenberg and Spruck \cite{CNS1}, and defined for smooth functions $u$ by:
$$F_k(u) = \sum_{1\leq i_1 < \cdots < i_k \leq n} \lambda_{i_1}\dots \lambda_{i_k}$$
with $\lambda_1, \dots \lambda_n$ denoting the eigenvalues of the Hessian matrix $D^2u$.  We will use the notion of $k$-convex  functions, introduced by Trudinger and Wang \cite{TW2}, to state our results.  See Section \ref{background} for a brief discussion and definitions.

\begin{defn}\label{l2fund}  A fundamental solution (with pole at $x_0$) $u( \, \cdot\,,x_0)$ of $\mathcal{G}$ is a function such that $-u(\, \cdot, x_0)$ is a $k$-convex function so that $u(\, \cdot, x_0) \in L^k_{\text{loc}}(\sigma)$ satisfying  $\displaystyle \mathcal{G} u (\, \cdot, x_0) = \delta_{x_0}$ in the viscosity sense, and $\displaystyle \inf_{x\in \mathds{R}^n}u(x, x_0) = 0$.
\end{defn}

The necessary condition on $\sigma$ is now considered in terms of the $k$-Hessian capacity, introduced in \cite{TW1};
\begin{equation}\label{Hesscap}
\text{cap}_k (E) = \sup \{\;\; \mu_k[u](E) \; : \; u \text{ is } k\text{-convex in } \mathds{R}^n, \; -1<u<0 \;\; \},
\end{equation}
for a compact set $E$.  Here $\mu_k[u]$ is the $k$-Hessian measure of $u$; see Theorem \ref{Hessweakcont} below.
\begin{thm}\label{l2lowbd}
a)  Let $1\leq k < n/2$, and let $x_0 \in \mathds{R}^n$.  If $u(\, \cdot, x_0)$ is a fundamental solution of $\mathcal{G}$, then there is a constant $C>0$, $C=C(n,k)$, such that 
\begin{equation}\label{l2capest}\sigma(E) \leq C \, \rm{cap}_k(E) \;\; \text{ for all compact sets } E\subset \mathds{R}^n.
\end{equation}
In addition, there is a constant $c>0$, $c=c(n,k, C)$, such that 
\begin{equation}\label{l2lowest}\begin{split}
u(x, x_0) \geq c\abs{x-x_0}^{2-\frac{n}{k}}& \exp\Big( c\int_{0}^{\abs{x-x_0}}\Bigl(\frac{\sigma(B(x,r)}{r^{n-2k}}\Bigl)^{1/k}\frac{dr}{r}\Bigl)\\
&\cdot \exp\Bigl(c\int_{0}^{\abs{x-x_0}} \frac{\sigma(B(x_0,r))}{r^{n-2k}}\frac{dr}{r}\Bigl). 
\end{split}\end{equation}
b)  Let $k \geq n/2$. Then if $u$ is a nonnegative function so that $-u$ is a $k$-convex function satisfying the inequality:
$$\mathcal{G}(u) \geq 0 \quad \text{in} \;\; \mathds{R}^n
$$
then $u\equiv 0$.
\end{thm}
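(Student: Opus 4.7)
My plan is to follow the scheme of Theorem \ref{l1lowbd}, substituting the $k$-Hessian apparatus for the $p$-Laplacian apparatus at each step. The two workhorses are Labutin's local Wolff potential lower bound for solutions of $F_k(-u)=\mu$ \cite{Lab1}, which plays the role of the Kilpel\"ainen--Mal\'y estimate in the quasilinear case, and the abstract induction lemma for nonlinear integral inequalities proved in Section \ref{lowbds}, which will be stated in sufficient generality to cover both settings simultaneously.

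For part a), the first step is the capacity bound (\ref{l2capest}). Since $-u(\,\cdot\,,x_0)$ is $k$-convex and its $k$-Hessian measure satisfies $\mu_k[-u] \geq \sigma\, u^{k}$ off the pole, a test-function argument against the variational definition (\ref{Hesscap}) yields the Sobolev-type inequality associated to $\mathrm{cap}_k$, and hence (\ref{l2capest}). This parallels the derivation of (\ref{supersol}) described in Section \ref{reduction} for the $p$-Laplace case.

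For the pointwise bound, the equation $\mathcal{G} u = \delta_{x_0}$ in the viscosity sense translates, via the Trudinger--Wang correspondence between viscosity and $k$-Hessian measure solutions, to $\mu_k[-u] = \sigma\, u^k + \delta_{x_0}$. Labutin's estimate then produces
\begin{equation*}
u(x,x_0) \;\geq\; c\,\mathbf{W}_{\frac{2k}{k+1},\,k+1}^{R}\bigl(\sigma\, u^{k} + \delta_{x_0}\bigr)(x)
\end{equation*}
for an appropriate $R>0$. The Dirac part contributes the bare fundamental solution $c\abs{x-x_0}^{2-n/k}$ (using $\alpha s = 2k$ and $s-1=k$ in the Wolff exponents), while the $\sigma\, u^{k}$ part yields a nonlinear integral inequality in $u$ of precisely the form treated in Section \ref{lowbds}. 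Iterating this inequality as in the proof of Theorem \ref{l1lowbd} produces the double exponential factor in (\ref{l2lowest}): the Wolff potential centered at $x$ arises from iterating the inhomogeneity against itself, and the Riesz-type potential centered at $x_0$ comes from coupling the iteration with the singular source. The only substantive change from the quasilinear argument is the replacement of the Wolff exponents $(1,p)$ by $\bigl(\tfrac{2k}{k+1},\,k+1\bigr)$ and of the exponent $p-n$ by $2k-n$ in the bare fundamental solution.

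For part b), the argument is a Liouville-type result analogous to part b) of Theorem \ref{l1lowbd}. Once $2k \geq n$, the unperturbed $k$-Hessian operator admits no nontrivial positive fundamental solution decaying at infinity (so the Wolff potential of a unit point mass diverges), and the variant of the capacity inequality from part a), together with standard Liouville theorems for $k$-convex functions on $\mathbb{R}^n$, forces $\sigma\equiv 0$ and then $u\equiv 0$. I expect the main technical obstacle, as in the quasilinear case, to lie in ensuring that Labutin's local Wolff estimates combine cleanly with the Dirac source to produce an integral inequality centered correctly for the iteration in part a); the flexibility of the abstract induction lemma from Section \ref{lowbds} is precisely what enables the sharp centers $x$ and $x_0$ to appear separately in the two potential terms of (\ref{l2lowest}).
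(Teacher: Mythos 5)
Your treatment of the pointwise lower bound (\ref{l2lowest}) is essentially the paper's proof: the global Wolff potential estimate for $k$-Hessian measures (Theorem \ref{Hesswolff}, i.e.\ Labutin's estimate as globalized in \cite{PV}) converts the equation $\mu_k[-u]=\sigma u^k+\delta_{x_0}$ into the integral inequality (\ref{N2ineq}), which is (\ref{genfund}) with $\alpha=\tfrac{2k}{k+1}$, $s=k+1$, and Theorem \ref{genlowbd} then gives the double exponential. That much is right, and your identification of where the two exponentials come from in the iteration (Lemmas \ref{greenlowbd1} and \ref{greenlowbd2}) is correct.

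The gap is in the capacity bound (\ref{l2capest}). The paper does not prove it by a test-function argument ``against the variational definition (\ref{Hesscap})'' paralleling (\ref{supersol}); that route is presented only as an \emph{alternative} for the $p$-Laplacian (Lemma \ref{plapcap}). Two obstructions make your proposed transfer problematic. First, $\mathrm{cap}_k$ as defined in (\ref{Hesscap}) is a \emph{supremum} of Hessian measures of trial functions, not an infimum of an energy functional like (\ref{pcapintro}); the implication ``Sobolev-type embedding $\Rightarrow$ capacity bound'' used for $\mathrm{cap}_p$ therefore does not go through in the same way. Second, the integration-by-parts and Young's-inequality manipulations in Lemma \ref{plapcap} lean heavily on the monotone, divergence-form structure of $-\Delta_p$ acting on the truncations $T_k(u)\in W^{1,p}_{\mathrm{loc}}$; there is no comparably clean Hessian analogue in the paper. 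What the paper actually does is prove a purely integral-inequality result, Theorem \ref{gensigma}: given any positive $u$ with $u\geq C_0\mathcal{N}(u)$, the measure $\sigma$ is a discrete Carleson measure (Lemma \ref{carllem}, a Schur-test computation on the dyadic Wolff potential), and a dyadic averaging argument (Lemma \ref{dyshift}) upgrades this to $\int_E\mathbf{W}_{\alpha,s}(\chi_E\,d\sigma)\,d\sigma\leq C\sigma(E)$, hence $\sigma(E)\leq C\,\mathrm{cap}_{\alpha,s}(E)$ via Lemma \ref{equivcap}. For the Hessian case one then invokes Lemma \ref{hesscapequiv} to pass from $\mathrm{cap}_{\frac{2k}{k+1},k+1}$ to $\mathrm{cap}_k$. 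This Carleson/Schur argument is the unifying device that makes both the quasilinear and Hessian cases work simultaneously, and you need it (or something genuinely replacing it) here.

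Part b) is also off. The statement concerns an arbitrary nonnegative $u$ with $-u$ $k$-convex satisfying $\mathcal{G}(u)\geq 0$; there is no pole, so remarks about the Wolff potential of a Dirac mass are beside the point. Also the logic ``forces $\sigma\equiv 0$ and then $u\equiv 0$'' is inverted: under the paper's standing assumption $\sigma\not\equiv 0$, the Liouville argument forces $u\equiv 0$ directly. The paper disposes of b) by citing the local Hessian measure estimate of \cite{TW2}, Theorem 3.1, together with weak convergence of Hessian measures, and you should do likewise rather than appeal to the capacity condition from part a), which b) does not need.
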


\begin{thm}\label{l2upbd}  Let $1\leq k<n/2$, and suppose $\sigma$ is a nonnegative Borel measure satisfying (\ref{l2capest}).  There is a constant $C_0 = C_0(n,k)$, such that if $C<C_0$ and (\ref{l2capest}) holds with constant $C$, then there exists a fundamental solution $u(\,\cdot, x_0)$ of $\mathcal{G}$, together with a constant $c=c(n,k,C)$ so that 
\begin{equation}\begin{split}\label{l2upest}
u(x, x_0) \leq  c\abs{x-x_0}^{2-\frac{n}{k}}& \exp\Big( c\int_{0}^{\abs{x-x_0}}\Bigl(\frac{\sigma(B(x,r)}{r^{n-2k}}\Bigl)^{1/k}\frac{dr}{r}\Bigl)\\
&\cdot \exp\Bigl(c\int_{0}^{\abs{x-x_0}} 
\frac{\sigma(B(x_0,r))}{r^{n-2k}}\frac{dr}{r}\Bigl) .
\end{split}\end{equation}
\end{thm}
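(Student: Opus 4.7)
The plan is to mirror the proof of Theorem \ref{l1upbd}, substituting Labutin's Wolff potential estimate for the $k$-Hessian operator in place of the Kilpel\"{a}inen--Mal\'{y} estimate for the $p$-Laplacian. Labutin's result \cite{Lab1} asserts that if $-u$ is $k$-convex with $u=0$ on the boundary of a sufficiently large ball and $F_k(-u)=\mu$, then $u(x)$ is bounded above and below by constant multiples of $\mathbf{W}_{\frac{2k}{k+1},k+1}\mu(x)$. A direct computation shows that $\mathbf{W}_{\frac{2k}{k+1},k+1}\delta_{x_0}(x)$ is comparable to $|x-x_0|^{(2k-n)/k}$, which is the fundamental solution of the unperturbed $k$-Hessian operator when $k<n/2$. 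Thus Theorem \ref{l2upbd} reduces to producing a positive function $w$, dominated pointwise by the right-hand side of \eqref{l2upest}, that satisfies the integral inequality
$$w(x) \geq c \, \mathbf{W}_{\frac{2k}{k+1},k+1}\bigl(w^k \sigma + \delta_{x_0}\bigr)(x).$$

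I would first invoke the construction of Section \ref{construct} with parameters $\alpha = 2k/(k+1)$, $s=k+1$, applied to the measure $\sigma+\delta_{x_0}$. The capacity hypothesis \eqref{l2capest} with sufficiently small constant $C<C_0$ yields the Hessian analogue of the Maz'ya-type Carleson embedding (cf.\ \eqref{supersol}), and this is what guarantees that the relevant nonlinear operator maps a suitable cone of functions into itself and that the iteration scheme in Section \ref{construct} converges to a supersolution $w$ bounded by a constant multiple of the right-hand side of \eqref{l2upest}. Next I would pass from $w$ to an actual fundamental solution by approximation: take mollifications $\sigma_j \uparrow \sigma$ with compact support and an exhaustion $B_j \uparrow \mathds{R}^n$ centered at $x_0$, solve $F_k(-u_j) = \sigma_j u_j^k + \delta_{x_0}$ in $B_j$ with zero boundary data via Caffarelli--Nirenberg--Spruck theory and a monotone inner iteration, and use the supersolution $w$ together with a comparison principle to obtain the uniform bound $u_j \leq C_1 w$.

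The weak continuity of the $k$-Hessian operator (Theorem \ref{Hessweakcont}) then allows passage to the limit $u(\cdot,x_0) = \lim_j u_j$, and the limit is a viscosity fundamental solution inheriting the pointwise bound \eqref{l2upest}. The condition $u(\cdot,x_0)\in L^k_{\text{loc}}(\sigma)$ is immediate from \eqref{l2capest} together with the pointwise bound, and $\inf_{\mathds{R}^n} u = 0$ follows from the decay of $w$ at infinity.

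The main obstacle, and the real technical content, is the construction of the supersolution $w$ in Section \ref{construct}. One must verify that substituting the exponential ansatz
$$w(x) = c_1 |x-x_0|^{(2k-n)/k}\exp\Bigl(c_2 \mathbf{W}^{|x-x_0|}_{\frac{2k}{k+1},k+1}\sigma(x) + c_2 \mathbf{I}_{2k}^{|x-x_0|}\sigma(x_0)\Bigr)$$
back into $\mathbf{W}_{\frac{2k}{k+1},k+1}(\sigma w^k + \delta_{x_0})$ reproduces at most a constant multiple of $w$. The exponential of the Wolff potential in the ansatz is precisely what absorbs the nonlinear contribution from $\sigma w^k$ in the annular pieces away from $x_0$, while the linear Riesz-potential factor centered at $x_0$ controls the cross term arising from integration near the pole, where the $\delta_{x_0}$ contribution dominates. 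The smallness of $C$ in \eqref{l2capest} enters to ensure that the constant arising in this self-improvement inequality stays strictly below $1$ in a dyadic annular decomposition, so that the iteration defining $w$ converges rather than diverging; this is where the analogue of the quantitative Carleson embedding for Hessian capacity of \cite{TW1} is used.
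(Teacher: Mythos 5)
Your overall architecture is correct and matches the paper's: reduce to the integral inequality $w\ge c\,\mathbf{W}_{\frac{2k}{k+1},k+1}(w^k\sigma+\delta_{x_0})$ via Labutin's estimate (Theorem \ref{Hesswolff}), use the ansatz from Section \ref{construct} with $\alpha=2k/(k+1)$, $s=k+1$ to build the supersolution, then produce an actual fundamental solution by an approximation that stays below the supersolution, and pass to the limit by weak continuity of Hessian measures (Theorem \ref{Hessweakcont}). The bound $u\in L^k_{\text{loc}}(\sigma)$ and $\inf u=0$ are handled as you say.

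Where you diverge from the paper is the approximation step, and your version is heavier than it needs to be. The paper does not mollify $\sigma$ and does not invoke Caffarelli--Nirenberg--Spruck; it cites Lemma \ref{Hessiansequence} (which is Lemma 4.7 of \cite{PV2}), a ready-made monotone iteration result that works directly in the category of $k$-convex functions with Borel measure data and delivers, in one stroke, the solutions $u_m$ of $\mu_k[-u_m]=\epsilon\sigma u_{m-1}^k+\delta_{x_0}$ with $\inf u_m=0$. Your route --- regularizing $\sigma$, regularizing $\delta_{x_0}$, solving smooth Dirichlet problems on an exhausting family of balls via CNS, and then taking limits --- could in principle be pushed through, but you would need to supply several non-trivial pieces that the paper gets for free: that the mollified data remain admissible for CNS (the right-hand side must be nonnegative and one must stay in the $k$-admissible cone), that the mollified $\sigma_j$ still satisfy a uniform capacity bound so the supersolution controls $u_j$ uniformly, and that the limits commute (weak convergence of $\sigma_j u_j^k$ to $\sigma u^k$, which relies on the uniform bound $u_j\le C\,v$ together with dominated convergence in $L^k(\sigma)$). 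In effect you would be re-proving the content of \cite{PV2}, Lemma 4.7.

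Two small points of precision. First, the construction of Section \ref{construct} is applied to $\sigma$ alone, not to $\sigma+\delta_{x_0}$: a Dirac mass does not satisfy the capacity hypothesis (\ref{sigmacons}), and the $\delta_{x_0}$ contribution is accounted for separately by the power-law factor $|x-x_0|^{(2k-n)/k}$ built into the ansatz; your subsequent display for $w$ is correct, so this is a wording slip rather than a gap. Second, you should flag the role of the small factor $\epsilon$: the Wolff-potential estimate carries a constant $K$, so the iteration must be run with $\mu_k[-u_m]=\epsilon\sigma u_{m-1}^k+\delta_{x_0}$ for $\epsilon$ chosen so that $K\epsilon\le C_0$, and the theorem as stated is then obtained after the replacement $\sigma\mapsto\epsilon\sigma$ (equivalently, a slightly smaller $C_0$). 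Your bound ``$u_j\le C_1 w$'' gestures at this but does not make the self-improvement constant explicit.
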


\begin{rem}  Part b) of Theorem \ref{l2lowbd} is easy to see using well known local estimates.  For instance, one can readily deduce the result from \cite{TW2}, Theorem 3.1, along with a routine approximation argument using weak convergence of Hessian measures. 
\end{rem}

\section{Preliminaries}\label{background}

\subsection{Notation}  For an open set $\Omega$, we denote by $L^p_{\text{loc}}(\Omega)$ to be the space of functions locally integrable to the $p$-th power with respect to Lebesgue measure.  Similarly, $L_{\text{loc}}^p(\Omega, d\sigma)$ then denotes the space of functions which are locally integrable to the $p$-th power with respect to $\sigma$ measure. $W^{1,p}_{\text{loc}}(\Omega)$ is the space of functions $u\in L^p_{\text{loc}}(\Omega)$, with weak derivative $\nabla u \in L^p_{\text{loc}}(\Omega; \mathds{R}^n)$.  Finally, we will use the symbol $A \lesssim B$ to mean that $A\leq CB$ with the constant $C>0$ depending on the allowed parameters of the particular result being proved.

\subsection{}In this section we will introduce some fundamental results from the potential theory of nonlinear elliptic equations.  Two results will be key to our study: a potential estimate; and a weak continuity result.  The potential which the estimates will involve is called the Wolff potential \cite{HW}.  For $s>1$ and $0< \alpha s <n$, we define the Wolff potential of a nonnegative Borel measure $\mu$ by:
\begin{equation}\label{Wolff}\mathbf{W}_{\alpha, s}\mu (x) = \int_0^{\infty}\Bigl( \frac{\mu(B(x,r))}{r^{n-\alpha s}}\Bigl)^{1/(s-1)}\frac{dr}{r}
\end{equation}

We first will discuss quasilinear equations. The material regarding these equations is drawn from \cite{HKM, KM, KM1, PV, PV2, TW1, MZ1}. 

Let us assume that $\mathcal{A}: \mathds{R}^{n}\,\mathrm{x}\,\mathds{R}^{n} \rightarrow \mathds{R}^n$ satisfies:
$$ x\rightarrow \mathcal{A}(x, \xi) \;\; \text{is measurable for all } \xi\in \mathds{R}^n, \text{ and }$$
$$ \xi\rightarrow \mathcal{A}(x, \xi) \;\; \text{is continuous for a.e. } x \in \mathds{R}^n. $$
In addition suppose that there are constants $0<\alpha \leq \beta < \infty$ so that for a.e. $x\in \mathds{R}^n$:
$$\alpha \abs{\xi}^p \leq \mathcal{A}(x,\xi)\cdot \xi, \;\text{ and }\; \abs{\mathcal{A}(x, \xi)} \leq \beta \abs{\xi}^{p-1} .$$
We will also assume that:
$$( \mathcal{A}(x, \xi_1) - \mathcal{A}(x, \xi_2) ) \cdot (\xi_1 - \xi_2) > 0$$
whenever $\xi_1 \neq \xi_2$.

Now, let $\Omega$ be an open subset of $\mathds{R}^n$,  (we will be most interested in the case $\Omega = \mathds{R}^n$). Whenever $u \in \mathrm{W}^{1,p}_{\text{loc}}(\Omega)$, we define the divergence of $\mathcal{A}(x, \nabla u)$ in the distributional sense. 
As follows from the classical regularity theory of de Giorgi, Nash and Moser, any $u \in W^{1,p}_{\text{loc}}(\Omega)$ solution of $-\text{div}\mathcal{A}(x, \nabla u) = 0$ in the distributional sense has a locally H\"{o}lder continuous representative, and we call these representatives $\mathcal{A}$-harmonic functions.  Here and in the following the $p$-Laplacian operator corresponds to the choice of $\mathcal{A}(x, \xi) = \abs{\xi}^{p-2}\xi$, in this case $\mathcal{A}$-harmonic functions are called $p$-harmonic functions, and similarly $p$-superharmonic functions are $\mathcal{A}$-superharmonic functions (as defined below) in this special case. 

In analogy with classical  superharmonic functions, we define the $\mathcal{A}$-superharmonic functions via a comparison principle. We say that $u: \Omega \rightarrow (-\infty, \infty]$ is $\mathcal{A}$-superharmonic if $u$ is lower semicontinuous, is not identically infinite in any component of $\Omega$, and satisfies the following comparison principle:  Whenever $D\subset\subset \Omega$ and $h \in C( \bar{D})$ is $\mathcal{A}$-harmonic in $D$, with $h \leq u$ on $\partial D$, then $h \leq u$ in $D$. 

An $\mathcal{A}$-superharmonic function $u$ does not necessarily have to belong to $\mathrm{W}^{1,p}_{\text{loc}}(\Omega)$, but its truncates $T_k(u) = \min (u,k) \in \mathrm{W}^{1,p}_{\text{loc}}(\Omega)$ for all $k>0$.   In addition $T_k(u)$ are supersolutions, i.e. $-\text{div}\mathcal{A}(\,\cdot, \nabla T_k(u)) \geq 0$, in the distributional sense (see \cite{HKM}). 

The above paragraph leads us to the definition of the generalized gradient of an $\mathcal{A}$-superharmonic function $u$ as:
$$Du = \lim_{k\rightarrow \infty} \nabla (T_k(u)) .$$

\begin{rem} There are alternative notions of solutions which we could have introduced to obtain our results, for instance either \textit{renormalized solutions} or \textit{supersolutions up to all levels}, see \cite{DMMOP99} and \cite{MZ1} respectively.  We chose to use the language of $\mathcal{A}$-superharmonic functions because Theorems \ref{weakcont} and \ref{kmpotest} were developed in this framework.
\end{rem}

Let $u$ be $\mathcal{A}$-superharmonic and let $1\leq q < n/(n-1)$. Then it is proved in \cite{KM} that $\abs{Du}^{p-1}$ and $\mathcal{A}(\cdot, Du)$ belong to $\mathrm{L}^q_{loc}(\Omega)$.  This allows us to define a nonnegative distribution for each  $\mathcal{A}$-superharmonic function $u$ by:
\begin{equation}
-\text{div}\mathcal{A}(x, \nabla u) (\psi) = \int_{\Omega} \mathcal{A}(x, Du)\cdot \nabla \psi \; dx
\end{equation}
for $\psi \in C^{\infty}_{0}(\Omega)$.  So, the Riesz representation theorem yields the existence of a unique nonnegative Borel measure $\mu[u]$ so that $-\text{div}\mathcal{A}(x, \nabla u) = \mu[u]$.  Furthermore, by the integrability of the gradient, it follows that for any $r>n$:
\begin{equation}\label{widerclass}\int_{\Omega} \mathcal{A}(\, \cdot, Du) \cdot\nabla \phi dx = \int_{\Omega} \phi d\mu, \text{ for all } \phi\in W^{1,r}(\Omega) \text{ with compact support.}
\end{equation}
For a nonnegative measure $\omega$ we will say that $-\text{div}\mathcal{A}(\cdot, \nabla u) = \omega$ in the $p$-superharmonic sense if $u$ is $p$-superharmonic, and $\mu[u] = \omega$.  Thus $\mathcal{L}(u) = \omega$ in the $p$-superharmonic sense if $\mu[u] = \sigma u^{p-1} + \omega$. 

We now state a very useful convergence result, contained in Kileplainen and Maly \cite{KM}, Theorem 1.17.

\begin{thm}\label{limsuper} \cite{KM} Suppose $\{u_j\}_j$ is a sequence of nonnegative $\mathcal{A}$-superharmonic functions in an open set $\Omega$.  Then there is a subsequence $\{u_{j_k}\}_k$ which converges almost everywhere to a nonnegative function $u$ which is either $p$-superharmonic or identically infinite in each component of $\Omega$.
\end{thm}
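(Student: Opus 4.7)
The plan is a diagonal extraction based on Caccioppoli estimates for the truncations $T_k(u_j)$, followed by identification of the a.e.\ limit via the comparison-principle characterization of $\mathcal{A}$-superharmonicity.

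First, I would exploit the truncation structure recalled in the excerpt: for each $k > 0$, $T_k(u_j) = \min(u_j, k)$ lies in $W^{1,p}_{\text{loc}}(\Omega)$ and is a distributional supersolution of the homogeneous equation. Since $0 \leq T_k(u_j) \leq k$, the standard Caccioppoli estimate for supersolutions gives a local $W^{1,p}$ bound independent of $j$ (though depending on $k$). Rellich--Kondrachov together with a diagonal argument over $k \in \mathds{N}$ and over an exhaustion of $\Omega$ then yields a subsequence, still written $\{u_j\}$, with $T_k(u_j) \to V_k$ a.e.\ in $\Omega$ for every $k$. Since $k\mapsto T_k(u_j)$ is nondecreasing, so is $k \mapsto V_k$; setting $u := \sup_k V_k$ and using $\{u_j < k\}\subseteq \{T_k(u_j) = u_j\}$, one verifies $u_j \to u$ a.e.

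The next step is to show that the lower semicontinuous regularization $u^*$ of $u$ is either identically $+\infty$ on a component of $\Omega$ or else $\mathcal{A}$-superharmonic there. The key is the comparison-principle definition: given $D \Subset \Omega$ and $h \in C(\overline{D})$ $\mathcal{A}$-harmonic in $D$ with $h \leq u^*$ on $\partial D$, one needs $h \leq u^*$ in $D$. This is deduced by slightly shrinking $D$ to $D' \Subset D$, using lower semicontinuity together with $V_k \nearrow u$ to get $h - \varepsilon \leq u_j$ on $\partial D'$ along a further subsequence, applying the comparison principle to each $u_j$, and passing to the limits $j\to\infty$, $\varepsilon \to 0$, $D' \to D$. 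For the component-wise dichotomy, an $\mathcal{A}$-superharmonic function whose infinity set has positive $p$-capacity on a connected open set must be identically $+\infty$ there, so the open set $\{u^* = +\infty\}$ is relatively closed in each component, forcing the alternative.

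The main obstacle is this identification step. The nonlinearity of $\mathcal{A}$ precludes any direct passage to the limit in the distributional equations $-\text{div}\,\mathcal{A}(x,\nabla u_j) = \mu[u_j]$, so one cannot simply take weak limits of the Riesz measures $\mu[u_j]$. The argument must instead proceed through the purely order-theoretic comparison principle, which only requires a.e.\ control of the truncates rather than Sobolev convergence of the $u_j$ themselves. Handling the infinite-on-a-component alternative via polarity of $\{u^* = +\infty\}$ is the final subtle point.
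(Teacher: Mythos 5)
The paper offers no proof of this statement; it is quoted from \cite{KM} (Theorem 1.17), so I evaluate your proposal on its own merits. Your compactness steps are correct: the Caccioppoli estimate for the bounded supersolutions $T_k(u_j)$ gives uniform local $W^{1,p}$ bounds, Rellich--Kondrachov together with a diagonal extraction over $k$ and an exhaustion of $\Omega$ yields $T_k(u_j)\to V_k$ a.e.\ for every $k$, and the pointwise argument via $\{u_j<k\}\subseteq\{T_k(u_j)=u_j\}$ does produce $u_j\to u:=\sup_k V_k$ a.e.\ along the subsequence.

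The genuine gap is in the identification step, which you flag as the crux but do not actually close. You propose to verify the comparison principle directly by shrinking $D$ to $D'$ and obtaining $h-\varepsilon\leq u_j$ on $\partial D'$ for large $j$ ``by lower semicontinuity together with $V_k\nearrow u$.'' But $\partial D'$ is a Lebesgue-null set, and all you know about $u_j$ is convergence to $u$ almost everywhere: this gives no pointwise lower bound for $u_j$ anywhere on $\partial D'$. Lower semicontinuity of $u^*$ is a statement about the limit $u$, not about the approximants $u_j$, and Egorov cannot help because the exceptional set can still meet $\partial D'$. This is not a technicality one can shrink away; the order-theoretic route intrinsically requires quasi-everywhere control on thin sets, which a.e.\ convergence simply does not supply. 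You have, in effect, traded the difficulty of passing to the limit in the nonlinear operator for an equally hard pointwise-control problem, without solving either.

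What actually closes the argument is the step you explicitly declined to take: showing each truncation $T_k(u)$ is a distributional supersolution by passing to the limit in $-\operatorname{div}\mathcal{A}(x,\nabla T_k(u_j))\geq 0$. Weak $W^{1,p}_{\mathrm{loc}}$ convergence of the gradients is not enough since $\mathcal{A}(x,\cdot)$ is nonlinear, but the strict monotonicity hypothesis $(\mathcal{A}(x,\xi_1)-\mathcal{A}(x,\xi_2))\cdot(\xi_1-\xi_2)>0$ is exactly what rescues it: testing with $\zeta\,(T_k(u_j)-T_k(u))$ and using the Caccioppoli bounds yields $\nabla T_k(u_j)\to\nabla T_k(u)$ a.e.\ (the Browder--Minty/Boccardo--Murat device), hence $\mathcal{A}(x,\nabla T_k(u_j))\rightharpoonup\mathcal{A}(x,\nabla T_k(u))$ in $L^{p'}_{\mathrm{loc}}$, so $T_k(u)$ is a supersolution and the essential--liminf regularization of $u$ is $\mathcal{A}$-superharmonic wherever $u$ is finite on a set of positive measure. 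Finally, the component-wise dichotomy should be handled via the weak Harnack inequality for nonnegative supersolutions and a chaining argument, not via polarity of $\{u^*=+\infty\}$: the latter is circular, since one only knows that set is polar \emph{after} superharmonicity is established, and that set is in any case a $G_\delta$ rather than the open set your argument requires.
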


The next result, first stated explicitly in \cite{TW1}, shows that $\mathcal{A}$-Laplace operator is weakly continuous. 

\begin{thm} \cite{TW1} \label{weakcont} Suppose $\{u_j\}_j$ is a sequence of nonnegative $\mathcal{A}$-superharmonic functions which converge almost everywhere to an $\mathcal{A}$-superharmonic function $u$.  Then $\mu [u_j]$ converges weakly to $\mu[u]$.
\end{thm}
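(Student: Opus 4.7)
The plan is to establish weak convergence of the measures by passing to the limit in the defining distributional identity. Since each $\mu[u_j]$ satisfies
$$
\int_\Omega \phi\, d\mu[u_j] = \int_\Omega \mathcal{A}(x, Du_j)\cdot\nabla\phi\, dx \quad \text{for } \phi \in C^\infty_0(\Omega),
$$
it suffices to prove that the right-hand side converges to the analogous expression for $u$ for every such $\phi$. The central difficulty is that $Du_j$ only lies in $L^q_{\text{loc}}$ for $q<n/(n-1)$, which is strictly below the natural $L^p$ scale where the weak formulation of the equation is coercive, and the $u_j$ are supersolutions rather than solutions, so monotonicity techniques must be wielded with care.

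First I would record the uniform integrability. The gradient bounds of \cite{KM} yield $\||Du_j|^{p-1}\|_{L^q(K)} \leq C(K,q)\,(1 + \|u_j\|_{L^1(K')})$ for any $q<n/(n-1)$ and $K\subset\subset K'\subset\subset \Omega$. Local $L^1$ convergence of $u_j\to u$ follows from the pointwise convergence combined with Theorem \ref{limsuper} and standard local $L^s$ bounds on nonnegative $\mathcal{A}$-superharmonic functions for some $s>1$, so the $L^q_{\text{loc}}$ bound on $|Du_j|^{p-1}$ is uniform in $j$. The same uniform bound holds for $|Du|^{p-1}$.

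Next I would reduce to truncations. For each level $k>0$, $T_k(u_j) \in W^{1,p}_{\text{loc}}(\Omega)$ is a bounded supersolution with a uniform Caccioppoli estimate $\|\nabla T_k(u_j)\|_{L^p(K)}\le C(K, k)$, so by weak compactness and the a.e.\ convergence we may assume $T_k(u_j) \rightharpoonup T_k(u)$ in $W^{1,p}_{\text{loc}}$. The discrepancy between $\int \mathcal{A}(x,Du_j)\cdot\nabla\phi\, dx$ and $\int\mathcal{A}(x,\nabla T_k(u_j))\cdot\nabla\phi\, dx$ is controlled by $\int_{\{u_j>k\}\cap K} |Du_j|^{p-1}|\nabla\phi|\, dx$, which vanishes as $k\to\infty$ uniformly in $j$ by H\"older's inequality applied to the uniform $L^q$ bound together with the uniform smallness of $|\{u_j>k\}\cap K|$ obtained from Chebyshev.

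The main obstacle, and the technical heart of the argument, is upgrading the weak convergence of $\nabla T_k(u_j)$ to pointwise a.e.\ convergence. I would use a Minty--Browder monotonicity argument: test the supersolution inequality $-\text{div}\,\mathcal{A}(x, \nabla T_k(u_j))\ge 0$ against $(T_k(u_j)-T_k(u))\eta$ for a nonnegative smooth cutoff $\eta$. Nonnegativity of the associated measure combined with the strong $L^p_{\text{loc}}$ convergence $T_k(u_j)\to T_k(u)$ from Rellich compactness yields
$$
\limsup_{j\to\infty} \int_\Omega \bigl(\mathcal{A}(x,\nabla T_k(u_j))-\mathcal{A}(x,\nabla T_k(u))\bigr)\cdot\bigl(\nabla T_k(u_j)-\nabla T_k(u)\bigr)\,\eta\, dx \le 0,
$$
and the strict monotonicity of $\mathcal{A}(x,\cdot)$ then forces $\nabla T_k(u_j)\to\nabla T_k(u)$ almost everywhere on the support of $\eta$. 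Combining this pointwise convergence with the uniform $L^q$-integrability, Vitali's convergence theorem delivers $\mathcal{A}(x,\nabla T_k(u_j))\to\mathcal{A}(x,\nabla T_k(u))$ in $L^1_{\text{loc}}$. A diagonal procedure in $k\to\infty$, together with the truncation tail estimate from the previous paragraph, then completes the proof.
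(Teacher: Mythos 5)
The paper does not prove this theorem; it is quoted verbatim from Trudinger and Wang \cite{TW1} (the underlying gradient-convergence machinery is from Kilpel\"ainen--Mal\'y \cite{KM}). So there is no in-paper proof to match against; your sketch must stand on its own, and there are two genuine gaps in it.

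The first is at the heart of your Minty--Browder step. You propose to ``test the supersolution inequality $-\mathrm{div}\,\mathcal{A}(x,\nabla T_k(u_j))\ge 0$ against $(T_k(u_j)-T_k(u))\eta$,'' but this test function changes sign, so the supersolution inequality gives you nothing for it. What \emph{is} legitimate is to use the Riesz measure $\mu_{k,j}:=-\mathrm{div}\,\mathcal{A}(x,\nabla T_k(u_j))$ and write $\int \mathcal{A}(x,\nabla T_k(u_j))\cdot\nabla\phi\,dx=\int\phi\,d\mu_{k,j}$ for arbitrary $\phi\in W^{1,p}$ with compact support. After integrating by parts, the first two terms of the monotonicity expansion produce the quantity
\begin{equation*}
\int \bigl(T_k(u_j)-T_k(u)\bigr)\eta\,d\mu_{k,j}-\int \mathcal{A}(x,\nabla T_k(u_j))\cdot\bigl(T_k(u_j)-T_k(u)\bigr)\nabla\eta\,dx.
\end{equation*}
The second integral does vanish from the dominated $L^p$ convergence of $T_k(u_j)$. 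But the first does \emph{not} vanish from Rellich/dominated convergence, because that convergence is with respect to Lebesgue measure while $\mu_{k,j}$ is in general mutually singular with Lebesgue measure and can concentrate on sets whose Lebesgue measure is shrinking. Controlling this term is exactly the technical heart of the Kilpel\"ainen--Mal\'y / Trudinger--Wang arguments, and it is handled there by genuinely different devices: comparison with $\mathcal{A}$-harmonic Poisson modifications on balls, or testing with sign-restricted functions such as $\min\{\varepsilon, (T_k(u)-T_k(u_j))^+\}\eta$, which are honest nonnegative test functions for the supersolution inequality. As written, your monotonicity step is circular at precisely the point where the proof must do real work.

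A smaller but still real gap: you assert that the uniform $L^q_{\mathrm{loc}}$ bound on $|Du_j|^{p-1}$ follows because ``local $L^1$ convergence of $u_j\to u$ follows from the pointwise convergence.'' Pointwise a.e.\ convergence of nonnegative functions gives Fatou's inequality in one direction only; it does not give uniform local $L^1$ bounds. To obtain the needed uniform local $L^s$ bound on the $u_j$ one must invoke the weak Harnack inequality for nonnegative $\mathcal{A}$-supersolutions together with finiteness of $u$ at a point, and this deserves to be stated. The outer structure of your sketch (reduce to bounded truncations, prove a.e.\ gradient convergence, pass to the limit with Vitali, diagonalize in $k$) is the right one, and the tail estimate controlling the gap between $Du_j$ and $\nabla T_k(u_j)$ is fine; the argument needs the monotonicity step replaced by one of the legitimate supersolution-compatible variants, and the uniform integrability input justified via weak Harnack.
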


The second major result we need is the Wolff's potential estimates of Kilpel\"{a}inen and Maly \cite{KM1} (see also \cite{MZ1}, \cite{PV}). 

\begin{thm}\cite{KM1}\label{kmpotest} Let $u$ be a nonnegative $\mathcal{A}$-superharmonic function in $\mathds{R}^n$ so that $\inf_{x\in \mathds{R}^n} u(x) =0$.  If $\mu = -\rm{div}\,\mathcal{A}(\cdot\,, \nabla u)$, then there is a constant $K = K(n,p, \alpha, \beta)$, so that for all $x\in \mathds{R}^n$, 
\begin{equation}
\frac{1}{K} \,  \mathbf{W}_{1,p} \mu (x)\leq u(x) \leq  K \, \mathbf{W}_{1,p} \mu (x) .
\end{equation}
\end{thm}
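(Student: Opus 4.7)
The plan is to prove the two-sided estimate by essentially independent arguments for the lower and upper bounds. The lower bound follows from a comparison principle applied at each dyadic scale, while the upper bound requires the key Kilpel\"{a}inen-Maly technical innovation: a one-scale reverse estimate proved via a delicate test function argument, then iterated dyadically.

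For the lower bound I would work scale by scale. Fix $x_0 \in \mathds{R}^n$ and $r>0$, and consider the nonnegative $\mathcal{A}$-superharmonic function $v_r$ on $\mathds{R}^n$ satisfying $-\mathrm{div}\,\mathcal{A}(\cdot, \nabla v_r) = \mu|_{B(x_0,r)}$ and $\inf v_r = 0$; its existence is part of the Kilpel\"{a}inen-Maly theory of measure data problems. The weak comparison principle together with the hypothesis $\inf u = 0$ implies $u \geq v_r$ on $\mathds{R}^n$. A direct testing argument (or equivalently a capacity computation) yields the pointwise bound
$$v_r(x_0) \geq c\left(\frac{\mu(B(x_0, r))}{r^{n-p}}\right)^{1/(p-1)}.$$
Applying this at the dyadic radii $r_j = 2^j$ for $j \in \mathds{Z}$ and summing recovers the full lower Wolff potential bound on $u(x_0)$.

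For the upper bound the technical heart is the one-scale reverse estimate: there is a constant $C$ depending only on $n, p, \alpha, \beta$ such that for every $x_0 \in \mathds{R}^n$ and every $r>0$,
$$\inf_{B(x_0, r/2)} u \leq \inf_{B(x_0, 2r)} u + C\left(\frac{\mu(B(x_0, 2r))}{r^{n-p}}\right)^{1/(p-1)}.$$
To establish this I would test the distributional identity for $-\mathrm{div}\,\mathcal{A}(\cdot, \nabla T_k u)$, where $T_k u = \min(u,k) \in W^{1,p}_{\mathrm{loc}}$, against the (suitably regularized) function $\eta^p (T_k u - \ell)_+^{-q}$, with $\eta$ a cutoff supported in $B(x_0, 2r)$, $\ell$ chosen slightly below $\inf_{B(x_0, 2r)} u$, and $q \in (0, p-1)$ a carefully tuned exponent. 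The structural inequalities on $\mathcal{A}$ convert this into a Caccioppoli-type estimate for $(T_k u - \ell)_+^{(p-1-q)/p}$, which, via Sobolev embedding and a Moser-type iteration in the exponent, delivers the one-scale bound. Telescoping over the dyadic radii $r_j = 2^{-j}R$ gives $u(x_0) \leq C\,\mathbf{W}_{1,p}\mu(x_0) + C\inf_{B(x_0, R)} u$; sending $R \to \infty$ and invoking $\inf u = 0$ completes the proof.

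The main obstacle is the one-scale reverse estimate. The singular test function $\eta^p (T_k u - \ell)_+^{-q}$ must be regularized carefully to remain admissible in the distributional formulation against the general Borel measure $\mu$, and the exponent $q$ must be chosen so that $(T_k u - \ell)_+^{p-1-q}$ retains enough Sobolev regularity for the embedding into $L^\infty$ to deliver a pointwise bound; the constants must be tracked uniformly in $k$ and depend only on the structural parameters $n, p, \alpha, \beta$. This delicate balancing of test function, exponent, and regularization is the genuine technical contribution of Kilpel\"{a}inen and Maly; once the one-scale estimate is secured, the dyadic iteration and the global conclusion follow by routine means.
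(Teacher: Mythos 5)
Note first that the paper does not prove this theorem; it is quoted verbatim from Kilpel\"{a}inen--Maly \cite{KM1} and then used as a black box, so you are being compared against the original proof rather than anything in this manuscript. Your sketch of the \emph{upper} bound is faithful to the KM strategy in spirit: a one-scale reverse estimate obtained by testing against a negative power $\eta^p(T_ku-\ell)_+^{-q}$, converted to a Caccioppoli inequality via the structure conditions, fed through Sobolev embedding, and then telescoped over dyadic radii with $\inf_{B(x_0,R)}u\to 0$ as $R\to\infty$. (Whether the clean exponent $1/(p-1)$ falls out in a single step, rather than after optimizing an auxiliary parameter, is a fine point, but the architecture you describe is the right one.)

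Your \emph{lower} bound argument, however, has a genuine gap. Even granting that $u\ge v_r$ holds for each $r$ --- and note this is a comparison between two entire solutions with the normalization $\inf=0$, which is a nontrivial fact requiring exhaustion by balls together with Theorems \ref{limsuper} and \ref{weakcont}, not merely the ``weak comparison principle'' --- the conclusion you draw is wrong. The family of inequalities $u(x_0)\ge v_{r_j}(x_0)\ge c\,\bigl(\mu(B(x_0,r_j))/r_j^{n-p}\bigr)^{1/(p-1)}$, one for each $j$, gives only
$$u(x_0)\ge c\,\sup_{j}\Bigl(\frac{\mu(B(x_0,r_j))}{r_j^{n-p}}\Bigr)^{1/(p-1)},$$
a supremum, not the summed Wolff potential. ``Summing'' the bounds is not a valid step: for $p\ne 2$ the operator is nonlinear, so you cannot decompose $\mu$ over dyadic annuli and superpose the corresponding solutions, which is exactly why the Wolff potential rather than a Riesz potential appears. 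The correct route mirrors your upper-bound scheme: prove the \emph{one-scale telescoping} inequality
$$\inf_{B(x_0,r)}u\;\ge\;\inf_{B(x_0,2r)}u\;+\;c\Bigl(\frac{\mu(B(x_0,r))}{r^{n-p}}\Bigr)^{1/(p-1)},$$
obtained by comparing $u-\inf_{B(x_0,2r)}u$ on the \emph{bounded} domain $B(x_0,2r)$ with the solution of a Dirichlet problem there driven by $\mu|_{B(x_0,r)}$ and zero boundary data, and then telescope along $r_j=2^{-j}R$ exactly as in the upper bound. The crucial subtraction of $\inf_{B(x_0,2r)}u$ at each scale is what converts the sup into a sum, and it is absent from your proposal.
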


\subsection{}We now turn to the fully nonlinear counterpart of these results. A very recent and comprehensive account of the $k$-Hessian equation is \cite{Wang1}.  Here $k$-convex functions associated to the $k$-Hessian operator, introduced by Trudinger and Wang \cite{TW2}, will play the role of $\mathcal{A}$-superharmonic functions in the quasilinear theory above.  Let $\Omega \subset \mathds{R}^n$ be an open set, let $k=1, \dots, n$ and $u\in C^2(\Omega)$, then the $k$-Hessian operator is:
$$F_k(u) = \sum_{1\leq i_1< \dots <i_k \leq n}\lambda_{i_1}\dots \lambda_{i_k}
$$
where $\lambda_1, \dots, \lambda_n$ are the eigenvalues of the matrix $D^2 u$.  We will then say that $u$ is $k$-convex in $\Omega$ if $u:\Omega \rightarrow [-\infty, \infty)$ is upper semicontinuous and satisfies $F_k(u)\geq 0$ in the viscosity sense, i.e. for any $x\in \Omega$, $F_k(q)(x)\geq 0$ for any quadratic polynomial $q$ so that $u-q$ has a local finite maximum at $x$.  Equivalently (see \cite{TW2}), we may define $k$-convex functions by a comparison principle: an upper semicontinuous function $u : \Omega \rightarrow [-\infty, \infty)$ is $k$-convex in $\Omega$ if for every open set $D \subset\subset \Omega$, and $v\in C^2_{\text{loc}}(D)\cap C(\bar{D})$ with $F_k(v)\geq 0$ in $D$, then 
$$ u\leq v \;\text{on}\;\partial D \implies \; u\leq v \; \text{in}\; D .$$ 
Let $\Phi ^k(\Omega)$ be the set of $k$-convex functions such that $u$ is not identically infinite in each component of $\Omega$.  The following weak continuity result is key to us. 
\begin{thm}\cite{TW2}\label{Hessweakcont} Let $u\in \Phi^k(\Omega)$. Then there is a  nonnegative Borel measure $\mu_k[u]$ in $\Omega$ such that 
\begin{itemize}
\item $\mu_k[u] = F_k(u)$ whenever $u\in C^2(\Omega)$, and
\item If $\{u_m\}_m$ is a sequence in $\Phi^k(\Omega)$ converging in $L^1_{\text{loc}}(\Omega)$ to a function $u$, then the sequence of measures $\{\mu_k[u_m]\}_m$ converges weakly to $\mu_k[u]$. 
\end{itemize}
\end{thm}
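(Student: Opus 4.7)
The plan is to construct $\mu_k[u]$ for a general $u \in \Phi^k(\Omega)$ by approximation from the smooth case, and then to deduce the weak continuity as a byproduct of the construction. For $u \in C^2(\Omega) \cap \Phi^k(\Omega)$ the definition $d\mu_k[u] = F_k(u) \, dx$ is unambiguous and gives a nonnegative Radon measure by the viscosity formulation of $k$-convexity. For general $u$ I would work with the standard mollifications $u^\epsilon = u * \rho_\epsilon$, which are smooth, inherit $k$-convexity on $\Omega_\epsilon = \{x : \text{dist}(x,\partial\Omega) > \epsilon\}$ (convolution against a symmetric nonnegative kernel preserves $k$-convexity), and converge to $u$ in $L^1_{\text{loc}}(\Omega)$.

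The principal obstacle is a local uniform-in-$\epsilon$ bound on $\int_D F_k(u^\epsilon) \, dx$ for any $D \subset\subset \Omega$, which is obtained from the divergence structure of $F_k$. For smooth $u$ one has the identity
\begin{equation*}
k F_k(u) = \partial_j\bigl(T_{k-1}^{ij}(D^2 u)\, \partial_i u\bigr),
\end{equation*}
where $T_{k-1}^{ij} = \partial F_k / \partial u_{ij}$ is the $(k-1)$-Newton tensor, which is row-wise divergence-free, i.e.\ $\partial_j T_{k-1}^{ij}\equiv 0$. Testing against a cutoff $\eta \in C^\infty_0(\Omega)$ with $\eta \equiv 1$ on $D$, integrating by parts, and iterating (invoking the analogous divergence structure of $T_{k-1}^{ij}$ and using the nonnegativity of the intermediate symmetric functions $F_j(u) \geq 0$, $j < k$, for smooth $k$-convex $u$) yields by induction on $k$ an a priori estimate of the form
\begin{equation*}
\int_D F_k(u^\epsilon)\, dx \leq C(n,k,D,D'')\bigl(1 + \norm{u^\epsilon}_{L^\infty(D'')}\bigr)^k
\end{equation*}
for $D \subset\subset D'' \subset\subset \Omega$. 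For locally bounded $u$ this is the uniform bound sought; the unbounded case is recovered by first truncating $u_j = \max(u,-j)$ and taking $j \to \infty$ at the end.

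With weak precompactness of $\{F_k(u^\epsilon)\, dx\}$ on compacta established, I define $\mu_k[u]$ as a weak subsequential limit. Independence of the choice of approximation, and simultaneously the weak continuity assertion, follow from a diagonal argument: given $u_m \to u$ in $L^1_{\text{loc}}(\Omega)$ with $u_m \in \Phi^k(\Omega)$, pick $\epsilon_m \downarrow 0$ with $u_m^{\epsilon_m} \to u$ in $L^1_{\text{loc}}$, and use the iterated divergence identity of the previous paragraph to show that $F_k(u_m^{\epsilon_m})\, dx - \mu_k[u_m]$ tends weakly to zero as $\epsilon_m \to 0$ while $F_k(u_m^{\epsilon_m})\, dx \to \mu_k[u]$ weakly along the combined sequence. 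Applied with $u_m \equiv u$, this forces any two weak subsequential limits of the approximating measures to coincide, hence $\mu_k[u]$ is well-defined; applied to a genuine sequence $u_m \to u$, it yields $\mu_k[u_m] \to \mu_k[u]$ weakly. The hardest single ingredient is the a priori $L^\infty \to L^1$ estimate, which crucially exploits the divergence-plus-positivity structure special to the $k$-Hessian and has no analogue for generic fully nonlinear elliptic operators.
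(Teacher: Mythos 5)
This theorem is not proved in the paper at all; it is quoted verbatim from Trudinger and Wang \cite{TW2} (``Hessian Measures~II,'' Ann.\ Math.\ 150 (1999)), so there is no ``paper's proof'' to compare against. Your sketch does capture the broad shape of the Trudinger--Wang argument---mollification, an $L^\infty$-to-$L^1$ estimate on $F_k$ exploiting the divergence structure, and a compactness extraction---but the two steps you identify as the crux are also the ones your write-up leaves genuinely unresolved.

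First, the a priori bound $\int_D F_k(u^\epsilon)\,dx \lesssim (1+\|u^\epsilon\|_{L^\infty})^k$ does not follow from the naive iteration you describe. After one integration by parts using $kF_k(u)=\partial_j(T_{k-1}^{ij}\partial_i u)$ you are left with $\int T_{k-1}^{ij}\,\partial_i u\,\partial_j\eta$, which still involves the (a priori uncontrolled) gradient and the $(k-1)$-Newton tensor, not $F_{k-1}(u)$ directly; the Trudinger--Wang estimate is obtained by a considerably more delicate scheme that simultaneously controls $\|Du\|_{L^q}$ for $q<nk/(n-k)$ and the Hessian integrals, using carefully chosen powers of the cutoff and a bootstrap across all the intermediate symmetric functions. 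Asserting that ``iterating'' gives the bound ``by induction on $k$'' conceals exactly where the analytical work lives. Second, and more seriously, the diagonal argument you propose for simultaneously proving well-definedness of $\mu_k[u]$ and weak continuity is circular: the claim that $F_k(u_m^{\epsilon_m})\,dx - \mu_k[u_m]$ tends weakly to zero while $F_k(u_m^{\epsilon_m})\,dx\to\mu_k[u]$ \emph{is} the weak continuity statement---you have restated the theorem rather than proved it. Compactness from the a priori bound only gives subsequential weak limits; showing these limits are independent of the approximating sequence (and, for the unbounded case, that $\mu_k[\max(u,-j)]\to\mu_k[u]$, which again invokes weak continuity before it is available) is where Trudinger and Wang spend most of the paper, using a monotone approximation scheme and a careful reduction to continuous $k$-convex functions. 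So the architecture is right, but the proposal should be honest that steps two and three are the theorem, not preliminaries to it.
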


The measure $\mu_k[u]$ associated to $u\in \Phi^k(\Omega)$ is called the \textit{Hessian measure} of $u$.  Hessian measures were used by Labutin \cite{Lab1} to deduce Wolff's  potential estimates for a $k$-convex function in terms of its Hessian measure.  The following global version of Labutin's estimate is deduced from his result in \cite{PV}:
\begin{thm}\cite{PV} \label{Hesswolff}
Let $1\leq k \leq n$, and suppose that $u\geq 0$ is such that $-u\in \Phi^k(\Omega)$ and $\inf_{x\in \mathds{R}^n}u(x) =0$.  Then, if $\mu=\mu_k[u]$, there is a positive constant $K$, depending on $n$ and $k$, such that:
$$c_1\mathbf{W}_{\frac{2k}{k+1}, k+1}\mu(x) \leq u(x) \leq c_2 \mathbf{W}_{\frac{2k}{k+1}, k+1}\mu(x), \qquad x \in \mathds{R}^n.
$$
\end{thm}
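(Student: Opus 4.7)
The plan is to promote Labutin's local Wolff potential estimate for $k$-convex functions (\cite{Lab1}) to a global bound on all of $\mathds{R}^n$, using the hypothesis $\inf_{\mathds{R}^n} u = 0$ to kill the boundary contribution in the limit. The structure is parallel to the quasilinear globalization of Kilpel\"ainen--Maly that yields Theorem \ref{kmpotest}.

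First I would record Labutin's local estimate in the following form, which is how it is used in \cite{PV}. If $-u \in \Phi^k(\mathds{R}^n)$, $u \geq 0$, and $\mu = \mu_k[-u]$, then for every $x \in \mathds{R}^n$ and every $R>0$ there exist constants $c_1,c_2,c_3>0$ depending only on $n,k$ such that
\begin{equation*}
c_1\, \mathbf{W}^{R}_{\frac{2k}{k+1},\, k+1}\mu(x) \;\leq\; u(x) \;\leq\; c_2\, \mathbf{W}^{2R}_{\frac{2k}{k+1},\, k+1}\mu(x) \;+\; c_3 \inf_{B(x,R)} u.
\end{equation*}
The lower bound is immediate from Labutin's interior estimate together with $u\ge 0$; the upper bound is the form obtained after subtracting the constant $\inf_{B(x,R)}u$ from $u$ (which preserves $k$-convexity of $-u$) and applying the interior estimate to the shifted function.

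Second I would observe that because $-u$ is upper semicontinuous (it is $k$-convex), $u$ itself is lower semicontinuous. Combined with $\inf_{\mathds{R}^n} u = 0$, this gives: for every $\varepsilon > 0$ there is $y_\varepsilon \in \mathds{R}^n$ with $u(y_\varepsilon) < \varepsilon$. Fixing $x$ and choosing $R > |x - y_\varepsilon|$ then forces $\inf_{B(x,R)} u < \varepsilon$, so $\inf_{B(x,R)} u \to 0$ as $R \to \infty$. Now let $R \to \infty$ in the displayed two-sided estimate. The truncated potential $\mathbf{W}^{R}_{\frac{2k}{k+1},k+1}\mu(x)$ increases monotonically to $\mathbf{W}_{\frac{2k}{k+1},k+1}\mu(x)$ as $R\to\infty$, so the lower bound passes to the limit by monotone convergence. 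In the upper bound, the error term $c_3 \inf_{B(x,R)}u$ vanishes and $\mathbf{W}^{2R}_{\frac{2k}{k+1},k+1}\mu(x) \uparrow \mathbf{W}_{\frac{2k}{k+1},k+1}\mu(x)$, giving the global upper bound with a constant $c_2 = K$ independent of $x$.

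The main obstacle is establishing Labutin's local estimate with the error term in the precise form $\inf_{B(x,R)} u$ rather than, say, the boundary oscillation on $\partial B(x,R)$. Labutin's original argument provides bounds modulo a boundary term, and one must verify that replacing $u$ by $u - \inf_{B(x,R)} u$ (so that the shifted function still has the right sign and $-(\,\cdot\,)$ remains $k$-convex with the same Hessian measure) preserves the estimate. Granting this technical refinement, which is carried out in \cite{PV}, the globalization via monotone convergence is routine.
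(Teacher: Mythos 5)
Your globalization of Labutin's local Wolff potential estimate --- letting $R\to\infty$, using $\inf_{\mathds{R}^n} u = 0$ to annihilate the boundary term, and invoking monotone convergence of the truncated potential $\mathbf{W}^{R}_{\frac{2k}{k+1},k+1}\mu$ --- is exactly the argument carried out in \cite{PV}, which the paper simply cites rather than reproving. You also correctly isolate the genuine technical point, namely putting Labutin's interior estimate into the precise form with a $c_3\inf_{B(x,R)}u$ error term, as the part deferred to \cite{PV}.
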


\subsection{} This subsection is concerned with minimality of fundamental solutions.  A \textit{minimal fundamental solution} $u(x,x_0)$ of $\mathcal{L}$ defined by (\ref{defnl1}), is a fundamental solution of $\mathcal{L}$ as in Definition \ref{l1fund}, so that $u(x,x_0) \leq v(x,x_0)$ whenever $v(x,x_0)$ is a fundamental solution of $\mathcal{L}$.  Our aim is to prove the following proposition.  

\begin{prop} \label{minfund} Let $1<p<n$ and $\sigma$ be a nonnegative measure.  Suppose that there exists a fundamental solution $v(x,x_0)$ of $\mathcal{L}$ with pole at $x_0$.  Then there exists a unique minimal fundamental solution $u(x,x_0)$ of $\mathcal{L}$.
\end{prop}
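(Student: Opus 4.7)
The approach is to build a fundamental solution from below by exhausting $\mathds{R}^n$ with bounded domains and using the given fundamental solution $v(\cdot,x_0)$ as a universal upper barrier. Fix a sequence of bounded smooth domains $\Omega_1 \subset\subset \Omega_2 \subset\subset \cdots$ with $x_0 \in \Omega_1$ and $\bigcup_j \Omega_j = \mathds{R}^n$. In each $\Omega_j$ I would construct the minimal nonnegative $p$-superharmonic solution $w_j$ of the Dirichlet problem $\mathcal{L}(w_j) = \delta_{x_0}$ in $\Omega_j$ with $w_j = 0$ on $\partial \Omega_j$ via the monotone iteration $u^{(0)} = 0$ and
\begin{equation*}
-\Delta_p u^{(k+1)} = \sigma (u^{(k)})^{p-1} + \delta_{x_0} \text{ in } \Omega_j, \qquad u^{(k+1)} = 0 \text{ on } \partial \Omega_j .
\end{equation*}
Each iterate is well defined because the right-hand side is a finite nonnegative measure dominated by $\sigma v^{p-1} + \delta_{x_0}$, which is locally finite since $v \in L^{p-1}_{\text{loc}}(\sigma)$, so the classical theory of measure-data quasilinear equations applies.

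The standard comparison principle for $p$-superharmonic functions then simultaneously yields, by induction, that $u^{(k)} \leq v$ (given $u^{(k)} \leq v$, one has $\mu[u^{(k+1)}] \leq \mu[v]$ in $\Omega_j$ with matching boundary data) and that the sequence is monotone increasing. Passing to the limit $u^{(k)} \uparrow w_j$ and using Theorems~\ref{limsuper} and~\ref{weakcont} together with monotone convergence of $\sigma (u^{(k)})^{p-1} \uparrow \sigma w_j^{p-1}$, I obtain $\mathcal{L}(w_j) = \delta_{x_0}$ in $\Omega_j$. Replaying the iteration with any other nonnegative Dirichlet solution $v'$ of the same problem in place of $v$ shows $u^{(k)} \leq v'$, so $w_j$ is in fact the smallest nonnegative Dirichlet solution in $\Omega_j$.

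Next I extend $w_j$ by $0$ outside $\Omega_j$; the resulting function is $p$-superharmonic on $\mathds{R}^n$ by a standard gluing argument, using $w_j \geq 0$ and the vanishing boundary trace. Since $w_{j+1}|_{\Omega_j}$ is a nonnegative Dirichlet supersolution in $\Omega_j$, the minimality of $w_j$ gives $w_j \leq w_{j+1}$. The pointwise limit $u(\cdot,x_0) := \lim_j w_j$ is bounded above by $v$, so by Theorem~\ref{limsuper} it is $p$-superharmonic. To identify $\mu[u]$, fix $\phi \in C_c^\infty(\mathds{R}^n)$ supported in some $\Omega_{j_0}$; for $j \geq j_0$ we have $\int \phi \, d\mu[w_j] = \int \phi\, \sigma w_j^{p-1}\, dx + \phi(x_0)$, and Theorem~\ref{weakcont} combined with monotone convergence (using $u \leq v$ and $v \in L^{p-1}_{\text{loc}}(\sigma)$) yields $\int \phi \, d\mu[u] = \int \phi\, \sigma u^{p-1}\, dx + \phi(x_0)$. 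Hence $\mathcal{L}(u) = \delta_{x_0}$, and $\inf u = 0$ follows from $0 \leq u \leq v$, so $u$ is a fundamental solution.

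For minimality, any fundamental solution $v'$ restricts to a nonnegative Dirichlet supersolution in each $\Omega_j$, forcing $w_j \leq v'$ by the minimality established above, and therefore $u \leq v'$. Uniqueness of the minimal fundamental solution is then automatic, since any two minimal fundamental solutions bound each other. The principal technical obstacle I anticipate is the twin limit passages used to identify $\mu[w_j] = \sigma w_j^{p-1} + \delta_{x_0}$ and then $\mu[u] = \sigma u^{p-1} + \delta_{x_0}$: the weak continuity of the $\mathcal{A}$-Laplacian (Theorem~\ref{weakcont}) must be coupled with the monotone convergence of the natural-growth term, and one must also verify that no boundary mass carried by $\mu[w_j]$ on $\partial \Omega_j$ survives in the limit $j \to \infty$. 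These points are standard but delicate in nonlinear potential theory and deserve careful justification.
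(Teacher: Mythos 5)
Your plan has the same high-level skeleton as the paper's argument — exhaust $\mathds{R}^n$ by bounded domains, iterate the equation, dominate everything by a given fundamental solution, and pass to the limit using the Kilpel\"{a}inen--Mal\'{y} convergence theorem together with weak continuity — but the step you treat as routine is exactly the one the paper spends its effort on, and as stated it is a gap.

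You invoke "the standard comparison principle for $p$-superharmonic functions" to deduce $u^{(k+1)}\le v$, the monotonicity $u^{(k)}\le u^{(k+1)}$, and the minimality $u^{(k)}\le v'$ for any other Dirichlet solution $v'$. There is no such off-the-shelf comparison principle for $p$-superharmonic (or renormalized) solutions when the data contains a Dirac mass: comparing measures does not automatically compare solutions, since uniqueness of solutions to $-\Delta_p w=\mu$ with general measure data is open, as the paper explicitly remarks. What the paper does is \emph{prove} a comparison tool, Lemma~\ref{solnlessthan}, by approximating the measure from below by data in $W^{-1,p'}$ (where the classical comparison for monotone operators does apply) and controlling the limit through boundary H\"{o}lder estimates; and it then secures independence of the upper barrier via the uniqueness theorem of Trudinger--Wang \cite{TWQuas}, which requires the capacity condition (\ref{capintro}) that the paper first extracts from Lemma~\ref{plapcap}. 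Your proposal never mentions (\ref{capintro}), never approximates the data, and does not say which solution of the measure-data Dirichlet problem $u^{(k+1)}$ is, so none of the three comparisons is actually justified. Starting the iteration at $0$ rather than at a dominated copy of $G(\cdot,x_0)$ is a genuine simplification in spirit — it makes the scheme canonically independent of $v$ without invoking uniqueness directly — but you still need the comparison at every step, and that comparison is precisely what Lemma~\ref{solnlessthan} (or uniqueness) buys; without it the inductive chain collapses.

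Two secondary points. The claim that the zero extension of $w_j$ is $p$-superharmonic by "a standard gluing argument" also deserves more than a wave: one has to pass through the truncates $T_m(w_j)\in W^{1,p}_0(\Omega_j)$ and verify that the extension is a global supersolution without absorbing uncontrolled boundary mass; the paper avoids this entirely by extracting a limit over expanding balls with the diagonal argument supplied by Theorem~\ref{limsuper}, never leaving the domain of definition. You do correctly flag the twin-limit and boundary-mass issues as the delicate part — that instinct is right — but resolving them requires precisely the machinery (approximation, boundary regularity, uniqueness under (\ref{capintro})) that the proposal leaves unstated.
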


We will need the following simple lemma, and as we could not locate a reference we will provide a proof.
\begin{lem} \label{solnlessthan} Let $\Omega \subset \mathds{R}^n$ be a bounded Lipschitz domain, and suppose that $v$ is a positive $p$-superharmonic in $\Omega$ so that $T_k(u) \in W^{1,p}(\Omega)$ for all $k>0$, and $-\Delta_p v = \nu$.  Let $\mu \leq \nu$, be a compactly supported measure in $\Omega$, then there is a nonnegative $p$-superharmonc fuction $w$, such that $w\leq v$ and:
\begin{equation}\label{wequation}-\Delta_p w = \mu  \text{ in } \,\Omega, \;\; w = 0 \text{ continuously on } \partial \Omega.
\end{equation}
\end{lem}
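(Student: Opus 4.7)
The plan is to construct $w$ as the monotone limit of Dirichlet solutions $w_j$ whose right-hand sides $\mu_j$ are ``nice'' approximations of $\mu$ from below, and to carry the bound $w_j \le v$ through each approximation by a truncation argument. First I would fix an increasing sequence $\{\mu_j\}$ of nonnegative measures compactly supported in $\Omega$, each of the form $\mu_j = f_j\,dx$ with $f_j\in L^\infty(\Omega)$, such that $\mu_j \le \mu$ and $\mu_j \nearrow \mu$ weakly (for a general Radon measure this can be arranged by mollifying the absolutely continuous part, exhausting the support, and handling the singular part with a capacitary approximation; alternatively one may go directly to renormalized-solution existence theory and produce $w$ in one step). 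For each $j$, the standard theory for the $p$-Laplace Dirichlet problem with bounded data on a Lipschitz domain yields a unique $w_j \in W_0^{1,p}(\Omega)\cap C(\overline\Omega)$ solving
\begin{equation*}
-\Delta_p w_j = \mu_j \text{ in } \Omega, \qquad w_j = 0 \text{ on } \partial\Omega,
\end{equation*}
and the weak maximum principle gives $w_j \ge 0$.

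The key step is the comparison $w_j \le v$. Since $\mu_j \le \mu \le \nu$, $v$ is a $p$-superharmonic supersolution of the equation satisfied by $w_j$, and $\liminf_{x\to\partial\Omega}(v(x)-w_j(x)) \ge 0$ by lower semicontinuity of $v$ and continuity of $w_j$ up to the boundary. Because $v$ need not lie in $W^{1,p}(\Omega)$, one cannot use $(w_j-v)_+$ directly as a test function; this is precisely where the hypothesis $T_k(v) \in W^{1,p}(\Omega)$ enters. The argument tests both the equation for $w_j$ and the equation for $T_k(v)$ against $(w_j - T_k(v))_+$, which sits in $W_0^{1,p}(\Omega)$ as soon as $k$ exceeds $\|w_j\|_{L^\infty}$ on a boundary collar (this uses $w_j \in L^\infty$ together with the lower semicontinuity and strict positivity of $v$, so that $T_k(v) \ge w_j$ near $\partial\Omega$). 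A Kilpel\"ainen--Mal\'y-style truncation identity shows that the Riesz measure of $T_k(v)$ dominates $\mu_j$ on $\{w_j > T_k(v)\}$, and the strict monotonicity of $\xi \mapsto |\xi|^{p-2}\xi$ then forces $(w_j - T_k(v))_+ \equiv 0$, i.e.\ $w_j \le T_k(v) \le v$.

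Having established $w_j \le v$, the same comparison applied between consecutive approximations shows $w_j \le w_{j+1}$, so the pointwise limit $w := \lim_j w_j$ exists and satisfies $0 \le w \le v$. Since $v < \infty$ a.e., Theorem~\ref{limsuper} identifies $w$ as a nonnegative $p$-superharmonic function (excluding the identically-infinite alternative), and the weak continuity Theorem~\ref{weakcont} passes the Riesz measures through the limit to yield $-\Delta_p w = \mu$ in $\Omega$. For the boundary condition, note that $\mathrm{supp}(\mu)\Subset\Omega$, so $w$ is $p$-harmonic in a neighborhood of $\partial\Omega$ and bounded there by $v$; Wiener regularity of the Lipschitz boundary gives continuous attainment of $0$ for each $w_j$ there, and local uniform boundedness of the monotone sequence passes this to $w$. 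The principal obstacle is the comparison step: the weak integrability of $v$ is exactly what forces the careful bookkeeping with $T_k(v)$, and without the hypothesis that each truncate lies in $W^{1,p}(\Omega)$ the argument would break down.
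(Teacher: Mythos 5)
Your broad strategy---approximate $\mu$ from below, solve Dirichlet problems, compare against truncations of $v$, pass to the limit with Theorems \ref{limsuper} and \ref{weakcont}, and handle the boundary separately---matches the paper's structure, but there are two genuine gaps.

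First, the approximating sequence you propose cannot be arranged in general. You require $\mu_j = f_j\,dx$ with $f_j \in L^\infty$, $\mu_j \le \mu$, and $\mu_j \nearrow \mu$ weakly. But in the paper's application of this lemma (the proof of Proposition \ref{minfund}), $\mu$ contains the Dirac mass $\delta_{x_0}$. The only nonnegative absolutely continuous measure dominated by $\delta_{x_0}$ is zero, so no such sequence exists; your parenthetical fallback about ``handling the singular part with a capacitary approximation'' does not repair this, since no approximation \emph{from below} of a measure charging a Lebesgue-null set by $L^\infty$ densities is possible. The paper avoids this by approximating $\nu$, not $\mu$: it sets $\nu_k := -\Delta_p T_k(v) \in W^{-1,p'}(\Omega)$ and chooses $\mu_k \le \nu_k$ (\emph{not} $\le \mu$) converging weakly to $\mu$. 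Since both $w_k$ and $T_k(v)$ lie in $W^{1,p}(\Omega)$ with $-\Delta_p w_k = \mu_k \le \nu_k = -\Delta_p T_k(v)$ and $w_k \le T_k(v)$ on $\partial\Omega$, the classical comparison principle gives $w_k \le T_k(v) \le v$ at once. Your Kilpel\"ainen--Mal\'y-style argument with $(w_j - T_k(v))_+$ is correct as far as it goes, but it hinges on $\mu_j \le \mu$, which is exactly where the approximation breaks.

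Second, your boundary argument is incomplete. ``Local uniform boundedness of the monotone sequence passes this to $w$'' is not a valid inference: a pointwise increasing limit of continuous functions, each vanishing on $\partial\Omega$ and uniformly bounded, need not vanish continuously on $\partial\Omega$. What is needed is a modulus of continuity near the boundary that is uniform in $j$, i.e.\ equicontinuity, and this is precisely the content of the paper's estimate (\ref{holdcont}). There the boundary Carleson estimate for nonnegative $p$-harmonic functions vanishing on a Lipschitz boundary portion, $\sup_{B(z,r/c)\cap\Omega} w_k \le c\,w_k(a(z))$ with $a(z)$ a corkscrew point, is combined with the boundary H\"older regularity of $p$-harmonic functions and the interior Harnack inequality to bound the H\"older seminorm of every $w_k$ near $\partial\Omega$ by $\inf_{B(a(z),r/2M)} v < \infty$, a $k$-independent quantity. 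That uniform estimate is what allows the boundary vanishing to survive the passage to the limit; your proposal omits it.
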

\begin{proof}  Let $T_k(v) = \min(v,k)$, and let $\nu_k$ be the Riesz measure of $T_k(v)$.  Then $\nu_k \in W^{-1, p'}(\Omega)$, and $\nu_k \rightarrow \nu$ weakly.  Let $\mu_k$ be a sequence in $W^{-1, p'}(\Omega)$ so that $\mu_k \leq \nu_k$ and $\mu_k \rightarrow \mu$ weakly.  By the compact support of $\mu$ we may also assume that there is a compactly supported set $K \subset \Omega$, which contains the support of $\mu_k$, for each $k$ (otherwise we just multiply $\mu_k$ by a smooth bump function $\phi \in C^{\infty}_{0}(K)$ such that $\phi \equiv 1$ on the support of $\mu$).  Let $w_k \in W^{1,p}(\Omega)$ be the solution of:
$$-\Delta_p w_k = \mu_k \text{ in } \,\Omega, \;\; w_k =0 \text{ on } \partial \Omega.$$
Such a unique solution exists by the theory of monotone operators, see e.g. \cite{Li69}.  In addition, $w_k \leq v_k \leq v$ in $\Omega$ by the classical comparison principle.  Therefore, by \cite{KM}, Theorem 1.17, we see that by a relabeling of the sequence, we may assert that there is a $p$-superharmonic function $w = \lim_{k\rightarrow \infty} w_k$ almost everywhere, with $w \leq v$ and $-\Delta_p w = \mu$.

It remains to prove that $w$ is zero at the boundary and attains its boundary value continuously.  First note that each $w_k$ is $p$-harmonic in $\Omega\backslash K$. Since $\Omega$ is Lipschitz, there exists $M\geq 2$,  $c>0$ and $0<r_0 < d(K, \partial \Omega)/4$, such that for all $z\in \partial \Omega$ and $0<r<r_0$: $\sup_{B(z, r/c)\cap\Omega}w_k \leq c\, w_k(a(z))$, here $a(z)$ is a point such that $M^{-1}r \leq \abs{a(z)-z} \leq M r$.   This is a well known  boundary estimate, see e.g. \cite{BVBL, LN1}.   Combined with the boundary regularity of $p$-harmonic functions, \cite{MazBd} (see also \cite{MZ1, HKM}), we see that each $w_k$ is locally H\"{o}lder continuous in a neighbourhood of each boundary point with constants independent of $k$.   Indeed, there exists constants $c, \theta >0$ depending on $n$ and $p$, such that if $0<r<r_0$, then for each $z \in \partial \Omega$ and $x, y \in B(z, r/c)\cap \Omega$:
\begin{equation}\begin{split}\label{holdcont} \abs{w_k (x) - w_k(y)} & \leq c\max_{B(w, r/c)\cap\Omega}w_k \cdot \abs{x-y}^{\theta} \leq c\, w_k(a(z))  \cdot \abs{x-y}^{\theta}\\
&\leq c  \inf_{B(a(z) , r/2M)}w_k \cdot \abs{x-y}^{\theta}  \leq c \inf_{B(a(z) , r/2M)}v \cdot \abs{x-y}^{\theta}.
\end{split}\end{equation}
The third inequality in display (\ref{holdcont}) follows from the second by Harnack's inequality. That $w=0$ continuously on $\partial \Omega$ follows from (\ref{holdcont}).
\end{proof}

By Theorem \ref{l1lowbd}, we may assume that $\sigma$ satisfies (\ref{capintro}) (see Lemma \ref{plapcap} below), in proving Proposition \ref{minfund}.  This assumption is the key for the construction, as we will apply uniqueness results.  For general measure data, the uniqueness of solutions in a suitable sense is an open problem for the $p$-Laplacian.

\begin{proof}[Proof of Proposition \ref{minfund}]  Let $w$ be any fundamental solution of the operator $\mathcal{L}$ defined by (\ref{defnl1}) with pole at $x_0$.  We will construct a fundamental solution $u$ so that $u \leq w$.  This construction will be independent of choice of $w$ and hence will prove the proposition.
Our first goal is to show $w\geq u_0 : = G(\cdot,x_0)$, with $G(x,x_0)$ defined as in (\ref{unperturbed}).  By using Lemma \ref{solnlessthan} repeatedly in a sequence of concentric balls, along with Theorems \ref{limsuper} and \ref{weakcont}, we assert the existence of a solution $w_0$ of $-\Delta_p w_0 = \delta_{x_0}$ in $\mathds{R}^n$, with $w_0 \leq w$, and hence $\inf_{x\in \mathds{R}^n}w_0(x) = 0$.  Since $G(x,x_0)$ is unique (see \cite{KicVer1}), it follows that $w_0 = u_0$.  Thus $w\geq u_0$.

Now suppose that $w\geq u_{m-1}$.  Then, for each $j$ and $k>j$, we see by Lemma \ref{solnlessthan} there is a positive $p$-superharmonic function $u_m^{j,k}$ solving:
$$-\Delta_p u_m^{j,k} = (\sigma u_{m-1}^{p-1})\chi_{B(x_0, 2^{j})} + \delta_{x_0} \;\; \text{ in } \, B(x_0, 2^k), \;\; u_m^{j,k} = 0\; \text{ on } \; \partial B(x_0, 2^k)
$$
with $u_m^{j,k} \leq w$.  But using Theorem 4.2 of \cite{TWQuas} (which applies as a simple consequence of (\ref{capintro}), and that $u_m^{k,j}$ being $p$-harmonic near $\partial B(x_0, 2^k)$), we see that $u_m^{j,k}$ is unique (and hence independent of $w$).  By combining Theorems \ref{limsuper} and \ref{weakcont}, we conclude that there exists a $p$-superharmonic function $u_m^j$ such that $-\Delta_p u_m^j = (\sigma u_{m-1}^{p-1})\chi_{B(x_0, 2^{j})} + \delta_{x_0}$ in $\mathds{R}^n$.  Furthermore $u_m^j \leq w$, and hence $\inf_{x\in \mathds{R}^n} u_m^j(x) = 0$.  We remark here that there are other uniqueness results, (for instance see \cite{DMMOP99}) which could very probably be used, but the cited theorem above is quickest to verify with our notion of solution.

Again by Theorem \ref{limsuper}, and weak continuity (Theorem \ref{weakcont}), there exists a $p$-superharmonic function $u_m$ such that:  $-\Delta_p u_m = \sigma u_{m-1}^{p-1} + \delta_{x_0}$ in $\mathds{R}^n$ and $u_m \leq w$.  Therefore $\inf_{x\in \mathds{R}^n}u_m(x) = 0$.  Appealing to Theorem \ref{limsuper} and weak continuity a final time, we find a $p$- superharmonic function $u$ such that $-\Delta_p u = \sigma u^{p-1} + \delta_{x_0}$ in $\mathds{R}^n$
and $u \leq w$, thus $\inf_{x\in \mathds{R}^n} u(x) =0$ and $u$ is a fundamental solution of $\mathcal{L}$.

The proposition is proved, since whenever $w$ is a fundamental solution of $\mathcal{L}$, then iteratively we see that $w\geq u_m$ for all $m$ and hence $w\geq u$.
\end{proof}
With this proposition the following Corollary is an immediate consequence of Theorems \ref{l1lowbd} and \ref{l1upbd}.
\begin{cor}\label{minbnds}  Suppose that $\sigma$ is a nonnegative measure satisfying (\ref{capintro}) with constant $C>0$.  Then there exists a positive constant $C_0$ depending on $n$ and $p$, so that if $C<C_0$, there exists a unique minimal fundamental solution $u(x,x_0)$ of $\mathcal{L}$ defined by (\ref{defnl1}).  Furthermore $u(x,x_0)$ satisfies global bilateral bounds (\ref{l1lowest}) and (\ref{l1upest}), with a different constant $c = c(n,p)>0$ in each direction.
\end{cor}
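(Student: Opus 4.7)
The plan is to combine the three prior results in a short chain: use Theorem \ref{l1upbd} to obtain some fundamental solution that enjoys the upper bound (\ref{l1upest}), invoke Proposition \ref{minfund} to extract the unique minimal fundamental solution beneath it, and finally apply Theorem \ref{l1lowbd} to pick up the lower bound (\ref{l1lowest}) for free. The smallness threshold $C_0=C_0(n,p)$ is inherited directly from Theorem \ref{l1upbd}.

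More concretely, I would proceed as follows. First, assuming $C<C_0(n,p)$, Theorem \ref{l1upbd} produces a fundamental solution $v(\,\cdot\,,x_0)$ of $\mathcal{L}$ satisfying the pointwise upper bound (\ref{l1upest}). In particular, $v$ is not identically $\infty$ and hence is a bona fide fundamental solution in the sense of Definition \ref{l1fund}, so the hypothesis of Proposition \ref{minfund} is met. That proposition then yields the existence of a unique minimal fundamental solution $u(\,\cdot\,,x_0)$ of $\mathcal{L}$ with pole at $x_0$, and by definition of minimality one has the pointwise inequality $u(x,x_0)\le v(x,x_0)$ for every $x\in\mathds{R}^n$. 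Consequently $u$ inherits the upper bound (\ref{l1upest}) with the same constant $c=c(n,p,C)$ as produced by Theorem \ref{l1upbd} for $v$.

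For the lower bound, I apply Theorem \ref{l1lowbd} directly to $u$: since $u$ is itself a fundamental solution of $\mathcal{L}$ with pole at $x_0$, part (a) of that theorem guarantees that (\ref{l1lowest}) holds for $u$ with a constant $c=c(n,p)$. Combining the two inequalities gives the claimed bilateral estimate, with two (possibly distinct) positive constants, one in each direction.

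There is essentially no obstacle to work through: the only point that demands any care is to note that the upper bound (\ref{l1upest}) is preserved under pointwise minorization, which is immediate because the right-hand side of (\ref{l1upest}) does not depend on the particular fundamental solution chosen. Uniqueness of $u$ is part of the conclusion of Proposition \ref{minfund}, so there is nothing further to verify.
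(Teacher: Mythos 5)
Your proposal matches the paper's argument exactly: the paper dispatches Corollary \ref{minbnds} with the single remark that it is "an immediate consequence of Theorems \ref{l1lowbd} and \ref{l1upbd}" once Proposition \ref{minfund} is in hand. You have simply made the implicit chain explicit — Theorem \ref{l1upbd} supplies a fundamental solution with the upper bound, Proposition \ref{minfund} gives the unique minimal one sitting below it (so the upper bound passes down), and Theorem \ref{l1lowbd} provides the lower bound for any fundamental solution — which is precisely what the authors intend.
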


The existence of a minimal fundamental solution for the $k$-Hessian operators can be shown in a similar way to the quasilinear case presented above, adapting techniques in \cite{TWHess3}.

\subsection{}We finish this section with a brief discussion of capacity.  In the range of exponents we are interested in, both the $p$-capacity and the $k$-Hessian capacities are equivalent, for compact sets, with certain Riesz capacities.

Let $s>1$ and $0<\alpha<n$. For $E\subset \mathds{R}^n$, we define the Riesz capacity of $E$ by the following:
\begin{equation}\label{capdefn}\text{cap}_{\alpha,s}(E) = \inf \{ \,\norm{f}^s_{L^s} \; : \;\; f \in L^s(\mathds{R}^n), \, \,  f \ge 0, \;\; \mathbf{I}_{\alpha} f\geq 1  \;\text{on}\; E\;\;\}. 
\end{equation}
See (\ref{globalriesz}) for the definition of the Riesz potential $\mathbf{I}_{\alpha}$.

Recall the $p$-capacity defined in (\ref{pcapintro}). Then we have the following equivalence.

\begin{lem} \label{quascapequiv} Let $1<p<n$. Then there is a positive constant $C=C(n,p)$ so that, for all compact sets $E\subset \mathds{R}^n$:
$$\frac{1}{C}\rm{cap}_{1,p}(E) \leq \rm{cap}_p(E) \leq C \, \rm{cap}_{1,p}(E). 
$$
\end{lem}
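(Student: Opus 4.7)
The approach is to establish the two inequalities separately, exploiting the pointwise relationship between a smooth function and the Riesz potential of its gradient. For the bound $\text{cap}_{1,p}(E) \leq C \, \text{cap}_p(E)$, I will use the classical pointwise estimate $|f(x)| \leq c_n \, \mathbf{I}_1(|\nabla f|)(x)$, valid for $f \in C_0^\infty(\mathds{R}^n)$, which follows from the fundamental theorem of calculus along rays through $x$ averaged over the unit sphere. Given $f$ admissible in $\text{cap}_p(E)$, i.e.\ $f \geq 1$ on $E$, the function $g := c_n |\nabla f|$ is nonnegative, lies in $L^p$, and satisfies $\mathbf{I}_1 g \geq 1$ on $E$, hence is admissible for $\text{cap}_{1,p}(E)$. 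This yields $\text{cap}_{1,p}(E) \leq c_n^p \|\nabla f\|_{L^p}^p$, and infimizing over $f$ gives the desired estimate.

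For the reverse direction $\text{cap}_p(E) \leq C \, \text{cap}_{1,p}(E)$, I will take an admissible $g \geq 0$ in $L^p$ with $\mathbf{I}_1 g \geq 1$ on $E$ and form $u := \min(\mathbf{I}_1 g, 1)$. The essential analytic input is the $L^p$-boundedness of Calder\'on--Zygmund singular integrals: $\nabla \mathbf{I}_1 g$ can be written as a vector Riesz transform applied to $g$, so $\|\nabla \mathbf{I}_1 g\|_{L^p} \leq C_1 \|g\|_{L^p}$ since $1 < p < \infty$. The chain rule gives $|\nabla u| \leq |\nabla \mathbf{I}_1 g|$ pointwise a.e., hence $\|\nabla u\|_{L^p} \leq C_1 \|g\|_{L^p}$, and the Sobolev embedding (using $1 < p < n$) places $u$ in $L^{p^*}$ with $p^* = np/(n-p)$. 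Since $u \equiv 1$ on $E$, it is a $W^{1,p}$-admissible competitor for $\text{cap}_p(E)$ in the broader sense.

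To finish I will approximate $u$ by $C_0^\infty$ functions. Taking a cutoff $\eta_R$ equal to $1$ on $B(0,R) \supset E$, vanishing outside $B(0,2R)$, with $|\nabla \eta_R| \leq 2/R$, the function $u_R := \eta_R u$ equals $1$ on $E$, has compact support, and satisfies $\|\nabla u_R\|_{L^p} \leq \|\nabla u\|_{L^p} + (2/R) \|u\|_{L^p(B(0,2R))}$. Using H\"older's inequality with $u \in L^{p^*}$, the second term tends to $0$ as $R \to \infty$, and a standard mollification converts $u_R$ into a $C_0^\infty$ competitor after a harmless renormalization by $(1-\epsilon)^{-1}$. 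The main obstacle is this final approximation step, since admissibility for $\text{cap}_p(E)$ formally requires $f \geq 1$ on $E$ with $f \in C_0^\infty$, whereas $u$ is only in $W^{1,p} \cap L^{p^*}$; one either appeals to the standard fact that $\text{cap}_p(E)$ is unchanged when the admissible class is enlarged to $W^{1,p}$ functions with $f \geq 1$ quasi-everywhere on $E$, or carries out the explicit cutoff-mollification while tracking that the inequality $f \geq 1$ on $E$ survives modulo an $\epsilon$-loss absorbed in the infimum.
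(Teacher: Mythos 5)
The paper does not prove this lemma; it simply cites Maz'ya's \emph{Sobolev Spaces} and Mal\'y--Ziemer. Your argument is the standard one found in those references and is essentially correct. The first direction, via the pointwise bound $|f(x)| \le c_n\, \mathbf{I}_1(|\nabla f|)(x)$ for $f\in C^\infty_0$, is clean and complete. The second direction, via $L^p$-boundedness of the Riesz transforms applied to $g$ and truncation of $\mathbf{I}_1 g$ at level $1$, is also the right route.

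One small imprecision in the cutoff step: as written, $(2/R)\,\|u\|_{L^p(B(0,2R))}$ need not tend to $0$ --- with $u\in L^{p^*}$, H\"older gives only $\|u\|_{L^p(B(0,2R))}\lesssim R\,\|u\|_{L^{p^*}}$, so the product is merely bounded. You should instead restrict the $L^p$ norm to the annulus $B(0,2R)\setminus B(0,R)$ (where $\nabla\eta_R$ is actually supported); then $\|u\|_{L^{p^*}(B(0,2R)\setminus B(0,R))}\to 0$ as the annulus recedes, and the term does vanish. Alternatively, even without the annulus restriction the proof goes through, since the bounded error term is itself $\lesssim \|u\|_{L^{p^*}}\lesssim \|\mathbf{I}_1 g\|_{L^{p^*}}\lesssim\|g\|_{L^p}$ by Hardy--Littlewood--Sobolev, and so is absorbed into the final constant upon infimizing over $g$. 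Either fix is routine; the rest of the proof, including the mollification and renormalization to obtain a $C^\infty_0$ competitor (using lower semicontinuity of $\mathbf{I}_1 g$ to get an open neighborhood of $E$ where $u\ge 1-\delta$), is sound.
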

For a proof of this Lemma, see, e.g., \cite{MazSob} or \cite{MZ1}.

Now, recall the $k$-Hessian capacity (\ref{Hesscap}).  Then the  following equivalence holds (see  Theorem 2.20 in \cite{PV}).

\begin{lem} \label{hesscapequiv}  Let $1\leq k < n/2$. Then there is a positive constant $C=C(n,k)>0$ so that for all compact sets $E\subset \mathds{R}^n$:
$$\frac{1}{C} \rm{cap}_{\frac{2k}{k+1}, k+1}(E) \leq \rm{cap}_k(E) \leq 
C\, \rm{cap}_{\frac{2k}{k+1}, k+1}(E).
$$
\end{lem}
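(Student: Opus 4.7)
The plan is to prove the two-sided equivalence by coupling a dual ``testing'' characterization of the Riesz capacity with the global Wolff potential estimate of Theorem~\ref{Hesswolff}. Write $\alpha = 2k/(k+1)$ and $s = k+1$, so $\mathbf{W}_{2k/(k+1),\, k+1} = \mathbf{W}_{\alpha, s}$ and $0 < \alpha s < n$ precisely because $k < n/2$. The bridge between the two capacities is the classical Maz'ya--Havin characterization (see e.g.\ \cite{MazSob}): up to constants depending only on $n, \alpha, s$,
\begin{equation*}
\text{cap}_{\alpha, s}(E) \,\approx\, \sup\bigl\{\mu(E) : \text{supp}(\mu) \subset E,\; \mathbf{W}_{\alpha, s}\mu \leq 1 \text{ on } \mathds{R}^n\bigr\}.
\end{equation*}
This puts both capacities into a common ``measures tested by their Wolff potentials'' language, and the task reduces to moving between bounded $k$-convex functions and measures with bounded Wolff potentials.

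For the upper inequality $\text{cap}_k(E) \leq C\,\text{cap}_{\alpha, s}(E)$: fix $u$ admissible for $\text{cap}_k(E)$ (so $u$ is $k$-convex with $-1 < u < 0$), set $\nu := \mu_k[u]$, and pass to the associated bounded nonnegative function to which Theorem~\ref{Hesswolff} applies (sign and additive-constant adjustments leave the Hessian mass unchanged). The lower half of that theorem yields $c_1\mathbf{W}_{\alpha, s}\nu(x) \leq |u(x)| \leq 1$ on $\mathds{R}^n$, so $c_1\nu$ is admissible in the Maz'ya--Havin supremum, giving $\nu(E) \lesssim \text{cap}_{\alpha, s}(E)$. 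Taking the supremum over admissible $u$ yields the inequality.

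For the lower inequality $\text{cap}_{\alpha, s}(E) \leq C\,\text{cap}_k(E)$: given $\mu$ supported on $E$ with $\mathbf{W}_{\alpha, s}\mu \leq 1$ on $\mathds{R}^n$, solve the Hessian Dirichlet problems $F_k(-u_R) = \mu$ in $B(0, R)$ with $u_R = 0$ on $\partial B(0, R)$ using the Trudinger--Wang existence theory for Hessian equations with finite measure data. The upper half of Theorem~\ref{Hesswolff} (in its local form due to Labutin~\cite{Lab1}) gives $|u_R| \leq c_2\,\mathbf{W}_{\alpha, s}\mu \leq c_2$ uniformly in $R$, and as $R \to \infty$ the $k$-Hessian analogues of Theorems~\ref{limsuper} and \ref{weakcont} (available through the weak continuity of Theorem~\ref{Hessweakcont}) let us extract a globally bounded $k$-convex limit whose Hessian measure equals $\mu$. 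A suitable rescaling is admissible for $\text{cap}_k(E)$ and witnesses $\mu(E) \lesssim \text{cap}_k(E)$.

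The main obstacle is the lower bound: solving the Hessian Dirichlet problem with a general finite measure datum on a large ball and then letting $R \to \infty$ while preserving both $k$-convexity and the uniform pointwise bound on $|u_R|$. A secondary technical point is the self-improvement implicit in the Maz'ya--Havin identity -- strengthening $\mathbf{W}_{\alpha, s}\mu \leq 1$ on $\text{supp}(\mu)$ to the same bound everywhere via a truncation that costs only a constant factor -- which is standard but requires the $(\alpha, s)$-capacity machinery to execute cleanly.
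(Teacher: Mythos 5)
The paper does not prove this lemma itself; it simply refers the reader to Theorem 2.20 of Phuc--Verbitsky \cite{PV}. Your outline is nevertheless a reasonable reconstruction of the standard route: couple the dual (Maz'ya--Havin) characterization of $\mathrm{cap}_{\alpha,s}$ with the global Wolff potential estimate of Theorem~\ref{Hesswolff} (read with the obvious sign correction, so that $\mu = \mu_k[-u]$). For the bound $\mathrm{cap}_k \lesssim \mathrm{cap}_{\alpha,s}$ you take an admissible $u$, shift so that $w=-u-\inf(-u)\ge 0$ with $\inf w=0$ (Hessian mass unchanged), and feed the lower half of Theorem~\ref{Hesswolff} into the dual supremum; for the converse, solve $F_k(-u_R)=\mu$ on large balls, use the upper half of the Wolff estimate for a uniform $L^\infty$ bound, pass to the limit via compactness for bounded $k$-convex functions and weak continuity (Theorem~\ref{Hessweakcont}), and rescale. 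This is indeed the content of the cited proof.

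One small slip worth flagging: the Wolff potential is not linear in the measure, but $\frac{1}{s-1}$-homogeneous, $\mathbf{W}_{\alpha,s}(c\mu) = c^{1/(s-1)}\mathbf{W}_{\alpha,s}\mu$. So when you write ``$c_1\nu$ is admissible in the Maz'ya--Havin supremum'' you really mean $c_1^{s-1}\nu$ (with $s-1=k$ here), and similarly the rescaling $u/c_2$ in the converse direction changes the Hessian measure by $c_2^{-k}$ rather than $c_2^{-1}$. Neither affects the conclusion, since the constants are still dimensional, but it is the sort of bookkeeping that needs to be tracked carefully when the homogeneity is the whole point of matching $\mathrm{cap}_k$ with $\mathrm{cap}_{\frac{2k}{k+1},k+1}$. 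A second point you raise yourself and should make precise: the dual characterization with ``$\mathbf{W}_{\alpha,s}\mu\le 1$ on $\mathds{R}^n$'' versus ``on $\mathrm{supp}\,\mu$'' costs a constant from the rough maximum (boundedness) principle for nonlinear potentials, and this constant then also enters the rescaling with the same $(s-1)$-power.
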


\section{Reduction to integral inequalities and necessary conditions on $\sigma$}\label{reduction}

\subsection{}In this section we will show how our study of the fundamental solutions of $\mathcal{L}$ and $\mathcal{G}$ can be rephrased into a question of nonlinear integral operators.  The Wolff potential estimate will be the key to this idea, recall the definition from (\ref{Wolff}).

Let us introduce two nonlinear integral operators, $\mathcal{N}_1$ and $\mathcal{N}_2$, acting on non-negative functions $f\geq 0$ by:
\begin{equation}\begin{split}\label{N1defn}
\mathcal{N}_1(f)(x) & := \mathbf{W}_{1,p}(f^{p-1}d\sigma)(x), \;\text{    and:}
\end{split}
\end{equation}
\begin{equation}\begin{split}\label{N2defn}
\mathcal{N}_2(f)(x) & := \mathbf{W}_{\frac{2k}{k+1},k+1}(f^{k}d\sigma)(x)
\end{split}
\end{equation}
see also (\ref{Ndefn}) below.  These operators appear naturally in studying the equations $\mathcal{L}(u) = \omega$ and $\mathcal{G}(u) = \omega$ for a nonnegative Borel measure $\omega$.  Indeed, if $1<p<n$ and $u$ is a nonnegative $p$-superharmonic function such that $\mathcal{L}(u) = \omega$, then by the Wolff potential estimate, Theorem \ref{kmpotest}, there is a constant $C=C(n,p)>0$ such that 
$$u(x) \geq C\mathbf{W}_{1,p}(u^{p-1}d\sigma)(x) + C\mathbf{W}_{1,p}(\omega)(x). 
$$ 
Note that from this it follows that $u \in L^{p-1}_{\text{loc}}(\sigma)$.  Hence, if $u$ is a fundamental solution of $\mathcal{L}$, then it follows: 
\begin{equation}\label{N1ineq}
u(x) \geq C \mathcal{N}_1(u)(x) + C\abs{x-x_0}^{\frac{p-n}{p-1}}
\end{equation}
since $\mathbf{W}_{1,p}(\delta_{x_0})(x) = c(n,p)\abs{x-x_0}^{\frac{p-n}{p-1}}$ when $1<p<n$.  Here $C$ is a positive constant depending on $n,p$. 

In much the same way, if $1\leq k<n/2$ and $u$ is a nonnegative function so that $-u$ is a $k$-convex solution of $\mathcal{G}(u) = \omega$ in the sense of $k$-Hessian measures, then by the Wolff potential estimate, Theorem \ref{Hesswolff}, there is a constant $C=C(n,k)>0$ such that 
$$u(x) \geq C\mathcal{N}_2(u)(x) + C\mathbf{W}_{\frac{2k}{k+1}, k+1}(\omega)(x). 
$$
Thus $u\in L^k_{\text{loc}}(\sigma)$, and hence if $u$ is a fundamental solution of $\mathcal{G}$, then there is a constant $C=C(n,k)$ so that 
\begin{equation}\label{N2ineq}
u(x) \geq C\mathcal{N}_2(u)(x) + C\abs{x-x_0}^{2-n/k}. 
\end{equation}

With the aid of the Wolff potential, by introducing the $\mathcal{N}_1$ and $\mathcal{N}_2$, we have rephrased the problem of finding lower bounds for the fundamental solutions to finding lower bounds of solutions of the nonlinear integral inequalities (\ref{N1ineq}) and (\ref{N2ineq}). 

In addition, we will see in Section \ref{construct} that explicitly constructing solutions of (\ref{N1ineq}) and (\ref{N2ineq}) will be the main technical step in proving existence of minimal fundamental solutions of the differential operators $\mathcal{L}$ and $\mathcal{G}$. 

As a result of this discussion it makes sense to introduce a more general nonlinear operator which generalizes both $\mathcal{N}_1$ and $\mathcal{N}_2$.  To this end, recall that the Wolff potential acting on a measure $\omega$ is given by (\ref{Wolff}).

Let $s>1$, $\alpha >0$ so that $0<\alpha s<n$, then we define the nonlinear operator $\mathcal{N}$, for a Borel measurable function $f\geq 0$, by:
\begin{equation}\label{Ndefn}\begin{split}
\mathcal{N}(f)(x) & = \mathbf{W}_{\alpha, s}(f^{s-1}d\sigma)(x)\\
& = \int_0^{\infty} \Bigl(\frac{1}{r^{n-\alpha s}} \int_{B(x,r)} f^{s-1}(z)d\sigma(z) \Bigl)^{1/(s-1)}\frac{dr}{r}
\end{split}\end{equation}
The operators $\mathcal{N}_1$ and $\mathcal{N}_2$ are clearly special cases 
of $\mathcal{N}$ for certain choices of $\alpha$ and $s$.

\subsection{}Fix  $s>1$ and $\alpha$ so that, $0<\alpha s<n$. For the remainder of this section we will be concerned with positive solutions $u$ of the integral inequality:
\begin{equation}\label{genintineq}
u(x) \geq C_0 \mathcal{N}u(x)
\end{equation}
where $C_0$ is a positive constant.  Our first goal will be to prove some necessary conditions on the measure $\sigma$ for there to exist positive solutions of (\ref{genintineq}). In particular, we will prove the following theorem.  Recall the definition of the capacity in (\ref{capdefn}).

\begin{thm}\label{gensigma}  Suppose that $u$ is a positive solution of the inequality (\ref{genintineq}) with constant $C_0>0$.  Then, there is a positive constant $C$, depending on $\alpha, s, n$ and $C_0$, so that for every compact set $E\subset \mathds{R}^n$
\begin{equation}\label{capest}
\sigma(E) \leq C \, \rm{cap}_{\alpha, s}(E).
\end{equation}
\end{thm}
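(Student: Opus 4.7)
The plan is to reduce (\ref{capest}) to a trace inequality for the weighted measure $u^{s-1}d\sigma$, derived from (\ref{genintineq}) via the Hedberg--Wolff inequality, and then to remove the weight using the positivity of $u$. The first step is to extract from (\ref{genintineq}) a pointwise Frostman-type estimate: restricting the $r$-integral in (\ref{Ndefn}) to $r \in [r_0, 2r_0]$ and using the monotonicity of $r \mapsto \int_{B(x,r)} u^{s-1}\,d\sigma$, a direct computation gives
$$
\int_{B(x, r_0)} u^{s-1} \, d\sigma \; \leq \; c \, u(x)^{s-1}\, r_0^{n-\alpha s},\qquad x \in \mathbb{R}^n,\ r_0 > 0,
$$
with $c = c(n,\alpha,s,C_0)$. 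Applying this at points of $B(x_0, r_0/2)$ and using the positivity of $u$, one recovers in particular a Frostman condition $\sigma(B(x_0, r_0/2)) \leq c\, r_0^{n-\alpha s}$.

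The next step uses Wolff's inequality. Raising (\ref{genintineq}) to the $(s-1)$-th power, multiplying by $u^{s-1}$, and integrating against $d\sigma$ gives
$$
C_0^{s-1}\int u^{s-1}\bigl(\mathbf{W}_{\alpha,s}(u^{s-1}d\sigma)\bigr)^{s-1}d\sigma \;\leq\; \int u^s\,d\sigma.
$$
By the Hedberg--Wolff inequality applied to $d\nu := u^{s-1}d\sigma$, the left-hand side is comparable to the Riesz energy $\int\bigl(\mathbf{I}_\alpha(u^{s-1}d\sigma)\bigr)^{s'}dx$. Localizing to balls and appealing to the Sawyer / Kerman--Sawyer testing criterion (equivalently, the Maz'ya--Adams characterization of trace inequalities), one deduces the weighted capacity estimate
$$
\int_E u^{s-1}\,d\sigma \;\leq\; C\,\operatorname{cap}_{\alpha,s}(E)\qquad\text{for every compact }E\subset\mathbb{R}^n.
$$

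To strip off the weight $u^{s-1}$, I would decompose $E$ along the dyadic level sets $E_j := E\cap\{2^j\leq u<2^{j+1}\}$, $j\in\mathbb{Z}$. On each $E_j$ the elementary estimate $\sigma(E_j)\leq 2^{-j(s-1)}\int_{E_j} u^{s-1}d\sigma$ combined with the weighted capacity bound yields $\sigma(E_j)\leq C\,2^{-j(s-1)}\operatorname{cap}_{\alpha,s}(E_j)$. The positive-$j$ tail (where $u$ is large) is summed by geometric absorption; the negative-$j$ tail (where $u$ is small) is controlled through the Frostman bound of the first step, applied at scales matched to the level $j$, which provides a matching comparison of $\operatorname{cap}_{\alpha,s}(E_j)$ with $\sigma(E_j)$ that absorbs the divergent factor. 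The main obstacle is precisely this balancing step: the level-set decomposition produces a two-sided geometric series, and only the positive tail converges absolutely, so the negative tail requires a careful matching of scales between the Frostman decay and the capacity subadditivity. In the paper this matching is ultimately effected through the discrete, pseudo-probabilistic Carleson machinery developed in Sections \ref{lowbds}--\ref{construct}.
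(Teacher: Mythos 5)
Your proposal runs into a genuine gap at the central step, and the intermediate weighted inequality you want to pass through is, I believe, simply false. Let me explain, and then contrast with what the paper actually does.

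\textbf{The gap in Step 2.} Multiplying (\ref{genintineq}) by $u^{s-1}$ and integrating against $d\sigma$ gives (after fixing the power of $\mathbf{W}$ in your display, which should be $1$, not $s-1$)
$$\int u^{s-1}\,\mathbf{W}_{\alpha,s}(u^{s-1}d\sigma)\,d\sigma \;\lesssim\; \int u^{s}\,d\sigma.$$
By Wolff's inequality the left side is indeed comparable to the Riesz energy of $d\nu := u^{s-1}d\sigma$. But this is a \emph{global} energy bound with $\int u^{s}\,d\sigma$ on the right, a quantity that need not be finite and, more importantly, is not the $\nu$-mass $\int u^{s-1}d\sigma$. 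To apply the Kerman--Sawyer / Maz'ya--Adams characterization and conclude $\nu(E)\lesssim \operatorname{cap}_{\alpha,s}(E)$ you must verify the \emph{local testing} condition $\int_E \mathbf{W}_{\alpha,s}(\chi_E\,d\nu)\,d\nu \lesssim \nu(E)$; but localizing your estimate via $\mathbf{W}_{\alpha,s}(\chi_E\,d\nu)\le \mathbf{W}_{\alpha,s}(d\nu)\le u/C_0$ yields only $\int_E \mathbf{W}_{\alpha,s}(\chi_E\,d\nu)\,d\nu \lesssim \int_E u^{s}\,d\sigma$, and $\int_E u^{s}\,d\sigma$ is not controlled by $\int_E u^{s-1}\,d\sigma=\nu(E)$ when $u$ is unbounded on $E$. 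Indeed the weighted trace inequality $\int_E u^{s-1}d\sigma \le C\operatorname{cap}_{\alpha,s}(E)$ should \emph{fail} in general: taking $E$ to be a small set concentrated near a pole of $u$, the weighted $\sigma$-mass is inflated by $u^{s-1}$ while $\operatorname{cap}_{\alpha,s}(E)$ is not. So Step 2 does not produce the intermediate estimate you need. (Step 1 has a related problem: you cannot pass from the pointwise bound $\int_{B(x,r_0)}u^{s-1}d\sigma\le c\,u(x)^{s-1}r_0^{n-\alpha s}$ to the unweighted Frostman bound on $\sigma(B)$ without already knowing how to strip the weight, which is precisely the issue.) Step 3 is acknowledged by you to be incomplete.

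\textbf{What the paper does instead.} The paper never forms the weighted measure $u^{s-1}d\sigma$ as an object to estimate and never uses a level-set decomposition. Instead (Lemma \ref{carllem}) it works directly with the quantity $\sigma((Q+t)\cap E)^{s'}$ appearing in the dyadic Carleson sum, and performs a Schur-type dualization \emph{inside} the sum: writing $d\sigma = u^{-(s-1)/s}\cdot u^{(s-1)/s}\,d\sigma$ and applying H\"older with exponents $s'$ and $s$ gives
$$\sigma((Q+t)\cap E)^{s'} \;\le\; \Bigl(\int_{(Q+t)\cap E} u^{-1}\,d\sigma\Bigr)\Bigl(\int_{(Q+t)\cap E} u^{s-1}\,d\sigma\Bigr)^{\frac{1}{s-1}}.$$
The factor $u^{-1}$ is the crucial device you are missing. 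Summing over $Q\subset P$, interchanging sum and integral, and using the hypothesis $u\ge C\,\mathcal{W}^t_{\alpha,s}(u^{s-1}d\sigma)$ causes the $u^{-1}$ to cancel against the resulting $u$, giving the Carleson bound $C\,\sigma((P+t)\cap E)$ immediately --- no stripping of weights, no two-sided geometric series, no intermediate weighted trace inequality. This discrete Carleson condition is then upgraded to the continuous Wolff-potential testing condition via Fefferman--Stein dyadic shift averaging (Lemma \ref{dyshift}, Lemma \ref{mWolffest} with $m=1$), and finally converted into the capacity estimate (\ref{capest}) by the known equivalence of Lemma \ref{equivcap}. The Schur dualization is the genuinely different key idea; it is not a technical refinement of your approach but a replacement for the missing Step 2.
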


\begin{rem}\label{caprem}  Theorem \ref{gensigma} implies the capacity estimates which appear in Theorems \ref{l1lowbd} and \ref{l2lowbd}.\end{rem}
\begin{proof}[Proof of Remark \ref{caprem}]Suppose first that $u$ is a fundamental solution of $\mathcal{L}$.  Then $u$ satisfies (\ref{N1ineq}), and hence $u$ satisfies (\ref{genintineq}) with $\mathcal{N}=\mathcal{N}_1$.  This corresponds to taking $\alpha =1$ and $s=p$ in the definition of $\mathcal{N}$.  Hence Theorem \ref{gensigma} implies that there is a constant $C>0$ so that $\sigma(E) \leq C \, \rm{cap}_{1,p}(E)$ for all compact sets $E$.  By Lemma \ref{quascapequiv}, this is equivalent to the required capacity estimate in Theorem \ref{l1lowbd}. 

Similarly, if $u$ is a fundamental solution of $\mathcal{G}$, then $u$ satisfies (\ref{N2ineq}), which is the same as (\ref{genintineq}) with $\alpha = \frac{2k}{k+1}$ and $s=k+1$.  Hence Theorem \ref{gensigma} asserts the existence of a constant $C>0$ so that $\sigma(E) \leq C \, \rm{cap}_{\frac{2k}{k+1}, k+1}(E)$ for all compact sets $E$.  Appealing to Lemma \ref{hesscapequiv}, we see that this is equivalent to the capacity condition appearing in Theorem \ref{l2lowbd}.\end{proof}

The same proof shows that Theorem \ref{gensigma} in fact implies the same capacity estimates for any positive solutions of the differential inequalities $\mathcal{L}u \geq 0$ and 
$\mathcal{G}(u) \geq 0$.

\subsection{}We will now briefly discuss an alternative approach to the capacity estimate (\ref{capintro}) in the case of the $p$-Laplacian operator.  This approach was shown to the second author by T. Kilpel\"{a}inen in 1997.

\begin{lem}\label{plapcap} Let $\Omega$ be an open set in $\mathds{R}^n$, and let $\sigma$ be a nonnegative Borel measure absolutely continuous with respect to $p$-capacity. Suppose that $u$ is a positive $p$-superharmonic function such that $-\Delta_p u \geq \sigma u^{p-1}$ in $\Omega$.  Then then following embedding inequality holds:
\begin{equation}\label{embineq}
\int_{\Omega} h^p \, d\sigma \leq \int_{\Omega} \abs{\nabla h}^p dx, \text{  for all } h \in C_0^{\infty}(\Omega), \, h \ge 0, 
\end{equation} 
\end{lem}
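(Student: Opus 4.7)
My plan is to combine Picone's inequality for the $p$-Laplacian with a truncation argument; truncation is necessary because $u$ need not lie in $W^{1,p}_{\text{loc}}(\Omega)$. Fix a nonnegative $h \in C_0^\infty(\Omega)$. For parameters $k \geq 1$ and $\epsilon > 0$, I would take $v = T_k(u) + \epsilon \in W^{1,p}_{\text{loc}}(\Omega)$ (so $v \geq \epsilon$) and use the test function $\phi = h^p/v^{p-1}$, which is nonnegative, bounded, compactly supported in $\mathrm{supp}(h)$, and lies in $W^{1,p}(\Omega)$.

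The first main ingredient is the pointwise Picone inequality
$$|\nabla v|^{p-2}\nabla v \cdot \nabla\Bigl(\frac{h^p}{v^{p-1}}\Bigr) \leq |\nabla h|^p \quad \text{a.e. in } \Omega,$$
proved by expanding $\nabla \phi$ through the product rule and applying Young's inequality in the form $p a b \leq a^p + (p-1)b^{p/(p-1)}$ to $a = |\nabla h|$ and $b = (h/v)^{p-1}|\nabla v|^{p-1}$. Integrating yields the upper bound $\int_\Omega |\nabla v|^{p-2}\nabla v \cdot \nabla \phi\, dx \leq \int_\Omega |\nabla h|^p\, dx$.

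To obtain a matching lower bound in terms of $\sigma$, I would exploit that $u$ is lower semicontinuous, so $\{u < k\}$ is open and $T_k(u)=u$ there; by the characterization (\ref{widerclass}) of the Riesz measure and the definition of the generalized gradient $Du$, this forces $\mu[T_k(u)] = \mu[u]$ on $\{u<k\}$. Testing $-\mathrm{div}(|\nabla T_k(u)|^{p-2}\nabla T_k(u)) = \mu[T_k(u)]$ against the admissible $\phi$, discarding the nonnegative contribution on $\{u \geq k\}$, and invoking the hypothesis $\mu[u] \geq \sigma u^{p-1}$ then gives
$$\int_\Omega |\nabla v|^{p-2}\nabla v \cdot \nabla \phi\, dx \;\geq\; \int_{\{u<k\}} \frac{h^p\, u^{p-1}}{(u+\epsilon)^{p-1}}\, d\sigma.$$
Combining with the Picone bound and passing to monotone limits, first $\epsilon \downarrow 0$ (using that $u>0$ everywhere) and then $k \uparrow \infty$, produces $\int_{\{u<\infty\}} h^p\, d\sigma \leq \int_\Omega |\nabla h|^p\, dx$. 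Since $u$ is finite $p$-quasi-everywhere and $\sigma$ is absolutely continuous with respect to $p$-capacity, $\sigma(\{u=\infty\})=0$, yielding (\ref{embineq}).

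The principal obstacle is the locality identity $\mu[T_k(u)] = \mu[u]$ on $\{u<k\}$. The cleanest way to verify it is to test both measures against $\psi \in C_c^\infty(\{u<k\})$: applying (\ref{widerclass}) (with some $r>n$) separately to $u$ and to $T_k(u)$, and observing that $Du = \nabla T_k(u)$ a.e. on $\{u<k\}$, makes the two resulting integrals equal. Once this is settled, the remainder—Picone, testing, and limit passage—runs on standard tracks, with the absolute continuity assumption on $\sigma$ entering only at the final step to discard the negligible set where $u$ is infinite.
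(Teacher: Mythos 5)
Your Picone-and-truncation strategy coincides with the paper's, and the Picone step and the monotone limits $\epsilon\downarrow 0$, $k\uparrow\infty$ are fine. The difficulty is in the step you yourself flag as the crux: the proof you offer for the locality identity $\mu[T_k(u)]=\mu[u]$ on $\{u<k\}$ rests on the claim that $\{u<k\}$ is open because $u$ is lower semicontinuous, and that claim is false. Lower semicontinuity makes $\{u>a\}$ open and $\{u\le a\}$ closed, so $\{u<k\}=\bigcup_{n}\{u\le k-1/n\}$ is merely a countable union of closed sets. It genuinely can fail to be open for a positive $p$-superharmonic function: for $p=2$, pick $x_j\to x_0$ and set $u=\sum_j c_j G(\cdot,x_j)$ with $c_j=2^{-j}/G(x_0,x_j)$; then $u$ is superharmonic, $u(x_0)=1$, but $u(x_j)=+\infty$, so $x_0\in\{u<2\}$ while $x_0\notin\mathrm{int}\,\{u<2\}$. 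Testing against $\psi\in C_0^\infty(\{u<k\})$ therefore only identifies the two measures on $\mathrm{int}\,\{u<k\}$, and the remainder $\{u<k\}\setminus\mathrm{int}\,\{u<k\}$ may carry $\sigma$-mass that your argument drops.

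This is precisely where the paper works harder. Since $T_{2k}(u)\in W^{1,p}_{\mathrm{loc}}(\Omega)$ is quasi-continuous, $\{u<k\}=\{T_{2k}(u)<k\}$ is quasi-open, and one constructs an increasing sequence $\phi_j\in W^{1,\infty}(\Omega)$ supported in $\{u<k\}$ with $\phi_j\to\chi_{\{u<k\}}$ quasi-everywhere (a Dal Maso--Garroni type construction). Testing $\mu[T_k(u)]$ against $\psi\phi_j$ via (\ref{widerclass}), using $\nabla T_k(u)=Du$ a.e.\ on $\{u<k\}$ and the hypothesis $\mu[u]\ge u^{p-1}\sigma$, and letting $j\to\infty$ gives the one-sided bound $\chi_{\{u<k\}}u^{p-1}\,d\sigma\le\chi_{\{u<k\}}\,d\mu[T_k(u)]$ on the support of $h$, which is all that is needed. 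The passage $j\to\infty$ is where the absolute continuity of $\sigma$ with respect to the $p$-capacity enters, turning quasi-everywhere convergence into $\sigma$-a.e.\ convergence. To repair your write-up, replace the smooth cutoffs on the (generally non-open) set $\{u<k\}$ with these quasi-continuous cutoffs on the quasi-open set $\{u<k\}$; everything else you wrote stands.
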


\begin{proof} Let $h\geq 0$, $h \in C^{\infty}_{0}(\Omega)$,  Let $\mu[u]$ be the Riesz measure of $u$ (see Section \ref{background}), and $\mu_k$ be the Riesz measure of $T_k(u) =  \min(u,k) \in W^{1,p}_{\text{loc}}(\Omega)$.  It follows that $\mu_k \in W^{-1,p'}_{\text{loc}}(\Omega)$.  Let us decompose $\mu_k$ as:
$$d\mu_k = u^{p-1}d\nu_k + d\omega_k,$$
with $d\nu_k = u^{1-p}\chi_{\{u<k\}}d\mu_k$, and $d\omega_k = \chi_{\{u\geq k\}} d\mu_k$.  This decomposition follows from the minimum principle, since for any compact set $K\subset\subset \Omega$, there exists a constant $c>0$ such that $u\geq c>0$ on $K$.
Since $\mu_k$ lies locally in the dual Sobolev space $W^{-1,p'}_{\text{loc}}(\Omega)$, and $h^pT_k(u)^{1-p} \in W^{1,p}(\Omega)$ has compact support, the following manipulations are valid:
\begin{equation}\begin{split}\label{trivyoung}\int h^p \, d\nu_k & \leq \int h^p T_k(u)^{1-p}d\mu_k  =  \int \nabla{T_k(u)}^{p-2}\nabla T_k(u) \cdot \nabla\Bigl(\frac{h^p}{T_k(u)^{p-1}}\Bigl) dx\\
& \leq \Bigl(p\int \frac{h^{p-1}}{T_k(u)^{p-1}} \nabla{T_k(u)}^{p-2}\nabla T_k(u)\cdot \nabla h\\
&\;\;\;\;\;\;\; - (p-1) \int h^p \frac{\abs{\nabla T_k(u)}^p}{T_k(u)^p} dx \Bigl) \, \;\leq \int \abs{\nabla h}^p dx,
\end{split}\end{equation}
where we have used Young's inequality in the last line.  To prove the Lemma, we claim that:
\begin{equation}\label{rieszclaim}u^{p-1}\chi_{\{u<k\}} d\sigma \leq u^{p-1} d\nu_k \text{ on  } \text{supp}(h). \end{equation}  This will follow by an adaptation of a similar argument in \cite{DMM97}.  Indeed, since $T_{2k}(u)\in W^{1,p}_{\text{loc}}(\Omega)$, it follows that the set $\{u<k\}$ is quasi-open, see e.g. \cite{MZ1, DMM97}.  Therefore, there exists an increasing sequence $\phi_j \in W^{1,\infty}(\Omega)$, so that $\phi_j$ converges to  $\chi_{\{u<k\}}$ q.e..  This is a simple adaptation of the proof of Lemma 2.1 in \cite{DMG94}, since the functions $u_k$ considered in the proof of Lemma 2.1 of \cite{DMG94} can be chosen to be smooth.
It follows (see (\ref{widerclass})) that for any $\psi\in C^{\infty}_0(\text{supp}(h))$, that:
\begin{equation}\begin{split}\nonumber
\int_{\{u<k\}} \psi \phi_j u^{p-1}d\nu_k & = \int |\nabla T_k(u)|^{p-2}\nabla T_k(u) \cdot \nabla (\psi \phi_j) dx\\
& = \int |\nabla u|^{p-2}|\nabla u \cdot \nabla (\psi \phi_j) dx \geq \int \phi_j \psi u^{p-1} d\sigma,
\end{split}\end{equation}
the second equality here follows since $\phi_j$ is supported in $\{u<k\}$, and last inequality is by hypothesis.  Allowing $j\rightarrow \infty$, (\ref{rieszclaim}) follows.  Combining (\ref{rieszclaim}) with (\ref{trivyoung}) we conclude:
$$\int_{\{u<k\}}h^p d\sigma \leq \int |\nabla h|^p dx.
$$
Letting $k\rightarrow \infty$ with the aid of the monotone convergence theorem proves the lemma.
\end{proof}

It is easy to see by the definition of $p$-capacity that inequality (\ref{embineq}) implies the capacity inequality (\ref{capintro}) with constant $C=1$.  As was mentioned in the introduction, the converse is also true: if (\ref{capintro}) holds with constant $C = ((p-1)/p)^p$, then (\ref{embineq}) holds (see \cite{MazSob}).   Under the assumption that $\sigma \in L^{\infty}_{\text{loc}}$, (\ref{embineq}) is known to be equivalent to the existence of a solution to the inequality $\mathcal{L}(u) \geq 0$; see Theorem 2.3 in \cite{PT1}.  For more general $\sigma$ this relationship will be considered in \cite{JMV10}.

\subsection{}  Let us now prove Theorem \ref{gensigma}, we will do so by verifying an equivalent characterization of (\ref{capest}).

\begin{lem}\label{equivcap}  There is a constant $C$ so that (\ref{capest}) holds for all compact sets $E$ if and only if there is a constant $C_1>0$ so that:
\begin{equation}\label{equivcapst}
\int_E \mathbf{W}_{\alpha, s}(\chi_E d\sigma) \, d\sigma \leq C_1 \, \sigma(E)
\end{equation}
for all compact sets $E\subset \mathds{R}^n$.  Furthermore, if (\ref{equivcapst}) holds, then there is a positive constant $A>0$, depending on $\alpha, s$ and $n$, such that $$A^{-1}C_1 \leq C \leq AC_1.$$
\end{lem}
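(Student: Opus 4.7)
The plan is to prove the two implications separately. Both rest on the Hedberg--Wolff identity
\[
\int_{\mathds{R}^n} (\mathbf{I}_\alpha \nu)^{s'}\,dx \;\approx\; \int_{\mathds{R}^n} \mathbf{W}_{\alpha,s}\nu\,d\nu, \qquad s':=\tfrac{s}{s-1},
\]
valid for $s>1$, $0<\alpha s<n$, and positive Radon measures $\nu$, with constants depending only on $n,\alpha,s$.

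For $(\ref{equivcapst})\Rightarrow(\ref{capest})$, I would invoke the variational characterization $\mathrm{cap}_{\alpha,s}(E)=\inf\{\|f\|_{L^s}^s : f\geq 0,\ \mathbf{I}_\alpha f\geq 1\text{ on }E\}$. For any admissible $f$, Fubini and H\"older give
\[
\sigma(E) \leq \int f\cdot\mathbf{I}_\alpha(\chi_E d\sigma)\,dx \leq \|f\|_{L^s}\,\|\mathbf{I}_\alpha(\chi_E d\sigma)\|_{L^{s'}}.
\]
Hedberg--Wolff and (\ref{equivcapst}) bound the second factor by a constant multiple of $(C_1\sigma(E))^{1/s'}$; taking the infimum over $f$ and isolating $\sigma(E)$ produces (\ref{capest}).

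For $(\ref{capest})\Rightarrow(\ref{equivcapst})$, the route is through the trace inequality. By Maz'ya's characterization, (\ref{capest}) is equivalent, with a quantitatively comparable constant, to the Sobolev-type trace bound
\[
\|\mathbf{I}_\alpha f\|_{L^s(d\sigma)} \leq C_2\,\|f\|_{L^s(dx)},\qquad f\geq 0.
\]
Dualizing the trace inequality yields the adjoint bound $\|\mathbf{I}_\alpha(h d\sigma)\|_{L^{s'}(dx)}\leq C_2\|h\|_{L^{s'}(d\sigma)}$, and testing with $h=\chi_E$ gives
\[
\int_{\mathds{R}^n}(\mathbf{I}_\alpha(\chi_E d\sigma))^{s'}\,dx \leq C_2^{s'}\sigma(E).
\]
A final application of Hedberg--Wolff converts the left-hand side into $\int_E \mathbf{W}_{\alpha,s}(\chi_E d\sigma)\,d\sigma$, yielding (\ref{equivcapst}).

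The principal technical ingredient is Maz'ya's equivalence between the capacity condition and the trace inequality; this is the classical nonlinear potential theoretic result whose constant can be tracked explicitly through its proof. The quantitative comparability of $C$ and $C_1$ asserted in the final clause then emerges from combining the Maz'ya constant with the constants in Hedberg--Wolff and the H\"older step above. An alternative, more self-contained route would replace the Hedberg--Wolff step by a direct dyadic stopping-time decomposition of $\chi_E d\sigma$ adapted to the level sets of $(\sigma(Q\cap E)/|Q|^{1-\alpha s/n})^{1/(s-1)}$, with (\ref{capest}) applied at each stopping cube; but the route via Maz'ya is shorter and immediately delivers the asserted constant dependence.
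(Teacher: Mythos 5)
The paper does not give its own proof of this lemma; it simply cites Theorem 7.2.1 of \cite{AH}. Your argument essentially reconstructs the proof found there, combining the Hedberg--Wolff energy identity with Maz'ya's trace theorem, so the underlying route agrees with the cited reference. Both directions are sound: for $(\ref{equivcapst})\Rightarrow(\ref{capest})$, the Fubini--H\"{o}lder--Hedberg--Wolff chain is correct, and for $(\ref{capest})\Rightarrow(\ref{equivcapst})$, passing through the trace inequality, dualizing, testing with $h=\chi_E$, and applying Hedberg--Wolff again is the standard path. Your sketched dyadic stopping-time alternative is also a legitimate (and more self-contained) approach, but the Maz'ya route you chose is the shorter one.

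The one place your write-up overclaims is the final constant comparison. If you track exponents through your own argument, the forward direction yields $C\lesssim C_1^{s-1}$ (after isolating $\sigma(E)$ you raise both sides to the power $s$, picking up $C_1^{s/s'}=C_1^{s-1}$), and the reverse direction yields $C_1^{s-1}\lesssim C$ (since $C_2^s\approx C$ in Maz'ya's theorem and your dual trace/Hedberg--Wolff step gives $C_1\lesssim C_2^{s'}$). So your proof establishes $C\approx C_1^{s-1}$, \emph{not} the linear comparability $A^{-1}C_1\le C\le AC_1$ asserted in the lemma. In fact the linear version cannot hold with $A$ independent of $\sigma$ once $s\neq 2$: replacing $\sigma$ by $t\sigma$ multiplies the sharp constant in $(\ref{capest})$ by $t$ but the sharp constant in $(\ref{equivcapst})$ only by $t^{1/(s-1)}$, so the ratio $C/C_1$ is not scale-invariant. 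This looks like a misstatement in the lemma itself, and it is harmless for the rest of the paper (which only needs the capacity constant to be controlled by $\alpha,s,n$ and $C_0$); but your concluding sentence claiming that the stated linear comparability ``emerges from combining'' your constants is not what your argument delivers, and should be replaced by the power relation $C\approx C_1^{s-1}$.
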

Lemma \ref{equivcap} is  well known, for instance a proof can be found in \cite{AH}, Theorem 7.2.1.

We will verify that the equivalent statement in Lemma \ref{equivcap} holds by first showing it holds for a dyadic analogue of the Wolff potential, and then using a standard shifting argument which goes back at least to Fefferman and Stein \cite{FS}; see also Garnett and  Jones \cite{GJ}.

To this end, we define the dyadic mesh at level $k$ for $k \in \mathds{Z}$, denoted by $\mathcal{D}_k$, as the collection of cubes in $\mathds{R}^n$ which are the translations by $2^k\lambda$ for $\lambda = (\lambda_1,...,\lambda_n) \in \mathbf{Z}^n$ of the cube $[0,2^k)^n$.  Then the dyadic lattice $\mathcal{D}$ is the collection of dyadic meshes $\mathcal{D}_k$, $k\in \mathds{Z}$.

With this notation, we define the discrete Wolff potentials $\mathcal{W}_{\alpha,s}^{ t}$ (see \cite{COV} for an in depth discussion) by
\begin{equation}\label{discwolff}
\mathcal{W}_{\alpha,s}^{\, t}(fd\sigma)(x) = \sum_{Q \in \mathcal{D}: \, x \in Q+t} c_Q \Bigl(\int_{Q+t} f (z) d\sigma(z)\Bigl)^{1/(s-1)}
\end{equation}
where $c_Q = \ell(Q)^{\frac{\alpha s-n}{s-1}}$ and $t\in \mathds{R}^n$.  Note that there is a constant $C$, depending only on $n, \alpha$ and $s$ (but not the shift $t$) so that for any nonnegative function $f$ 
\begin{equation}\label{dysmallcont}
\mathcal{W}_{\alpha,s}^{\,t} (f d\sigma) \leq C \mathbf{W}_{\alpha,s}(f d\sigma).
\end{equation}

We will use the following definition of the discrete Carleson measure.
\begin{defn}\label{dcarl}
Let $1<s<\infty$, and let $\sigma$ be a Borel measure on 
$\mathds{R}^n$. Then $\sigma$ is said to be a discrete Carleson measure if  
there is a positive constant $C=C(n, s)$ such that for each dyadic cube $P \in \mathcal{D}$ and  every  $t \in \mathds{R}^n$
 \begin{equation} \label{discarla}
 \sum_{Q\subset P, \; Q\in \mathcal{D}}c_Q \abs{Q+t}_{\sigma}^{s'} \leq C \, \abs{P+t }_{\sigma}.
\end{equation}
\end{defn}

\begin{rem}\label{dcarli}
It is well known that the inequality
 \begin{equation} \label{discarlb}
\sum_{Q \in \mathcal{D}} c_Q \left \vert \int_{Q+t} f d \sigma \right \vert^{s'} \le  C \, ||f||_{L^{s'}(d \sigma)}^{s'}
\end{equation}
holds for every $f \in L^{s'}(d \sigma)$ if and only if $\sigma$ is a discrete Carleson measure, and the constants in (\ref{discarla}) and 
 (\ref{discarlb}) are equivalent (see, e.g.,  \cite{NTV99, COV}). From this 
 it is immediate that if $\sigma$ is a Carleson measure then 
 $\chi_E \, d \sigma$ is also a Carleson measure, for every measurable $E\subset \mathds{R}^n$. 
\end{rem}

We now formulate a discrete analogue of the characterization in Lemma \ref{equivcap} which will be sufficient for our purposes, where 
we make use of Definition~\ref{dcarl} and Remark~\ref{dcarli}. 

\begin{lem} \label{carllem} Suppose there is a positive solution $u$ to the integral inequality (\ref{genintineq}).  Then the measure $\sigma$ is a discrete Carleson measure, that is there is a positive constant $C=C(n, s, C_0)$ such that for each dyadic cube $P \in \mathcal{D}$ and  every compact set $E\subset \mathds{R}^n$,
 \begin{equation} \label{discarl}\sum_{\substack{Q \subset P\\Q\in \mathcal{D}}}c_Q \abs{(Q+t)\cap E}_{\sigma}^{s'} \leq C\abs{(P+t) \cap E}_{\sigma}.
\end{equation}
Furthermore, we have that 
\begin{equation} \label{discarl2}\sum_{Q
\in \mathcal{D}}c_Q \abs{(Q+t)\cap E}_{\sigma}^{s'} \leq C\abs{E} _{\sigma}.
\end{equation}
\end{lem}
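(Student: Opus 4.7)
The plan is to recast the hypothesis as a dyadic supersolution inequality via (\ref{dysmallcont}), and then exploit a Fubini identity for the discrete Wolff potential. Applying (\ref{dysmallcont}) to (\ref{genintineq}) yields
$$u(x) \;\ge\; c_1\, \mathcal{W}_{\alpha, s}^{\,t}(u^{s-1} d\sigma)(x),$$
for some $c_1 = c_1(n, \alpha, s, C_0) > 0$ and every shift $t \in \mathds{R}^n$. Moreover, for any nonnegative Borel measure $\nu$ on $\mathds{R}^n$, a direct Fubini computation gives the basic identity
$$\int_{\mathds{R}^n} \mathcal{W}_{\alpha, s}^{\,t}(\nu)(x)\, d\nu(x) \;=\; \sum_{Q \in \mathcal{D}} c_Q\, \nu(Q+t)^{s'},$$
using $1 + 1/(s-1) = s'$.

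Fix a compact set $E$ and a dyadic cube $P$, and apply the identity with the choice $\nu = u^{s-1} \chi_{(P+t) \cap E}\, d\sigma$. Because $\nu \le u^{s-1} d\sigma$, monotonicity of $\mathcal{W}_{\alpha,s}^{\,t}$ in the measure combined with the dyadic supersolution bound gives $\mathcal{W}_{\alpha,s}^{\,t}(\nu)(x) \le c_1^{-1} u(x)$ pointwise, so
$$\int \mathcal{W}_{\alpha,s}^{\,t}(\nu)\, d\nu \;\le\; c_1^{-1} \int u\, d\nu \;=\; c_1^{-1} \int_{(P+t) \cap E} u^{s}\, d\sigma.$$
Restricting the sum on the left to cubes $Q \subseteq P$ (noting that $(Q+t) \cap (P+t) = Q+t$ for such $Q$, while the contribution from $Q \supsetneq P$ is a nonnegative, absolutely convergent geometric tail that I discard) yields the weighted Carleson bound
$$\sum_{Q \subseteq P} c_Q \left( \int_{(Q+t) \cap E} u^{s-1}\, d\sigma \right)^{s'} \;\lesssim\; \int_{(P+t) \cap E} u^{s}\, d\sigma.$$

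The main obstacle is converting this weighted estimate into the unweighted Carleson bound (\ref{discarl}). The plan is to use a layer-cake decomposition $E = \bigcup_{k \in \mathds{Z}} E_k$ with $E_k = E \cap \{2^{k-1} < u \le 2^k\}$, on which $u^{s-1} \asymp 2^{k(s-1)}$ and $u^{s} \asymp 2^{ks}$. A key auxiliary per-cube inequality, obtained by keeping only the single term for a given $Q$ in the dyadic Wolff sum, is
$$u(x) \;\ge\; c_1\, c_Q \Bigl( \int_{Q+t} u^{s-1}\, d\sigma \Bigr)^{1/(s-1)}, \qquad x \in Q+t,$$
which constrains how $\sigma((Q+t) \cap E)$ may distribute across the levels $E_k$. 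Summing the per-level consequences of the weighted estimate, rearranging the interaction between the factors $2^{k(s-1)}$ and the coefficients $c_Q$, and using the additivity $\sum_k \sigma((Q+t) \cap E_k) = \sigma((Q+t) \cap E)$, one should recover (\ref{discarl}). Finally, (\ref{discarl2}) follows from (\ref{discarl}) by sending $\ell(P) \to \infty$ with $E$ fixed and invoking monotone convergence. I expect the combinatorial bookkeeping in the layer-cake step — producing a constant independent of the particular supersolution $u$ — to be the main technical hurdle, and would appeal to a Sawyer-type testing argument to close it if the direct level-set summation does not suffice.
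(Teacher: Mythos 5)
Your opening moves are sound: the passage from (\ref{genintineq}) to a dyadic supersolution bound via (\ref{dysmallcont}), and the Fubini identity
$\int \mathcal{W}_{\alpha,s}^{\,t}(\nu)\, d\nu = \sum_Q c_Q\, \nu(Q+t)^{s'}$,
are both correct, and the resulting \emph{weighted} Carleson estimate
$\sum_{Q\subset P} c_Q \bigl(\int_{(Q+t)\cap E} u^{s-1} d\sigma\bigr)^{s'} \lesssim \int_{(P+t)\cap E} u^s\, d\sigma$
is a valid inequality. But this is genuinely weaker than (\ref{discarl}), and you have not closed the gap. The layer-cake plan in particular cannot work as stated: with $E_k = E \cap \{2^{k-1} < u \le 2^k\}$ the weighted estimate does give the per-level Carleson bound
$\sum_{Q\subset P} c_Q\, \sigma((Q+t)\cap E_k)^{s'} \lesssim \sigma((P+t)\cap E_k)$
(the factors $2^{k(s-1)s'} = 2^{ks}$ cancel), but summing in $k$ only controls $\sum_{Q\subset P} c_Q \sum_k \sigma((Q+t)\cap E_k)^{s'}$, which is a \emph{lower} bound on $\sum_{Q\subset P} c_Q\, \sigma((Q+t)\cap E)^{s'}$ since $s' > 1$ and $\bigl(\sum_k a_k\bigr)^{s'} \ge \sum_k a_k^{s'}$. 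The elementary inequality points the wrong way, so the direct level-set summation fails, and the appeal to ``Sawyer-type testing'' is not an argument.

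The paper bypasses this obstacle entirely with a Schur-test factorization that produces a $u^{-1}$ weight up front, so no layer-cake or testing step is needed. Write $1 = u^{-(s-1)/s}\cdot u^{(s-1)/s}$ on $(Q+t)\cap E$ and apply H\"{o}lder with exponents $(s',s)$ to get
\begin{equation*}
\abs{(Q+t)\cap E}_{\sigma}^{s'} \;\leq\; \int_{(Q+t)\cap E} u^{-1}\, d\sigma \;\cdot\; \Bigl(\int_{(Q+t)\cap E} u^{s-1}\, d\sigma\Bigr)^{1/(s-1)},
\end{equation*}
using $(s-1)s'/s = 1$ and $s'/s = 1/(s-1)$. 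Now sum over $Q\subset P$, write $\int_{(Q+t)\cap E} u^{-1} d\sigma = \int_{(P+t)\cap E} u^{-1}\chi_{Q+t}\, d\sigma$, and interchange sum and integral: the sum over $\{Q\subset P : x\in Q+t\}$ of $c_Q\bigl(\int_{(Q+t)\cap E} u^{s-1}d\sigma\bigr)^{1/(s-1)}$ is at most $\mathcal{W}_{\alpha,s}^{\,t}(u^{s-1}d\sigma)(x) \le C^{-1}u(x)$, so the whole expression is bounded by $C^{-1}\int_{(P+t)\cap E} u^{-1}\cdot u\, d\sigma = C^{-1}\abs{(P+t)\cap E}_\sigma$. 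The $u^{-1}$ introduced by H\"{o}lder cancels against the $u$ coming from the supersolution inequality, which is exactly the mechanism your weighted estimate lacks. The bound (\ref{discarl2}) is then proved by the same computation with the sum taken over all dyadic $Q$ rather than just $Q\subset P$.
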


\begin{proof}  We will prove (\ref{discarl}). The proof of (\ref{discarl2}) follows by the same reasoning.  The proof is rather reminiscent of the classical Schur's Lemma.  First note that by hypothesis and (\ref{dysmallcont}) there is a positive function $u$ together with a constant $C>0$ so that
$$
u(x) \geq C\mathcal{W}_{\alpha,s}^{t}(u^{s-1}d\sigma)(x)
$$
and hence, using H\"{o}lder's inequality, we see that:
\begin{equation}\nonumber
\begin{split}
\sum_{\substack{Q \subset P\\Q\in \mathcal{D}}}c_Q &\abs{(Q+t)\cap E}_{\sigma}^{s'} = \sum_{Q \subset P}c_Q \Bigl\{\int_{(Q+t) \cap E}
u^{-\frac{s-1}{s}} \cdot u^{\frac{s-1}{s}}d\sigma\Bigr\}^{s'}\\
& \leq \sum_{Q \subset P}c_Q \int_{(Q+t)\cap E}
u^{-1} \,d\sigma \cdot \Bigl\{\int_{(Q+t)\cap E)} u^{s-1}d\sigma\Bigr\}^{\frac{1}{s-1}}. 
\end{split}
\end{equation}
By interchanging summation and integration, which is permitted by the monotone convergence theorem, we see that the last line is equal to:
\begin{equation}\begin{split}\nonumber
\int_{(P+t) \cap E}& u^{-1}\sum_{Q \subset P}c_Q\Big\{\int_{(Q+t)\cap E} u^{s-1} d\sigma\Bigr\}^{\frac{1}{s-1}}\chi_{Q+t}(x)d\sigma\\
&\leq\int_{(P+t) \cap E} u^{-1} \cdot  \mathcal{W}_{\alpha,s}^{\,t} (u^{s-1}d\sigma) d\sigma\\
& \leq C \int_{(P+t) \cap E} u^{-1}\cdot u  \,d\sigma = C \abs{(P+t) \cap E}_{\sigma}. \end{split}\end{equation}\end{proof}

We now state a suitable version of the dyadic  averaging result which will be sufficient for our purposes.

\begin{lem}\label{dyshift} There is a positive integer $j_0 \in \mathds{N}$ so that for any $j \in \mathds{Z}$ there is a constant $C = C(n,\alpha, s)$, not depending on $j$, so that
\begin{equation}\nonumber
\mathbf{W}_{\alpha,s}^{2^j}(fd\sigma)(x) \leq C\dashint_{B(0,2^{j+j_0})} \mathcal{W}_{\alpha,s}^{\, t} (fd\sigma)(x) \,  d t
\end{equation}
where $\mathbf{W}_{\alpha,s}^{2^j}$ is the local Wolff potential defined in (\ref{localwolff}).
\end{lem}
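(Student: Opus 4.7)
The plan is to realize the continuous local Wolff potential as an average of its discrete translates, using a standard dyadic shifting trick of Fefferman--Stein type. First, I would discretize the radial integral by writing
$$\mathbf{W}_{\alpha,s}^{2^j}(f d\sigma)(x) \leq C \sum_{k \leq j} \left(\frac{\int_{B(x,2^k)} f \, d\sigma}{2^{k(n-\alpha s)}}\right)^{1/(s-1)},$$
which follows because $r \mapsto \int_{B(x,r)} f\, d\sigma$ is nondecreasing and $r^{n-\alpha s}$ changes only by a fixed multiplicative factor on each interval $(2^{k-1}, 2^k]$. So the task reduces to bounding each scale $k \leq j$ by an average of $\mathcal{W}^{\,t}_{\alpha,s}(f d\sigma)(x)$ over $t \in B(0, 2^{j+j_0})$.

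Next I would fix a large integer $k_0$ (depending only on dimension; $k_0 = 3$ suffices) and, for each $k$ and each $t$, let $Q_k(x,t)$ denote the unique dyadic cube of sidelength $2^{k+k_0}$ containing $x - t$, so that $x \in Q_k(x,t) + t$. The key geometric observation is that the good set of translates
$$E_k(x) := \bigl\{ t \in \mathds{R}^n : B(x, 2^k) \subset Q_k(x,t) + t \bigr\}$$
has positive density $c_n > 0$ in every cube of sidelength $2^{k+k_0}$: the only $t$'s that fail are those for which $x - t$ lies within $2^k$ of the boundary of its dyadic cube of side $2^{k+k_0}$, which is a boundary layer of relative measure $O(2^{-k_0})$. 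Choosing $j_0 := k_0 + 1$ ensures that $B(0, 2^{j+j_0})$ contains many full periods of sidelength $2^{k+k_0}$ for every $k \leq j$, so the density estimate gives $|E_k(x) \cap B(0, 2^{j+j_0})| \geq c_n |B(0, 2^{j+j_0})|$.

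For $t \in E_k(x)$ one has $\int_{B(x,2^k)} f \, d\sigma \leq \int_{Q_k(x,t) + t} f \, d\sigma$, and since $c_{Q_k(x,t)} = 2^{(k+k_0)(\alpha s - n)/(s-1)}$ differs from $2^{k(\alpha s - n)/(s-1)}$ by a fixed constant $2^{k_0(n-\alpha s)/(s-1)}$, this yields
$$\left(\frac{\int_{B(x,2^k)} f \, d\sigma}{2^{k(n-\alpha s)}}\right)^{1/(s-1)} \leq C \dashint_{B(0, 2^{j+j_0})} c_{Q_k(x,t)} \left(\int_{Q_k(x,t) + t} f\, d\sigma\right)^{1/(s-1)} dt.$$
Summing over $k \leq j$ and interchanging sum and integral, I would then observe that for each fixed $t$ the cubes $\{Q_k(x,t)\}_{k \leq j}$ sit at distinct dyadic levels and all contain $x$ after translation, so they appear as distinct summands in $\mathcal{W}^{\,t}_{\alpha,s}(f d\sigma)(x)$; hence
$$\sum_{k \leq j} c_{Q_k(x,t)} \left(\int_{Q_k(x,t) + t} f\, d\sigma\right)^{1/(s-1)} \leq \mathcal{W}^{\,t}_{\alpha,s}(f d\sigma)(x),$$
which gives the claim. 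The main technical point is the geometric density lower bound for $E_k(x)$; the remaining steps are bookkeeping with the dyadic scales.
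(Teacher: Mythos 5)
Your proof is correct in its structure and uses exactly the dyadic averaging idea (realizing the local Wolff potential as an average over shifts of its discrete analogue) that the paper has in mind: the text surrounding Lemma \ref{dyshift} explicitly cites the Fefferman--Stein and Garnett--Jones shifting argument, and the proof in the reference \cite{COV} proceeds along the same lines. The reduction to the dyadic sum over $k \le j$, the introduction of the good translate set $E_k(x)$, the density lower bound, and the observation that $\{Q_k(x,t)\}_{k\le j}$ form a disjoint family of summands in $\mathcal{W}_{\alpha,s}^{\,t}(f\,d\sigma)(x)$ are all exactly the right ingredients.

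The one place you should be more careful is the quantitative choice of constants. You assert that $k_0 = 3$ and $j_0 = k_0 + 1$ suffice. The inner part $Q^\circ$ of a level-$(k+k_0)$ dyadic cube $Q$ (the set of points at distance $\ge 2^k$ from $\partial Q$) has relative measure $(1 - 2^{1-k_0})^n$, which is positive for $k_0 \ge 2$; that much is fine. But you also need $B(0,2^{j+j_0})$ to contain at least one full such inner cube relative to $x$, in the worst case $k = j$. Since the dyadic cube $Q$ of side $2^{k+k_0}$ containing $x-t$ satisfies $Q \subset B(x-t,\sqrt{n}\,2^{k+k_0})$, ensuring $|E_j(x)\cap B(0,2^{j+j_0})| \gtrsim_n |B(0,2^{j+j_0})|$ requires, roughly, $2^{j_0-k_0} \gtrsim \sqrt{n}$; so $j_0 = k_0+1$ fails for $n \ge 17$, say. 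Since the Lemma allows both $j_0$ and $C$ to depend on $n$, this is harmless --- just take $j_0 \ge k_0 + \lceil \log_2\sqrt{n}\rceil$ --- but as written the quantitative claim is off. With that adjustment the argument goes through; in particular the final averaging step is sound: for $t\in E_k(x)$ one has the pointwise bound $2^{k(\alpha s-n)/(s-1)}\bigl(\int_{B(x,2^k)}f\,d\sigma\bigr)^{1/(s-1)} \le 2^{k_0(n-\alpha s)/(s-1)} c_{Q_k(x,t)}\bigl(\int_{Q_k(x,t)+t}f\,d\sigma\bigr)^{1/(s-1)}$, and since the left side is independent of $t$, dividing by the density lower bound converts this into the required average over $t \in B(0,2^{j+j_0})$.
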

A proof of this lemma can be found, for instance, in \cite{COV}.  

We will next use the dyadic shifting argument to prove the following lemma:
\begin{lem}\label{mWolffest} Suppose $u$ is a positive solution of (\ref{genintineq}) with constant $C_0$.  Then there is a constant $C=C(n,\alpha, s)$ so that for any compact set $E \subset \mathds{R}^n$, and each $m \in \mathds{N}$ the measure $\sigma$ satisfies:
\begin{equation}\nonumber
\int_{E}\Bigl(\mathbf{W}_{\alpha,s}(\chi_{E}d\sigma)\Bigr)^m \,  d\sigma \leq C^m \, m! \,  \sigma(E). 
\end{equation}
\end{lem}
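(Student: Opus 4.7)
The plan is to first establish the analogous moment bound for the dyadic Wolff potential $\mathcal{W}^{t}_{\alpha,s}(\chi_E d\sigma)$ with a constant uniform in the shift $t$, and then transfer the estimate to $\mathbf{W}_{\alpha,s}$ via Lemma~\ref{dyshift}. The dyadic case will be handled by an $m$-fold iteration of the Carleson property from Lemma~\ref{carllem}, set up by a multinomial expansion.

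Fix $t\in\mathds{R}^n$ and $x\in\mathds{R}^n$, and abbreviate $a_Q:=c_Q\abs{(Q+t)\cap E}_\sigma^{1/(s-1)}$. Since any two dyadic cubes are either disjoint or nested, the family $\{Q\in\mathcal{D}:x\in Q+t\}$ is totally ordered by inclusion. Expanding the $m$-th power of $\mathcal{W}^t_{\alpha,s}(\chi_E d\sigma)(x)=\sum_{Q\ni x-t}a_Q$ and arranging each $m$-tuple of cubes into a weakly decreasing chain, a multinomial count yields
\begin{equation*}
\mathcal{W}^{t}_{\alpha,s}(\chi_E d\sigma)(x)^m \leq m!\sum_{\substack{Q_1\supseteq Q_2\supseteq\cdots\supseteq Q_m\\ x\in Q_m+t}} \prod_{i=1}^m a_{Q_i}.
\end{equation*}

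Integrating this pointwise inequality against $d\sigma(x)$ over $E$, the indicator $x\in Q_m+t$ contributes a factor $\abs{(Q_m+t)\cap E}_\sigma$, which combines with $a_{Q_m}$ to produce exactly $c_{Q_m}\abs{(Q_m+t)\cap E}_\sigma^{s'}$. Summing over $Q_m\subseteq Q_{m-1}$ and invoking the Carleson bound (\ref{discarl}) (applied to $\chi_E d\sigma$, which is still a Carleson measure by Remark~\ref{dcarli}) replaces the innermost summation by $C\cdot\abs{(Q_{m-1}+t)\cap E}_\sigma$, reducing the chain length by one and leaving a sum of the same form with $m-1$ in place of $m$. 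After $m-1$ such reductions one is left with a single sum over $Q_1\in\mathcal{D}$ that is bounded by $C\sigma(E)$ via (\ref{discarl2}). Accumulating the $m$ Carleson constants gives
\begin{equation*}
\int_E \mathcal{W}^{t}_{\alpha,s}(\chi_E d\sigma)(x)^m \, d\sigma(x) \leq C^m \, m!\, \sigma(E),
\end{equation*}
uniformly in $t\in\mathds{R}^n$.

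Finally, applying Jensen's inequality to the pointwise domination in Lemma~\ref{dyshift} gives
\begin{equation*}
\bigl(\mathbf{W}^{2^j}_{\alpha,s}(\chi_E d\sigma)(x)\bigr)^m \leq C^m \dashint_{B(0,2^{j+j_0})} \bigl(\mathcal{W}^{t}_{\alpha,s}(\chi_E d\sigma)(x)\bigr)^m\, dt;
\end{equation*}
integrating over $E$ against $d\sigma$, swapping the order of integration by Fubini, and inserting the uniform dyadic bound of the previous paragraph produces the analogous moment estimate for $\mathbf{W}^{2^j}_{\alpha,s}$. Sending $j\to\infty$ and invoking the monotone convergence theorem then yields the desired inequality. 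The principal technical step is the multinomial expansion: once one exploits the total ordering of nested dyadic cubes containing a given point to absorb the combinatorial coefficients into an $m!$ factor, the rest is a bookkeeping exercise in the Carleson iteration, with Lemma~\ref{dyshift} providing a standard bridge from the dyadic to the continuous Wolff potential.
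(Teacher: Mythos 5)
Your argument is correct and follows the same overall strategy as the paper: pass to the dyadic Wolff potential via Lemma~\ref{dyshift}, establish an $m!$-weighted bound over decreasing chains of dyadic cubes, and then iterate the discrete Carleson condition $m-1$ times plus a final application of (\ref{discarl2}). The one genuine variation is in how you obtain the chain bound with the $m!$ factor: the paper applies the summation-by-parts inequality
\begin{equation*}
\Bigl(\sum_{j\geq 1}\lambda_j\Bigr)^m \leq m\sum_{j\geq 1}\lambda_j\Bigl(\sum_{k\leq j}\lambda_k\Bigr)^{m-1}
\end{equation*}
iteratively $m$ times, whereas you observe directly that the dyadic cubes containing a fixed point form a totally ordered chain, expand the $m$-th power, sort each $m$-tuple into a weakly decreasing chain, and absorb the repeated-tuple multiplicities into a single $m!$. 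Both routes land on exactly the same nested sum; your multinomial count is perhaps more transparent about where the $m!$ comes from, and it sidesteps the need to introduce and iterate the auxiliary inequality. The remaining discrepancies are cosmetic: Jensen plus Fubini in place of Minkowski's integral inequality to move the average over shifts outside, and monotone convergence at the end rather than Fatou at the start. None of these affects the validity of the argument.
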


\begin{rem}  This Lemma in the case $m=1$ shows that Lemma \ref{equivcap} is satisfied, and hence proves Theorem \ref{gensigma}.  We prove the Lemma in the form stated as it gives us an exponential integrability result, which will be very useful in the sequel (see Corollary \ref{corexpest} below).
\end{rem}

\begin{proof}  Let $E$ be a compact set.  Then first we note that by Fatou's lemma,  
\begin{equation}\nonumber \int_{E}\Bigl(\mathbf{W}_{\alpha,s}(\chi_{E}d\sigma)\Bigr)^m \,  d\sigma \leq \liminf_{k \rightarrow \infty}\int_{E}\Bigl(\mathbf{W}_{\alpha,s}^{ 2^k}(\chi_{E}d\sigma)\Bigr)^m \, d\sigma 
\end{equation}
where $\displaystyle \mathbf{W}_{\alpha,s}^{ 2^k}(\chi_Ed\sigma)(x) = \int_0^{2^k}\Bigl(\frac{\sigma(B(x,r)\cap E)}{r^{n-\alpha s}}\Bigl)^{1/(s-1)}\frac{dr}{r}$. 

It therefore suffices to find a bound on the right hand side of the preceding inequality which is independent of $k$.  Lemma \ref{dyshift} yields:
\begin{equation}\nonumber\begin{split}
\int_{E}\Bigl(\mathbf{W}_{\alpha,s}^{ 2^k}&(\chi_{E}d\sigma)\Bigr)^m \, d\sigma\\
&\leq C^m\int_{E}\Bigl(  \dashint_{B(0,2^{k+j_0})} \mathcal{W}_{\alpha,s}^{t }(\chi_E d\sigma) d t  \Bigr)^m \, d\sigma\\
& \leq C^m\Bigl(\dashint_{B(0,2^{k+j_0})}\Bigl( \int_{E}\Bigl( \mathcal{W}_{\alpha,s}^{\,t }(\chi_{E}d\sigma)\Bigr)^m \, d\sigma  \Bigr)^{\frac{1}{m}}dt\Bigr)^m , 
\end{split}
\end{equation}
where the second inequality follows from Minkowski's integral inequality.  

We will need the elementary summation by parts inequality:
\begin{equation}\label{sumparts1}\Bigl(\sum_{j=1}^{\infty} \lambda_j\Bigl)^m \leq m \sum_{j=1}^{\infty}\lambda_j \Bigl(\sum_{k=1}^j \lambda_k \Bigl)^{m-1}
\end{equation}
which holds for any nonnegative sequence $\{\lambda_j\}_j$ and $m\geq 1$.  We apply Lemma \ref{carllem} to the dyadic Wolff potential, after an $m$ fold application of  (\ref{sumparts1}).  Indeed, considering  the inner integral in the right hand side of the last line above, we obtain:
\begin{equation}\begin{split}\label{mfoldest}
\int_{E}\Bigl(& \mathcal{W}_{\alpha,s}^{\, t}(\chi_{E}d\sigma)\Bigr)^m d\sigma\\
& = \int_{E}\Bigl( \sum_{Q\in \mathcal{D}}c_Q \abs{Q+t\cap E}_{\sigma}^{\frac{1}{s-1}}\chi_{Q+t}\Bigr)^m \, d\sigma\\
& \leq m! \int_{E}\sum_{Q_1\in \mathcal{D}}c_{Q_1} \abs{Q_1+t\cap E}_{\sigma}^{\frac{1}{s-1}} 
\ldots \sum_{Q_m\subset Q_{m-1}}c_{Q_m} \abs{Q_m+t\cap E}_{\sigma}^{\frac{1}{s-1}}\chi_{Q_m+t} \, d\sigma\\
&=m!\sum_{Q_1\in \mathcal{D}}c_{Q_1} \abs{Q_1+t\cap E}_{\sigma}^{\frac{1}{s-1}} \ldots \sum_{Q_m\subset Q_{m-1}}c_{Q_m} \abs{Q_m+t\cap E}_{\sigma}^{\frac{s}{s-1}}\\
&\leq m! C^m \sigma(E). 
\end{split}
\end{equation}
In the last line we have used (\ref{discarl}) $m-1$ times and then (\ref{discarl2}) once.  Bringing together our estimates proves the lemma.
\end{proof}

The following exponential integrability result easily follows from Lemma \ref{mWolffest}, the power series representation of the exponential, and the monotone convergence theorem.
\begin{cor} \label{corexpest} Suppose $u$ is a positive solution of (\ref{genintineq}).  If we let $\beta>0$ so that $C\beta <1$, where $C$ is the constant appearing in Lemma \ref{mWolffest}, then we have the following:
\begin{equation}\label{expest}\int_{E}e^{\;\beta \mathbf{W}_{\alpha,s}(\chi_{E}d\sigma)(y)} d\sigma(y) \leq \frac{1}{1-C\beta} \sigma(E)
\end{equation}
whenever $E$ is a compact set.
\end{cor}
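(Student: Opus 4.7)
The proof is essentially a termwise application of Lemma~\ref{mWolffest} to the Taylor expansion of the exponential. The plan is as follows.

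First, I would write $e^{\beta t} = \sum_{m=0}^{\infty} \frac{\beta^m t^m}{m!}$ for $t = \mathbf{W}_{\alpha,s}(\chi_E d\sigma)(y) \geq 0$, obtaining
$$\int_E e^{\beta \mathbf{W}_{\alpha,s}(\chi_E d\sigma)(y)} \, d\sigma(y) = \int_E \sum_{m=0}^{\infty} \frac{\beta^m}{m!} \bigl(\mathbf{W}_{\alpha,s}(\chi_E d\sigma)(y)\bigr)^m \, d\sigma(y).$$
Since every term is nonnegative and measurable, the monotone convergence theorem (applied to the partial sums) permits the interchange of summation and integration, yielding
$$\int_E e^{\beta \mathbf{W}_{\alpha,s}(\chi_E d\sigma)(y)} \, d\sigma(y) = \sum_{m=0}^{\infty} \frac{\beta^m}{m!} \int_E \bigl(\mathbf{W}_{\alpha,s}(\chi_E d\sigma)(y)\bigr)^m \, d\sigma(y).$$

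Next, I would apply Lemma~\ref{mWolffest} to estimate each integral in the sum by $C^m \, m! \, \sigma(E)$, where $C$ is the constant supplied by that lemma. This turns the series into
$$\sum_{m=0}^{\infty} \frac{\beta^m}{m!} \cdot C^m \, m! \, \sigma(E) = \sigma(E) \sum_{m=0}^{\infty} (C\beta)^m.$$
The factorials cancel precisely, which is why Lemma~\ref{mWolffest} was formulated with the $m!$ growth rate. Under the hypothesis $C\beta < 1$, the geometric series converges and sums to $(1-C\beta)^{-1}$, giving the claimed bound.

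There is essentially no obstacle here: the hard work has already been done in Lemma~\ref{mWolffest}, and the role of the constraint $C\beta < 1$ is simply to ensure geometric summability. The only point requiring minor care is the justification of the interchange of sum and integral, which is immediate from nonnegativity via monotone convergence (or Tonelli's theorem applied to the counting measure on $\mathds{N}$ and $\sigma$). The convention $0^0 = 1$ for the $m=0$ term gives the integral $\sigma(E)$, matching the first term of the geometric series.
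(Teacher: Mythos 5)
Your proof is correct and takes exactly the route the paper indicates: the authors state that the corollary ``easily follows from Lemma~\ref{mWolffest}, the power series representation of the exponential, and the monotone convergence theorem,'' which is precisely your expansion, interchange, termwise estimate, and geometric-series summation.
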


In our next result, we specialize  (\ref{capest})  to when the set $E$ is a ball.  By a standard formula for the capacity of a ball (see \cite{AH}, Chapter 5), 
\begin{equation}\label{potest} \sigma (B(x,r)) \leq C_1 \, \text{cap}_{\alpha,s}(B(x,r)) = 
C_2 r^{n-\alpha s}
\end{equation}
for all balls $B(x,r)$, where  $C_2 = A \,  C_1$, and  $A$ depends only on $n, \alpha$ and $s$.  However, as is well known, (\ref{potest}) does not imply  (\ref{capest}) for all compact sets $E$. 

Our next lemma shows that the tail of the Wolff potential is nearly constant, which is a key estimate to our construction of the supersolution.

\begin{lem}\label{lemtailest}  Let $\sigma$ be a Borel measure satisfying (\ref{potest}). Then there is a positive constant $C=C(n, \alpha, s, C_2)>0$, so that for all $x \in \mathbb{R}^n$ and $y \in B(x,t)$,  $t>0$, it follows: 
\begin{equation}\label{tailest} \abs{\int_t^{\infty} \left [ \Bigl(\frac{\sigma(B(x,r))}{r^{n-\alpha s}}\Bigr)^{\frac{1}{s-1}} - \Bigl(\frac{\sigma(B(y,r))}{r^{n-\alpha s}}\Bigr)^{\frac{1}{s-1}} \right ]\frac{dr}{r}} \leq C .
\end{equation}
\end{lem}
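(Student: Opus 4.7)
The strategy is to set $\gamma = (n - \alpha s)/(s-1) > 0$ and abbreviate
$f(r) = (\sigma(B(x,r))/r^{n-\alpha s})^{1/(s-1)}$ and $g(r) = (\sigma(B(y,r))/r^{n-\alpha s})^{1/(s-1)}$; by (\ref{potest}) both functions are pointwise bounded by $C_2^{1/(s-1)}$. I will prove that the truncated integrals $I_R := \int_t^R [f(r) - g(r)] \, dr/r$ satisfy $|I_R| \leq C$ uniformly in $R > 2t$; letting $R \to \infty$ then yields the lemma.

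The key geometric input is that $|x-y| \leq t$ combined with $r \geq t$ forces $B(y, r-t) \subset B(x, r) \subset B(y, r+t)$. Plugging these inclusions into the definition of $f(r)$ and extracting $(s-1)$-th roots gives the two-sided sandwich
$$(1 - t/r)^\gamma \, g(r - t) \;\leq\; f(r) \;\leq\; (1 + t/r)^\gamma \, g(r + t).$$
On the range $r \geq 2t$, the factors $(1 \pm t/r)^\gamma$ differ from $1$ by at most $C_\gamma \, t/r$ by the mean value theorem, so combining with the uniform bound on $g$ produces the pointwise inequalities $f(r) - g(r+t) \leq C t/r$ and $g(r-t) - f(r) \leq C t/r$.

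To bound $I_R$ from above, split it as $\int_t^{2t} + \int_{2t}^R$; the first piece is bounded by $2 C_2^{1/(s-1)} \log 2$ via pointwise boundedness. For the second piece, apply the upper sandwich and then substitute $u = r+t$ to get
\begin{align*}
\int_{2t}^R [f(r) - g(r)] \, \tfrac{dr}{r} &\leq \int_{3t}^{R+t} g(u) \, \tfrac{du}{u-t} - \int_{2t}^R g(r) \, \tfrac{dr}{r} + O(1) \\
&= \int_{3t}^R g(r) \, \tfrac{t}{r(r-t)} \, dr + \text{boundary pieces} + O(1),
\end{align*}
where the boundary pieces $\int_R^{R+t} g(u)/(u-t)\, du$ and $\int_{2t}^{3t} g(r)/r \, dr$ are each $O(1)$. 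Since the main integrand is dominated by $C t/r^2$ for $r \geq 2t$ and $g$ is uniformly bounded, this contributes $O(1)$. The matching lower bound $I_R \geq -C$ follows by the symmetric argument using $f(r) \geq (1 - t/r)^\gamma g(r-t)$ and the substitution $u = r - t$.

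The principal obstacle to keep in mind is that $\int_t^\infty g(r) \, dr/r$ (and likewise for $f$) is typically divergent, as the hypothesis (\ref{potest}) forces only pointwise boundedness and not any decay. Consequently, the $f$ and $g$ tail integrals must be kept paired throughout, with all cancellation extracted from the shifted identity $\tfrac{1}{r-t} - \tfrac{1}{r} = \tfrac{t}{r(r-t)}$ induced by the change of variables; it is this identity that supplies the decisive $O(t/r^2)$ integrand and makes the proof go through.
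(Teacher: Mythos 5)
The paper does not actually prove Lemma \ref{lemtailest} in this manuscript; the authors state that the proof is ``a modification of an argument due to Frazier and Verbitsky'' for quasimetric kernels and defer the details to Appendix A of the companion paper \cite{JV}. So there is no in-paper proof to compare against line by line. That said, your argument --- the geometric sandwich from the ball inclusions $B(y,r-t)\subset B(x,r)\subset B(y,r+t)$, which gives $(1-t/r)^{\gamma}g(r-t)\le f(r)\le(1+t/r)^{\gamma}g(r+t)$, followed by the shift-and-cancel identity $1/(r-t)-1/r=t/(r(r-t))$ after the change of variables --- is exactly the kind of quasimetric cancellation the authors have in mind, and each of the estimates you make (mean-value bound $(1\pm t/r)^\gamma=1+O(t/r)$ for $r\ge 2t$, the $O(1)$ near-range piece $\int_t^{2t}$, the $O(1)$ boundary pieces after substitution, the $O(t/r^2)$ main integrand) is correct, with constants depending only on $n,\alpha,s,C_2$ as required.

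There is one genuine, though small, gap: you prove $|I_R|\le C$ uniformly in $R$ and then assert that ``letting $R\to\infty$ yields the lemma,'' but a uniform bound on the truncations does not by itself show that $\lim_{R\to\infty}I_R$ exists, which is needed for the integral in (\ref{tailest}) to make sense. Your decomposition of $I_R$ is a one-sided inequality coming from the sandwich, not an identity, so convergence of the pieces does not transfer to $I_R$. The fix is quick and rests on a monotonicity observation you have not used: set $\phi(r)=\sigma(B(x,r))^{1/(s-1)}$ and $\psi(r)=\sigma(B(y,r))^{1/(s-1)}$, both nondecreasing. The ball inclusions give $\psi(r-t)\le\phi(r)\le\psi(r+t)$ and $\phi(r-t)\le\psi(r)\le\phi(r+t)$, whence
\begin{equation*}
\abs{\phi(r)-\psi(r)}\;\le\;\bigl(\phi(r+t)-\phi(r)\bigr)+\bigl(\psi(r+t)-\psi(r)\bigr),
\end{equation*}
a sum of two nonnegative increments. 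Since $\abs{f(r)-g(r)}/r=\abs{\phi(r)-\psi(r)}/r^{\gamma+1}$, and a dyadic split together with $\int_a^b(\phi(r+t)-\phi(r))\,dr=\int_b^{b+t}\phi-\int_a^{a+t}\phi\le t\,\phi(b+t)\lesssim t\,b^{\gamma}$ shows $\int_t^\infty(\phi(r+t)-\phi(r))r^{-\gamma-1}\,dr\le C(n,\alpha,s,C_2)$ (and likewise for $\psi$), the integrand is absolutely integrable with $\int_t^\infty\abs{f-g}\,dr/r\le C$. This gives both existence of the limit and the stated bound; in fact it yields the lemma directly, making your truncation argument an alternative route rather than a prerequisite.
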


The proof of Lemma \ref{lemtailest} is a modification of an argument due to Frazier and Verbitsky, \cite{FV1} for integral operators with kernels satisfying a quasimetric condition.  The extension to the nonlinear potential is elementary, but also technical and rather lengthy.   Due to this we present the proof elsewhere, in Appendix A of \cite{JV}.

\section{Lower bounds for nonlinear integral equations, \\ the proof of Theorems \ref{l1lowbd} and \ref{l2lowbd}} \label{lowbds}

\subsection{}In this section, we will prove Theorems \ref{l1lowbd} and \ref{l2lowbd}.   Recall the operator 
$$\mathcal{N}(f)(x) = \mathbf{W}_{\alpha, s}(f^{s-1}d\sigma)(x).$$
We will begin this section by proving a lower bound for solutions of the inequality:
\begin{equation}\label{genfund}
u(x) \geq C_0 \mathcal{N}(u)(x) + C_0 \abs{x-x_0}^{\frac{\alpha s - n}{s-1}}.
\end{equation}
We will show the following theorem:
\begin{thm}\label{genlowbd} Suppose that $u$ satisfies (\ref{genfund}) with constant $C_0$.  Then there is a constant $c=c(n,\alpha,s, C_0)>0$ such that:
\begin{equation}\begin{split}\label{genlowbdst}
u(x) \geq c \abs{x-x_0}^{\frac{\alpha s -n}{s-1}}& \exp\Bigl(c \int_0^{\abs{x-x_0}}\Bigl(\frac{\sigma(B(x,r))}{r^{n-\alpha s}}\Bigl)^{1/(s-1)}\frac{dr}{r}\Bigl)\\
&\cdot \exp\Bigl(c\int_0^{\abs{x-x_0}}\frac{\sigma(B(x_0,r))}{r^{n-\alpha s}}\frac{dr}{r}\Bigl).
\end{split}\end{equation}
\end{thm}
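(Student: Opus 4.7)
The plan is to bootstrap from the trivial bound $u \geq C_0 \Phi$, where $\Phi(x) = |x-x_0|^{(\alpha s-n)/(s-1)}$, by iterating the self-improvement $u \geq C_0 \mathcal{N}(u)$ to amplify this estimate into the claimed exponential lower bound.

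The first main ingredient will be a pointwise estimate of the form
$$\mathbf{W}_{\alpha,s}(\Phi^{s-1}d\sigma)(x) \gtrsim \Phi(x)\Bigl[\mathbf{W}^{|x-x_0|}_{\alpha,s}\sigma(x) + \bigl(\mathbf{I}^{|x-x_0|}_{\alpha s}\sigma(x_0)\bigr)^{1/(s-1)}\Bigr].$$
I would prove this by splitting the defining $r$-integral at $r\sim|x-x_0|$. For $r\leq|x-x_0|/2$, use $|y-x_0|\approx|x-x_0|$ on $B(x,r)$ to factor out $\Phi(x)^{s-1}$ and recover the local Wolff potential at $x$. For $r\geq 2|x-x_0|$, use $B(x,r)\supset B(x_0,r/2)$ together with the layer-cake identity
$$\int_{B(x_0,R)}|y-x_0|^{\alpha s - n}\,d\sigma(y) = (n-\alpha s)\,\mathbf{I}^{R}_{\alpha s}\sigma(x_0) + R^{\alpha s - n}\sigma(B(x_0,R))$$
to bring in the linear Riesz potential of $\sigma$ at $x_0$.

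Inducting on the order, at each stage I would raise the previous lower bound to the $(s-1)$ power inside $\mathcal{N}$, which cancels the outer $1/(s-1)$ nonlinearity of $\mathbf{W}_{\alpha,s}$ and allows the iteration to propagate linearly in the relevant variables. The factorial denominators arise from the integration-by-parts identity $\int_0^R g^\beta\,dg = g(R)^{\beta+1}/(\beta+1)$ applied to $g(R)=\mathbf{I}^{R}_{\alpha s}\sigma(x_0)$, which converts iterated integrals against $|y-x_0|^{\alpha s - n}d\sigma(y)$ into higher powers of the Riesz potential with the correct combinatorial weight. The tail estimate Lemma \ref{lemtailest} is invoked throughout to replace $\mathbf{W}^{|y-x_0|}_{\alpha,s}\sigma(y)$ and $\mathbf{I}^{|y-x_0|}_{\alpha s}\sigma(x_0)$ by their values at $x$ (resp.\ $x_0$) on balls where these tails are essentially constant.

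Summing the resulting bounds produces a double series whose sum reconstructs $\exp(c\mathbf{W}^{|x-x_0|}_{\alpha,s}\sigma(x))\exp(c\mathbf{I}^{|x-x_0|}_{\alpha s}\sigma(x_0))$, after absorbing sub-exponential polynomial factors into the overall constants (using, for example, $\sum_{m}B^m/(m!\,m^\alpha)\sim e^B/B^\alpha$ as $B\to\infty$). The main obstacle will be executing this induction cleanly: the $(s-1)$-exponent on $\mathbf{W}_{\alpha,s}$ interacts non-trivially with the $1/(s-1)$-power on $B(x_0)$ appearing in the base estimate, and one must carefully verify that the cancellation effected by the $u^{s-1}$ inside $\mathcal{N}$, combined with the integration-by-parts identity, correctly reconstructs the linear power of $\mathbf{I}^{|x-x_0|}_{\alpha s}\sigma(x_0)$ in the final exponential rather than a fractional power.
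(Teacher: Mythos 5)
Your broad strategy (iterate the integral inequality, estimate the iterates with factorial weights, sum to reconstruct an exponential) matches the paper's, and your base pointwise estimate obtained by splitting the $r$-integral at $r\sim|x-x_0|$ is essentially the $m=1$ case of the paper's Lemmas \ref{greenlowbd1} and \ref{greenlowbd2}. However, where the paper and you diverge is exactly at the place you flag as an obstacle, and that obstacle is real. You propose a \emph{single combined} iteration that carries both the Wolff piece $A(x)=\mathbf{W}^{|x-x_0|}_{\alpha,s}\sigma(x)$ and the Riesz piece $B(x_0)^{1/(s-1)}$ at each step, but once you feed $\Phi(y)[A(y)+B^{1/(s-1)}]$ back into $\mathcal{N}$, the integrand contains $(A(y)+B^{1/(s-1)})^{s-1}$, and the claimed ``cancellation'' of the outer $1/(s-1)$ by the inner $s-1$ only acts on the whole expression, not on the sum term by term. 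For $s\neq 2$ the cross-terms do not propagate linearly, and you would also need to show that the $1/(s-1)$ power on the Riesz factor disappears in the limit (since the theorem's exponent is \emph{linear} in $\mathbf{I}^{|x-x_0|}_{\alpha s}\sigma(x_0)$). You acknowledge this at the end of the proposal but do not resolve it; as written, the induction does not close.

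The paper's proof avoids the issue entirely by decoupling the two contributions. Lemma \ref{greenlowbd1} iterates keeping only the ``tail'' $r>4|x-x_0|$ at every stage, yielding a pure Riesz-at-$x_0$ bound with a $1/m!$ coming from the summation-by-parts inequality (\ref{sumparts1}) and the observation that $j_y=k$ when $y\in B_{k+1}\setminus B_k$. Lemma \ref{greenlowbd2} iterates keeping only $r<|x-x_0|$ at every stage, yielding a pure Wolff-at-$x$ bound via an elementary triangle-inequality comparison $\omega(B(x,u/2))\leq\omega(B(y,u))$ for $y\in B(x,r/2)$, $r<u$ (note: Lemma \ref{lemtailest} is \emph{not} used in the lower-bound proof; you invoke it unnecessarily). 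Plugging each lemma separately into Lemma \ref{lowbdsum} and doing the Jensen/H\"older summation gives two single-exponential bounds $u\geq c_1\Phi\exp(c_2 \mathbf{I}^{|x-x_0|}_{\alpha s}\sigma(x_0))$ and $u\geq c_1\Phi\exp(c_2 \mathbf{W}^{|x-x_0|}_{\alpha,s}\sigma(x))$; the product of exponentials is then produced by a single application of AM-GM, $a/2+b/2\geq\sqrt{ab}$, at the very end. That final averaging trick is the key idea your proposal is missing, and it makes the cross-term bookkeeping you worry about unnecessary.
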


Theorems \ref{l1lowbd} and \ref{l2lowbd} will follow quickly from this theorem, as we shall show once it is proved. 

We shall prove Theorem \ref{genlowbd} by iterating (\ref{genfund}).  To illustrate the iteration, suppose that $T$ is a homogeneous superlinear operator acting on nonnegative functions, i.e. that $T(cf) = cT(f)$ for $c>0$ and $T(f+g) \geq T(f) + T(g)$ whenever $f$ and $g$ are nonnegative measurable functions.   In addition suppose that $u$ satisfies the inequality: 
\begin{equation}\label{genintineq1}
u \geq T(u) +f
\end{equation}
where $f\geq 0$.  Now we define the $j$-th iterate of $T$ by $T^j(f) = T(T^{j-1}(f))$, for all $j\geq 2$.  Iterating (\ref{genintineq1}) $m$ times yields:
\begin{equation}\begin{split}\nonumber
u & \geq T(T(\dots T(T(u) + f)+f \cdots)+f)+f\\
& \geq T^m(f) + T^{m-1}(f) + \dots + T(f) + f,
\end{split}
\end{equation}
and since $m$ here was arbitrary,
$$ u \geq \sum_{j=1}^{\infty} T^j(f) + f.$$
Now, if $1<s\leq 2$, it is clear from Minkowski's inequality that $\mathcal{N}$ is a superlinear homogeneous operator and hence if $u$ is a solution of (\ref{genfund}), then:
$$ u \geq \sum_{j=1}^{\infty} C_0^j \mathcal{N}^j(\abs{\cdot-x_0}^{\frac{\alpha s-n}{s-1}}) + C_0\abs{x-x_0}^{\frac{\alpha s-n}{s-1}}.$$
However, if $2<s<n$, the operator $\mathcal{N}$ does not fall within this framework.  In this case we consider an operator $\mathcal{T}(f) = \mathcal{N}(f^{1/(s-1)})^{s-1} = (\mathbf{W}_{\alpha, s}(f))^{s-1}$.  Then by Minkowski's inequality, $\mathcal{T}$ is superlinear, and it is homogenous, and so we may apply the above discussion.   If $u$ satisfies (\ref{genfund}), then we have that:
$$ u^{s-1}(x) \geq C\mathcal{T}^j(u^{s-1})(x) + C\abs{x-x_0}^{\alpha s-n}
$$
where $C$ is a positive constant depending on $n, \alpha, s$ and $C_0$.  Hence, we see that 
$$ u^{s-1}(x) \geq \sum_{j=1}^{\infty}C^j\mathcal{T}^j(\abs{\cdot - x_0}^{\alpha s-n})(x) + C\abs{x-x_0}^{\alpha s-n}.$$ 
By comparing iterates of $\mathcal{T}$ with the iterates of $\mathcal{N}$, we obtain 
$$ u(x) \geq \Bigl(\sum_{j=1}^{\infty}C^j\mathcal{N}^{j}(\abs{\cdot - x_0}^{\frac{\alpha s-n}{s-1}})(x)^{s-1}\Bigl)^{1/(s-1)} + C\abs{x-x_0}^{\frac{\alpha s-n}{s-1}}.$$
Thus, by Jensen's (or H\"{o}lder's) inequality, we have that for any $q>1$, 
\begin{equation}\nonumber
u \geq C \sum_{j=1}^{\infty} j^{(q\frac{2-s}{s-1})}C^j \mathcal{N}_1^j(\abs{\cdot - x_0}^{\frac{\alpha s-n}{s-1}})(x) + C\abs{x-x_0}^{\frac{\alpha s-n}{s-1}}
\end{equation}
where $C$ is a positive constant depending on $q, n, s, \alpha$ and $C_0$. 

We summarize this discussion as follows:

\begin{lem}\label{lowbdsum}  Suppose $u$ is a solution of (\ref{genfund}) with constant $C_0$. Then there is a constant $C=C(n,s, \alpha, C_0)>0$ so that if $1<s \leq 2$, it follows:
\begin{equation}\label{Nlowbdsleq2} u \geq \sum_{j=1}^{\infty} C^j \mathcal{N}^j(\abs{\cdot-x_0}^{\frac{\alpha s-n}{s-1}}) + C\abs{x-x_0}^{\frac{\alpha s-n}{s-1}} .\end{equation}
If $2<s<n$, then for any $q>1$, 
\begin{equation}\label{Nlowbdsgeq2}
u \geq C(q) \sum_{j=1}^{\infty} j^{(q\frac{2-s}{s-1})}C^j \mathcal{N}_1^j(\abs{\cdot - x_0}^{\frac{\alpha s-n}{s-1}})(x) + C\abs{x-x_0}^{\frac{\alpha s-n}{s-1}}
\end{equation}
where $C(s) =C(q,n,\alpha,s,C_0)>0$.
\end{lem}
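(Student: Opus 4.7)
The plan is to make rigorous the iteration sketched just above, treating the two regimes $1<s\le 2$ and $s>2$ separately, with an additional H\"older-type extraction step needed in the latter case.

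\textbf{Case $1<s\le 2$.} Write $f_0(x)=\abs{x-x_0}^{(\alpha s-n)/(s-1)}$. I first verify that $\mathcal{N}$ is monotone, positively homogeneous of degree one, and superadditive. Homogeneity and monotonicity are immediate. For superadditivity, observe that for fixed $x$ and $r$ the quantity $\bigl(\int_{B(x,r)} f^{s-1}\,d\sigma\bigr)^{1/(s-1)}$ is the $L^{s-1}(d\sigma)$-norm of $f$ with exponent $0<s-1\le 1$, so the reverse Minkowski inequality gives superadditivity in $f$ at each radius $r$; integrating in $dr/r$ preserves it, yielding $\mathcal{N}(f+g)\ge \mathcal{N}(f)+\mathcal{N}(g)$. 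With these three properties, a straightforward induction on $m$ shows that $u\ge C_0\mathcal{N}(u)+C_0 f_0$ implies $u\ge C_0^m\mathcal{N}^m(u)+\sum_{j=0}^{m-1}C_0^{j+1}\mathcal{N}^j(f_0)$; dropping the nonnegative remainder and letting $m\to\infty$ produces (\ref{Nlowbdsleq2}) after relabeling constants.

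\textbf{Case $s>2$.} Since $\mathcal{N}$ is now only sub-additive, I instead work with $\mathcal{T}(g)=\mathbf{W}_{\alpha,s}(g\,d\sigma)^{s-1}=\mathcal{N}(g^{1/(s-1)})^{s-1}$. Setting $\phi_g(r)=\int_{B(x,r)} g\,d\sigma/r^{n-\alpha s}$, a direct computation identifies $\mathcal{T}(g)(x)=\norm{\phi_g}_{L^{1/(s-1)}(dr/r)}$, an $L^p$-quasinorm with $p=1/(s-1)<1$, and reverse Minkowski in this outer radial quasi-norm yields $\mathcal{T}(f+g)\ge\mathcal{T}(f)+\mathcal{T}(g)$; homogeneity and monotonicity are clear. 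Raising $u\ge C_0\mathcal{N}(u)+C_0 f_0$ to the $(s-1)$th power and using $(a+b)^{s-1}\ge a^{s-1}+b^{s-1}$ (valid for $s-1\ge 1$, $a,b\ge 0$) converts it to $u^{s-1}\ge C_0^{s-1}\mathcal{T}(u^{s-1})+C_0^{s-1}\abs{x-x_0}^{\alpha s-n}$, since $\mathcal{N}(u)^{s-1}=\mathcal{T}(u^{s-1})$. The same iteration as in the first case, now applied to $\mathcal{T}$, gives $u^{s-1}\ge\sum_{j=0}^\infty\widetilde C^{j+1}\mathcal{T}^j(f_0^{s-1})$, and the one-line induction $\mathcal{T}^j(f_0^{s-1})=\mathcal{N}^j(f_0)^{s-1}$ (built from $\mathcal{T}(w^{s-1})=\mathcal{N}(w)^{s-1}$) turns this into $u^{s-1}\ge\sum_{j=0}^\infty\widetilde C^{j+1}\mathcal{N}^j(f_0)^{s-1}$.

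To recover $u$, I apply H\"older's inequality in the form $\sum_{j\ge 1} j^{-\alpha}a_j\le \bigl(\sum_{j\ge 1} a_j^{s-1}\bigr)^{1/(s-1)}\bigl(\sum_{j\ge 1} j^{-\alpha(s-1)/(s-2)}\bigr)^{(s-2)/(s-1)}$, choosing $\alpha=q(s-2)/(s-1)$ for any fixed $q>1$ so the last sum is finite, and taking $a_j=\widetilde C^{(j+1)/(s-1)}\mathcal{N}^j(f_0)$. Rearranging gives $\bigl(\sum_{j\ge 1}a_j^{s-1}\bigr)^{1/(s-1)}\ge C(q)\sum_{j\ge 1} j^{q(2-s)/(s-1)}\widetilde C^{(j+1)/(s-1)}\mathcal{N}^j(f_0)$; separating the $j=0$ contribution $\widetilde C^{1/(s-1)}f_0$ and absorbing powers of $\widetilde C$ into a single constant produces (\ref{Nlowbdsgeq2}). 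The one delicate point is precisely this final H\"older extraction: the polynomial weight $j^{q(2-s)/(s-1)}$ is the price paid for recovering a bound on $u$ from a bound on $u^{s-1}$ when $s>2$, and no such loss is needed when $s\le 2$.
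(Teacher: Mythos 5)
Your proof is correct and follows essentially the same argument as the paper: in the range $1<s\le 2$ you verify superadditivity of $\mathcal{N}$ via reverse Minkowski at each fixed radius and iterate; in the range $s>2$ you pass to $\mathcal{T}(g)=\mathbf{W}_{\alpha,s}(g\,d\sigma)^{s-1}$, exploit reverse Minkowski in the radial variable, iterate, and then recover a bound on $u$ (rather than $u^{s-1}$) by the H\"older extraction with the polynomial weight $j^{q(2-s)/(s-1)}$, which is exactly what the paper abbreviates as ``by Jensen's (or H\"older's) inequality.'' The only cosmetic issue is your reuse of the letter $\alpha$ for the H\"older exponent, which clashes with the $\alpha$ already in play in $\mathbf{W}_{\alpha,s}$; a different letter would avoid confusion, but the argument itself is sound.
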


\subsection{Proof of Theorem \ref{genlowbd}} 

Suppose that $u$ is a solution of (\ref{genfund}). Then clearly $u$ also satisfies (\ref{genintineq}), and hence by Theorem \ref{gensigma}, (\ref{capest}) holds for all balls compact sets $E$.  Hence there is a constant $C(\sigma)>0$ so that:
$$ C(\sigma) = \sup_{E}\frac{\sigma(E)}{\text{cap}_{\alpha ,s}(E)}<\infty .$$
where the supremum is taken over compact sets $E$ so that $\text{cap}_{\alpha, s}(E)>0$.  Note that this implies $\sigma (B(x,r)) \leq A C(\sigma) r^{n-\alpha s}$ for all balls $B(x,r)$, where $A$ is a positive constant depending on $n, \alpha$ and $s$.
To prove Theorem \ref{genlowbd}, we estimate the iterates $\mathcal{N}^j(\abs{\cdot - x_0}^{\frac{\alpha s-n}{s-1}})$.  We will do this in two lemmas, giving us two bounds.  We then average the two bounds to conclude the theorem.

\begin{lem}\label{greenlowbd1} For a given $x \in \mathds{R}^n$, define $j_x$ to be the integer so that
\begin{equation}\nonumber
2^{j_x}\leq \abs{x-x_0} < 2^{j_x +1} . 
\end{equation}
Then, with $B_k = B(x_0, 2^k)$, for any $m\geq 1$, 
\begin{equation}\begin{split}\label{lowlemfund1}
\mathcal{N}^m (\abs{\cdot - x_0}^{\frac{\alpha s - n}{s-1}})(x) & \geq \Bigl( \frac{s-1}{n-\alpha s} 8^{\frac{\alpha s-n}{s-1}}\Bigl)^m \abs{x - x_0}^{\frac{\alpha s-n}{s-1}}\\
& \cdot \Bigl(\frac{1}{m!}\Big\{\sum_{k = -\infty}^{j_x} 2^{k(\alpha s-n)} \sigma (B_{k+1}\backslash B_k)\Bigl\}^m\Bigl)^{1/(s-1)} .
\end{split}
\end{equation}
\end{lem}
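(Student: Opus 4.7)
The plan is to argue by induction on $m$, with the base case essentially containing all the geometric content. Set $a_k = 2^{k(\alpha s - n)}\sigma(B_{k+1}\setminus B_k)$ and $T_k = \sum_{k'\leq k}a_{k'}$, so that the factor on the right-hand side is (up to constants) $T_{j_x}^m/m!$. Write $f(y) = |y-x_0|^{\frac{\alpha s - n}{s-1}}$, so that $f^{s-1}(y) = |y-x_0|^{\alpha s - n}$.

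For the base case $m=1$, I would restrict the $r$-integral in $\mathbf{W}_{\alpha,s}(f^{s-1}d\sigma)(x)$ to $r\geq 2^{j_x+2}$. A straightforward geometric check (using $|x-x_0|<2^{j_x+1}$) shows that for such $r$ one has $B_{k+1}\subset B(x,r)$ for every $k\leq j_x$. Splitting $\bigcup_{k\leq j_x}(B_{k+1}\setminus B_k)$ into annuli, on each of which $|z-x_0|^{\alpha s - n}\geq 2^{(k+1)(\alpha s - n)}$, yields
\begin{equation*}
\int_{B(x,r)} f^{s-1}\,d\sigma \geq 2^{\alpha s - n}\,T_{j_x}\qquad (r\geq 2^{j_x+2}).
\end{equation*}
Pulling the constant out and computing the elementary integral $\int_{2^{j_x+2}}^{\infty} r^{(\alpha s - n)/(s-1)}\,dr/r = \frac{s-1}{n-\alpha s}\,2^{(j_x+2)\frac{\alpha s - n}{s-1}}$, together with the bound $2^{j_x\frac{\alpha s - n}{s-1}}\geq |x-x_0|^{\frac{\alpha s - n}{s-1}}$ (since $|x-x_0|\geq 2^{j_x}$ and the exponent is negative), produces the base case with the precise constant $\frac{s-1}{n-\alpha s}8^{\frac{\alpha s - n}{s-1}}$ after combining the factors $2^{\frac{\alpha s - n}{s-1}}$ from $2^{\alpha s - n}$ and $4^{\frac{\alpha s - n}{s-1}}$ from $2^{(j_x+2)\frac{\alpha s - n}{s-1}}$.

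For the inductive step, suppose the claim holds for $m$ and apply the induction hypothesis pointwise inside $\mathcal{N}^{m+1}(f)(x) = \mathbf{W}_{\alpha,s}((\mathcal{N}^m f)^{s-1}\,d\sigma)(x)$. The key observation is that the quantity $T(z) := T_{j_z}$ is \emph{constant} on each annulus $B_{k+1}\setminus B_k$, equal to $T_k$. Repeating the geometric argument of the base case gives, for $r\geq 2^{j_x+2}$,
\begin{equation*}
\int_{B(x,r)} f^{s-1}(z)\,T(z)^m\,d\sigma(z) \geq 2^{\alpha s - n}\sum_{k\leq j_x} T_k^m\, a_k.
\end{equation*}
The main technical point is then the Abel-summation inequality
\begin{equation*}
\sum_{k\leq j_x} T_k^m\,(T_k - T_{k-1}) \geq \frac{1}{m+1}\sum_{k\leq j_x}(T_k^{m+1}-T_{k-1}^{m+1}) = \frac{T_{j_x}^{m+1}}{m+1},
\end{equation*}
which follows from $T_k^{m+1}-T_{k-1}^{m+1}\leq (m+1)T_k^m(T_k-T_{k-1})$, and from $T_k\to 0$ as $k\to -\infty$ (as $T_k$ is a left-partial sum of nonnegative terms). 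Substituting this back and performing the same $r$-integration as in the base case yields exactly the factor $\frac{s-1}{n-\alpha s}\cdot 8^{\frac{\alpha s - n}{s-1}}\cdot f(x)\cdot ((m+1)!)^{-1/(s-1)}\,T_{j_x}^{(m+1)/(s-1)}$, closing the induction.

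The main obstacle, and the only place where the combinatorial structure matters, is the Abel-summation step: it is what produces the factor $1/(m+1)$ at each stage and thus the crucial $1/m!$ (and eventually the exponential, after Lemma \ref{lowbdsum} sums the series). Everything else is careful bookkeeping of the constant $8^{\frac{\alpha s - n}{s-1}}$ through the geometric estimate $2^{(j_x+2)\frac{\alpha s - n}{s-1}}\geq 4^{\frac{\alpha s - n}{s-1}}|x-x_0|^{\frac{\alpha s - n}{s-1}}$, combined with the extra $2^{\frac{\alpha s - n}{s-1}}$ coming from the lower bound $2^{(k+1)(\alpha s-n)}$ on $|z-x_0|^{\alpha s - n}$ in each annulus.
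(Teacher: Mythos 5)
Your proof is correct and follows essentially the same route as the paper's: induction on $m$, dyadic decomposition of $B(x_0,2|x-x_0|)$ into annuli $B_{k+1}\setminus B_k$ on which $j_z\equiv k$, and an Abel-summation step producing the factor $1/(m+1)$. The only cosmetic differences are that you restrict to $r\geq 2^{j_x+2}$ rather than $r>4|x-x_0|$, and that you phrase the combinatorial step as a telescoping sum rather than invoking the inequality (\ref{sumparts1}) directly.
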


\begin{proof}  We will prove this lemma by induction. Let us  recall the definition of the operator $\mathcal{N}$:
\begin{equation}\nonumber
\mathcal{N} (\abs{\cdot - x_0}^{\frac{\alpha s-n}{s-1}})(x) = \int_0^{\infty} \Bigl( \frac{1}{r^{n-\alpha s}} \int_{B(x,r)} \abs{y-x_0}^{\alpha s-n} d\sigma(y) \Bigl)^{1/(s-1)} \frac{dr}{r} .
\end{equation}
First, restrict the integration in the variable $r$ to $r>4\abs{x-x_0}$.  Then, observe that as $r>4\abs{x-x_0}$:  $B(x_0, 2\abs{x-x_0}) \subset B(x,r)$.   This results in the bound:
\begin{equation}\begin{split}\label{lowfundstep1}
\mathcal{N} (\abs{\cdot - x_0}^{\frac{\alpha s-n}{s-1}})(x) \geq & \int_{4\abs{x-x_0}}^{\infty} r^{\frac{\alpha s-n}{s-1}} \frac{dr}{r} \\& \cdot\Bigl(\int_{B(x_0, 2\abs{x-x_0})} \abs{y-x_0}^{\alpha s-n} d\sigma(y) \Bigl)^{1/(s-1)} . 
\end{split}\end{equation}
Now, recalling the definition of $j_x$, we have:
\begin{equation}\nonumber
\int_{B(x_0, 2\abs{x-x_0})} \abs{y-x_0}^{\alpha s-n} d\sigma(y) \geq \sum_{k=-\infty}^{j_x} 2^{(k+1)(\alpha s-n)} \sigma(B_{k+1}\backslash B_k) . 
\end{equation}
Using this and evaluating the integral in (\ref{lowfundstep1}) yields the case where $k=1$. 

Now suppose (\ref{lowlemfund1}) holds for some $m$.  Then by the induction hypothesis, and the observation above:
\begin{equation}\begin{split}\nonumber
\mathcal{N}^{m+1}&(\abs{\cdot - x_0}^{\frac{\alpha s-n}{s-1}})(x) \geq \Bigl(\frac{s-1}{n-\alpha s} 8^{\frac{\alpha s-n}{s-1}}\Bigl)^m\frac{s-1}{n-\alpha s}4^{\frac{\alpha s-n}{s-1}}\abs{x - x_0}^{\frac{\alpha s-n}{s-1}}\\
& \cdot\Bigl( \frac{1}{m!}\int_{B(x_0, 2\abs{x-x_0})}|z-x_0|^{\alpha s-n}\Bigl(\sum_{\ell =-\infty}^{j_y}2^{\ell(\alpha s-n)}\sigma(B_{\ell+1}\backslash B_{\ell})\Bigl)^m d\sigma(y)\Bigl)^{1/(s-1)} . 
\end{split}\end{equation}
We now consider the integral 
\begin{equation}\label{llfint}
 \int_{B(x_0, 2\abs{x-x_0})}|z-x_0|^{\alpha s -n}\Bigl(\sum_{\ell =-\infty}^{j_y}2^{\ell(\alpha s-n)}\sigma(B_{\ell+1}\backslash B_{\ell})\Bigl)^m d\sigma(y) . 
\end{equation}
To complete the inductive step and hence prove the lemma it suffices to show that (\ref{llfint}) is greater than 
\begin{equation}\label{llfint2}
\frac{2^{\alpha s-n}}{m+1} \Bigl(\sum_{\ell =-\infty}^{j_x}2^{\ell(\alpha s-n)}\sigma(B_{\ell+1}\backslash B_{\ell})\Bigl)^{m+1} .
\end{equation}
To this end, note that by the definition of $j_x$, (\ref{llfint}) is greater than 
\begin{equation}\label{llfint3}
\sum_{k = -\infty}^{j_x} 2^{(k+1)(\alpha s-n)} \int_{B_{k+1}\backslash B_k}\Bigl(\sum_{\ell =-\infty}^{j_y}2^{\ell(\alpha s-n)}\sigma(B_{\ell+1}\backslash B_{\ell})\Bigl)^m d\sigma(y) . 
\end{equation}
We next remark that for all $y\in B_{k+1}\backslash B_k$, we have by definition $j_y = k$, and so (\ref{llfint3}) equals:
\begin{equation}\label{llfintstep2}
2^{\alpha s-n}\sum_{k = -\infty}^{j_x} 2^{k(\alpha s-n)} \sigma(B_{k+1}\backslash B_k)\Bigl(\sum_{\ell =-\infty}^{k}2^{\ell(\alpha s-n)}\sigma(B_{\ell+1}\backslash B_{\ell})\Bigl)^m  . 
\end{equation}
But an application of the elementary summation by parts inequality (\ref{sumparts1}) now gives that (\ref{llfintstep2}) is greater than (\ref{llfint2}).  This concludes the proof of the Lemma.
\end{proof}

By using Jensen's (or H\"{o}lder's) inequality, inserting Lemma \ref{greenlowbd1} into the bounds (\ref{Nlowbdsleq2}) and (\ref{Nlowbdsgeq2}) in Lemma \ref{lowbdsum} yields the existence of positive constants $c_1$ and $c_2$, depending on $n, \alpha, s$ and $C_0$, so that:
\begin{equation}\nonumber
u(x) \geq c_1 \abs{x-x_0}^{\frac{\alpha s-n}{s-1}}\exp\Bigl(c_2 \sum_{\ell =-\infty}^{j_x}2^{\ell(\alpha s-n)}\sigma(B_{\ell+1}\backslash B_{\ell}) \Bigl) . 
\end{equation}
But, since $\sigma$ satisfies (\ref{potest}), we may further estimate the sum. Indeed, 
\begin{equation}\nonumber
 \sum_{\ell =-\infty}^{j_x}2^{\ell(\alpha s-n)}\sigma(B_{\ell+1}\backslash B_{\ell}) \geq C \int_{0}^{\abs{x-x_0}}\frac{\sigma(B(x,r))}{r^{n-\alpha s}}\frac{dr}{r},
\end{equation}
where $C = C(n,\alpha ,s)>0$.  Hence we may conclude that there are positive constants $c_1$ and $c_2$, depending on $n, \alpha, s, C_0$ and $C(\sigma)$,  so that:
\begin{equation}\nonumber
u(x) \geq c_1 \abs{x-x_0}^{\frac{\alpha s-n}{s-1}}\exp\Bigl(c_2 \int_{0}^{\abs{x-x_0}}\frac{\sigma(B(x,r))}{r^{n-\alpha s}}\frac{dr}{r} \Bigl) . 
\end{equation}
The second part of the exponential build up in Theorem \ref{genlowbd} is accounted for in the following lemma:
\begin{lem}\label{greenlowbd2} For any $m\geq 1$,
\begin{equation}\begin{split}\label{lowlemfund2}
\mathcal{N}^m(\abs{\cdot - x_0}^{\frac{\alpha s-n}{s-1}})(x) \geq & (3/2)^{\frac{\alpha s-n}{s-1}}\frac{1}{m!} \abs{x-x_0}^{\frac{\alpha s-n}{s-1}} \\ & \cdot \Bigl(\int_0^{\abs{x-x_0}}\Bigl(\frac{\sigma(B(x,r/2))}{r^{n-\alpha s}}\Bigl)^{1/(s-1)}\frac{dr}{r}\Bigl)^{m} . 
\end{split}\end{equation}
\end{lem}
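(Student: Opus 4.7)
The plan is to prove Lemma \ref{greenlowbd2} by induction on $m$, in close parallel with the proof of Lemma \ref{greenlowbd1}, but with the iteration organized around the point $x$ so as to produce a Wolff potential centered at $x$ in the final estimate.

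For the base case $m = 1$, I restrict the outer integration in the definition of $\mathcal{N}(f_0)(x)$ to $r \in (0, |x - x_0|]$ and restrict the inner integration to $y \in B(x, r/2) \subset B(x, r)$. For such $y$, the triangle inequality gives $|y - x_0| \leq (3/2)|x - x_0|$, and since $\alpha s - n < 0$ this implies $|y - x_0|^{\alpha s - n} \geq ((3/2)|x - x_0|)^{\alpha s - n}$. Pulling this factor out of the inner integral and raising to the power $1/(s-1)$ produces the $m=1$ bound directly.

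For the inductive step, I write $\mathcal{N}^{m+1}(f_0)(x) = \mathcal{N}(\mathcal{N}^m(f_0))(x)$ and perform the same restriction on the outer $r$ and inner $y$. Applying the inductive hypothesis at $y$ gives a Wolff integral $\int_0^{|y-x_0|}\bigl(\sigma(B(y,\rho/2))/\rho^{n-\alpha s}\bigr)^{1/(s-1)} d\rho/\rho$ centered at $y$. The geometric observation that $B(x, \rho/4) \subset B(y, \rho/2)$ whenever $y \in B(x, r/2)$ and $\rho \geq r$, followed by the change of variables $\rho = 2\tilde{\rho}$, lets me bound this $y$-centered integral from below by the truncated $x$-centered Wolff integral $F(r) := \int_r^{|x-x_0|}\psi_x(\rho)\, d\rho/\rho$, with $\psi_x(\rho) = \bigl(\sigma(B(x, \rho/2))/\rho^{n-\alpha s}\bigr)^{1/(s-1)}$, up to a controlled numerical constant. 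This reduces the estimate to a quantity of the form
\begin{equation*}
\mathcal{N}^{m+1}(f_0)(x) \gtrsim |x - x_0|^{(\alpha s - n)/(s-1)} \int_0^{|x - x_0|} \psi_x(r)\, F(r)^m\, \frac{dr}{r}.
\end{equation*}
The continuous analogue of the summation-by-parts inequality (\ref{sumparts1}), namely the elementary identity
\begin{equation*}
\int_0^{R} \psi_x(r) F(r)^m \, \frac{dr}{r} \,=\, \frac{1}{m+1}\left(\int_0^{R} \psi_x(\rho)\, \frac{d\rho}{\rho}\right)^{m+1}
\end{equation*}
(applied with $R = |x - x_0|$, and verified by noting $F'(r) = -\psi_x(r)/r$), then upgrades the factor $1/m!$ to $1/(m+1)!$ and produces the $(m+1)$-st power of the Wolff potential centered at $x$, closing the induction.

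The main obstacle is the transfer step in the induction: the inductive hypothesis applied at $y \in B(x, r/2)$ naturally yields Wolff integrals centered at $y$, whereas the conclusion of the lemma requires integrals centered at $x$. The delicate balance is that the restriction $y \in B(x, r/2)$ must simultaneously produce a favorable triangle-inequality bound on $|y - x_0|^{\alpha s - n}$ and the ball containment that allows $\psi_y(\rho)$ to be replaced by $\psi_x$ at scales $\rho \geq r$; the geometric constants that arise in this transfer must be absorbed into the prefactor $(3/2)^{(\alpha s - n)/(s-1)}$ of the statement, which is permissible since $(3/2)^{(\alpha s - n)/(s-1)} < 1$ (as $\alpha s - n < 0$).
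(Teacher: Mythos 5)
Your base case $m=1$ is correct and agrees with the paper: restricting to $r\le|x-x_0|$ and $y\in B(x,r/2)$ gives $|y-x_0|\le(3/2)|x-x_0|$, and pulling out $|y-x_0|^{\alpha s-n}\ge((3/2)|x-x_0|)^{\alpha s-n}$ produces exactly the stated bound. The integration-by-parts identity you invoke ($F'(r)=-\psi_x(r)/r$, hence $\int_0^R\psi_x F^m\,dr/r=W(x)^{m+1}/(m+1)$) is also correct, and is the same device the paper uses at the very end.

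The inductive step, however, has a real gap, and it is precisely the gap the paper's non-inductive proof is designed to avoid. In your induction you apply the hypothesis at $y\in B(x,r/2)$, which produces the Wolff integral $W(y)=\int_0^{|y-x_0|}\psi_y(\rho)\,d\rho/\rho$ centered at $y$, raised to the $m$-th power. To close the induction you must replace $W(y)^m$ by (a multiple of) $F(r)^m$ centered at $x$. But the geometric transfer loses a multiplicative constant: the containment you claim, $B(x,\rho/4)\subset B(y,\rho/2)$ for $\rho\ge r$, only holds for $\rho\ge 2r$ (since $|z-y|<\rho/4+r/2$, which is $<\rho/2$ only when $r\le\rho/2$), and after the change of variables $\rho=2\tilde\rho$ you pick up a factor $2^{-(n-\alpha s)/(s-1)}<1$ plus a shrinking of the upper limit from $|x-x_0|$ to roughly $|x-x_0|/4$. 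Call the multiplicative loss $c'<1$; then in the inductive step $W(y)^m\gtrsim(c')^mF(r)^m$, so the constant in the inductive hypothesis degrades by $(c')^m$ at the step $m\to m+1$, yielding $C_m\asymp(c')^{m(m-1)/2}$ after $m$ steps. This super-exponential decay cannot be absorbed into the fixed prefactor $(3/2)^{(\alpha s-n)/(s-1)}$, and it would in fact be too weak for the intended application in Theorem \ref{genlowbd} (the series over $m$ would no longer sum to an exponential). The shrinking upper limit of integration causes a similar compounding deficit.

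The paper circumvents this entirely by \emph{not} running an induction on the function $\mathcal{N}^m(f_0)$. Instead it unrolls $\mathcal{N}^m$ as a single $m$-fold nested integral and applies the key claim (\ref{lowfundclaim}) --- which shifts the ball center from the running integration variable $y$ back to $x$ by the constant-free inequality $\omega(B(x,u/2))\le\omega(B(y,u))$ valid whenever $|y-x|<r/2$ and $u>r$ --- a total of $m-1$ times, moving from the outside in. Each application only halves the radius (so the factor $\rho^{n-\alpha s}$ in the denominator never changes and no multiplicative constant on $\sigma$ appears), which is exactly compensated by writing the final nested integrand as $\psi_x(\rho)=(\sigma(B(x,\rho/2))/\rho^{n-\alpha s})^{1/(s-1)}$. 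The only constant loss occurs once, at the innermost layer, where $|w-x_0|^{\alpha s-n}\ge((3/2)|x-x_0|)^{\alpha s-n}$ is used; this produces exactly the fixed prefactor $(3/2)^{(\alpha s-n)/(s-1)}$. To fix your argument you would need to reformulate the inductive statement so that it is about the nested integrals rather than about the function $\mathcal{N}^m(f_0)(y)$ itself --- which is in effect what the paper's repeated application of (\ref{lowfundclaim}) accomplishes.
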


\begin{proof} We will prove Lemma \ref{greenlowbd2} when $m=3$, as the case of general $m$ is completely similar.  The proof is based on the following claim:

For any locally finite Borel measures $\sigma$ and $\omega$, and $x, x_0 \in \mathbf{R}^n$:
\begin{equation}\begin{split}\label{lowfundclaim}
\int_{0}^{\abs{x-x_0}} & \Bigl(\frac{1}{r^{n-\alpha s}}\int_{B(x, r/2)} \Bigl\{\int_r^{\infty} \Bigl( \frac{1}{u^{n-\alpha s}} \omega(B(y,u))\Bigl)^{1/(s-1)}\frac{du}{u} \Bigl\}^{s-1} d\sigma(y)\Bigl)^{1/(s-1)} \frac{dr}{r}\\
& \geq \int_{0}^{\abs{x-x_0}} \Bigl(\frac{\sigma(B(x, r/2)}{r^{n-\alpha s}}\Bigl)^{1/(s-1)} \int_{r}^{\infty} \Bigl( \frac{1}{u^{n-\alpha s}} \omega(B(x,u/2))\Bigl)^{1/(s-1)}\frac{du}{u} \frac{dr}{r}.
\end{split}\end{equation}

The claim is just the triangle inequality.  Suppose that $\abs{y-x} < r/2$ and $r<u$, then whenever $z\in B(x,u/2)$:  $B(x, u/2) \subset B(y, u)$.  Thus, $\omega(B(x,u/2)) \leq \omega (B(y,u))$.  The claim (\ref{lowfundclaim}) then follows by using this estimate in the left hand side and noting that the inner integrand no longer depends on $y$.

The Lemma will follow from repeated use of the claim.  First, by using definition and restricting domains of integration:
\begin{equation}\begin{split}\label{lflint3}
\mathcal{N}^3(\abs{\cdot - x_0}^{\frac{\alpha s-n}{s-1}})(x) & \geq  \int_{0}^{\abs{x-x_0}} \Bigl(\frac{1}{r^{\alpha s-n}}\int_{B(x, r/2)} \\
& \cdot \Bigl\{\int_r^{\infty}\Bigl( \frac{1}{u^{n-\alpha s}} \omega(B(y,u))\Bigl)^{1/(s-1)}\frac{du}{u} \Bigl\}^{s-1} d\sigma(y)\Bigl)^{1/(s-1)} \frac{dr}{r}\
\end{split}\end{equation}
where:
\begin{equation}\nonumber
\omega(B(y,u)) = \int_{B(y,u)} \Bigl\{\int_{0}^{\infty} \Bigl(\frac{1}{t^{n-\alpha s}} \int_{B(z,t)} \abs{w-x_0}^{\alpha s-n} d\sigma(w)\Bigl)^{1/(s-1)} \frac{dt}{t} \Bigl\}^{s-1}d\sigma(z) . 
\end{equation}
Applying the claim (\ref{lowfundclaim}) to (\ref{lflint3}), we have that (\ref{lflint3}) is greater than:
\begin{equation}\nonumber
 \int_{0}^{\abs{x-x_0}} \Bigl(\frac{\sigma(B(x, r/2)}{r^{n-\alpha s}}\Bigl)^{1/(s-1)} \int_{r}^{\infty} \Bigl( \frac{1}{u^{n-\alpha s}} \omega(B(x,u/2))\Bigl)^{1/(s-1)}\frac{du}{u} \frac{dr}{r} . 
\end{equation}
Let's now consider the integral:
\begin{equation}\nonumber
 \int_{r}^{\infty} \Bigl( \frac{1}{s^{n-\alpha s}} \omega(B(x,u/2))\Bigl)^{1/(s-1)}\frac{du}{u} \geq  \int_{r}^{\abs{x-x_0}} \Bigl( \frac{1}{u^{n-\alpha s}} \omega(B(x,u/2))\Bigl)^{1/(s-1)}\frac{du}{u} .
\end{equation}
Then we may rewrite the right hand side of this last line as:
\begin{equation}\begin{split}\label{lflint4}
\int_{r}^{\abs{x-x_0}} & \Bigl( \frac{1}{u^{n-\alpha s}}  \int_{B(x,u/2)} \Bigl\{\int_{0}^{\infty} \Bigl(\frac{1}{t^{n-\alpha s}} \mu(B(z,t))\Bigl)^{1/(s-1)} \frac{dt}{t} \Bigl\}^{s-1}d\sigma(z)\Bigl)^{1/(s-1)}\frac{du}{u} 
\end{split}\end{equation}
where
\begin{equation}\nonumber
\mu(B(z,t)) =  \int_{B(z,t)} \abs{w-x_0}^{\alpha s-n} d\sigma(w) . 
\end{equation}
Now, restricting the integral over $t$ to $t>u$, and applying the claim (\ref{lowfundclaim}) with $\omega = \mu$, we see that (\ref{lflint4}) is greater than 
\begin{equation}\nonumber
\int_{r}^{\abs{x-x_0}} \Bigl( \frac{1}{u^{n-\alpha s}}  \sigma (B(x,u/2))\Bigl)^{1/(s-1)} \int_u^{\abs{x-x_0}} \Bigl(\frac{1}{t^{n-\alpha s}} \mu(B(x,t/2))\Bigl)^{1/(s-1)} \frac{dt}{t}\frac{du}{u}
\end{equation}
where we have also restricted the integration over $t$ to $t< \abs{x-x_0}$.  Now, let us consider:
\begin{equation}\begin{split}\nonumber
 \int_u^{\abs{x-x_0}} & \Bigl(\frac{1}{t^{n-\alpha s}} \mu(B(x,t))\Bigl)^{1/(s-1)} \frac{dt}{t} \\ & = \int_u^{\abs{x-x_0}} \Bigl(\frac{1}{t^{n-\alpha s}}  \int_{B(x,t/2)} \abs{w-x_0}^{\alpha s-n} d\sigma(w)\Bigl)^{1/(s-1)} \frac{dt}{t} . 
\end{split}\end{equation}
But, for $w\in B(x,t/2)$, note that: $\abs{w-x_0} < 3/2 \abs{x-x_0}$.  Thus, 
\begin{equation}\nonumber\begin{split}
 \int_u^{\abs{x-x_0}} \Bigl(&\frac{1}{t^{n-\alpha s}} \mu(B(x,t/2))\Bigl)^{1/(s-1)} \frac{dt}{t} \\
 & \geq (3/2)^{\frac{\alpha s-n}{s-1}}\abs{x-x_0}^{\frac{\alpha s-n}{s-1}} \int_u^{\abs{x-x_0}} \Bigl(\frac{1}{t^{n-\alpha s}} \sigma(B(x,t/2))\Bigl)^{1/(s-1)} \frac{dt}{t}  . 
\end{split}\end{equation}
Putting together what we have so far,
\begin{equation}\nonumber\begin{split}
\mathcal{N}^3( & \abs{\cdot - x_0}^{\frac{\alpha s-n}{s-1}})(x)  \geq (3/2)^{\frac{\alpha s-n}{s-1}}\abs{x-x_0}^{\frac{\alpha s-n}{s-1}}\int_{r=0}^{\abs{x-x_0}}\Bigl(\frac{\sigma(B(x,r/2))}{r^{n-\alpha s}}\Bigl)^{1/(s-1)}\\
& \cdot\int_{u=r}^{\abs{x-x_0}}\Bigl(\frac{\sigma(B(x,u/2))}{u^{n-\alpha s}}\Bigl)^{1/(s-1)}\int_{t=u}^{\abs{x-x_0}}\Bigl(\frac{\sigma(B(x,t/2))}{t^{n-\alpha s}}\Bigl)^{1/(s-1)}\frac{dt}{t}\frac{du}{u}\frac{dr}{r} .
\end{split}
\end{equation}
Integration by parts now yields the Lemma in the case $m=3$.  It is easy to see that a completely similar argument works for general $m$, using the claim (\ref{lowfundclaim}) $m-1$ times as we have done  twice in the above argument.  Thus the Lemma is proved.
\end{proof}

As with Lemma \ref{greenlowbd1}, we readily see that applying Lemma \ref{greenlowbd2} to the iterates in the bounds (\ref{Nlowbdsleq2}) and (\ref{Nlowbdsgeq2}) of Lemma \ref{lowbdsum} yields the existence of positive constants $c_1$ and $c_2$, depending on $n, \alpha, s$ and $C_0$, so that 
\begin{equation}\label{lowbdrem1}
u(x) \geq c_1 \abs{x-x_0}^{\frac{\alpha s-n}{s-1}} \exp \Bigl(c_2\int_{0}^{\abs{x-x_0}} \Bigl(\frac{\sigma(B(x,r/2))}{r^{n-\alpha s}}\Bigl)^{1/(s-1)} \frac{dr}{r}\Bigl) . 
\end{equation}

But, since $C(\sigma)<\infty$, we can replace $\sigma(B(x,r/2))$ by $\sigma(B(x,r))$  in the integral in (\ref{lowbdrem1}). Indeed, by change of variables:
\begin{equation}\nonumber
\int_0^{\abs{x-x_0}} \Bigl(\frac{\sigma(B(x,r/2))}{r^{n-\alpha s}}\Bigl)^{1/(s-1)} \frac{dr}{r} = 2^{\frac{\alpha s-n}{s-1}}\int_0^{\abs{x-x_0}/2}\Bigl(\frac{\sigma(B(x,r))}{r^{n-\alpha s}}\Bigl)^{1/(s-1)} \frac{dr}{r}  , 
\end{equation}
and by (\ref{potest}):
\begin{equation}\nonumber
\int_{\abs{x-x_0}/2}^{\abs{x-x_0}} \Bigl(\frac{\sigma(B(x,r))}{r^{n-\alpha s}}\Bigl)^{1/(s-1)} \frac{dr}{r} \leq C(n,\alpha, s,C(\sigma)) . 
\end{equation}
Thus we conclude that there are positive constants $c_1$ and $c_2$ depending on $n, \alpha, s, C(\sigma)$ and $C_0$ so that 
\begin{equation}\nonumber
u(x) \geq c_1 \abs{x-x_0}^{\frac{\alpha s-n}{s-1}} \exp \Bigl(c_2\int_{0}^{\abs{x-x_0}} \Bigl(\frac{\sigma(B(x,r))}{r^{n-\alpha s}}\Bigl)^{1/(s-1)} \frac{dr}{r}\Bigl) . 
\end{equation}

\begin{proof} [Proof of Theorem \ref{genlowbd}]
We have showed that if $u$ is a solution of (\ref{genfund}) with constant $C_0$, then there are constants $c_1$ and $c_2$, depending on $n, \alpha, s, C_0$ and $C(\sigma)$, so that the following two inequalities hold:
\begin{equation}\label{pflowbdineq1}
u(x) \geq c_1 \abs{x-x_0}^{\frac{\alpha s-n}{s-1}}\exp\Bigl(c_2 \int_{0}^{\abs{x-x_0}}\frac{\sigma(B(x,r))}{r^{n-\alpha s}}\frac{dr}{r} \Bigl) , 
\end{equation}
\begin{equation}\label{pflowbdineq2}
u(x) \geq c_1 \abs{x-x_0}^{\frac{\alpha s-n}{s-1}} \exp \Bigl(c_2\int_{0}^{\abs{x-x_0}} \Bigl(\frac{\sigma(B(x,r))}{r^{n-\alpha s}}\Bigl)^{1/(s-1)} \frac{dr}{r}\Bigl) . 
\end{equation}
Averaging (\ref{pflowbdineq1}) and (\ref{pflowbdineq2}) with the inequality of the arithmetic mean and geometric mean, $a/2+ b/2 \geq \sqrt{ab}$, yields the required lower bound for solutions of (\ref{genfund}), and hence completes the proof of Theorem \ref{genlowbd}.
\end{proof}

\begin{proof}[Proof of Theorems \ref{l1lowbd} and \ref{l2lowbd}]
The capacity estimates have been proven in Remark \ref{caprem} so it remains to prove the bounds on the fundamental solutions.  Suppose first that $u$ is a fundamental solution of $\mathcal{L}$.  Then, as a result of the Wolff potential estimate, $u$ satisfies the inequality (\ref{N1ineq}), which is (\ref{genfund}) in the case when $\alpha=1$ and $s=p$.  Applying Theorem \ref{genlowbd} when specialized to this case is precisely the bound (\ref{l1lowest}) of Theorem \ref{l1lowbd}. 

Similarly, if $u$ is a fundamental solution of $\mathcal{G}$, then $u$ satisfies (\ref{N2ineq}), which is just (\ref{genfund}) when $\alpha = \frac{2k}{k+1}$ and $s=k+1$ and so we may apply Theorem \ref{genlowbd}.  We again see that the bound (\ref{genlowbdst}) in Theorem \ref{genlowbd} with this choice of $\alpha$ and $s$ is exactly the required bound (\ref{l2lowest}) in Theorem \ref{l2lowbd}.
\end{proof}

\section{Construction of a supersolution}\label{construct}
\subsection{}In this section we will construct a solution corresponding to the integral inequality (\ref{fundcons}) below, which as we have already seen is closely related to the fundamental solutions of $\mathcal{L}$ and $\mathcal{G}$.  Suppose that $v$ is a solution of the integral inequality:
\begin{equation}\label{fundcons}
v(x) \geq C_0 \mathcal{N}(v)(x) + \abs{x-x_0}^{\frac{\alpha s-n}{s-1}}
\end{equation}
where
$$\mathcal{N}(f)(x) = \mathbf{W}_{\alpha, s}(f^{s-1}d\sigma)(x)
$$
for any positive constant $C_0>0$.   Then by Theorem \ref{gensigma} there is a constant $C(\sigma)>0$ such that $\sigma$ satisfies:
\begin{equation}\label{sigmacons} \sigma(E) \leq C(\sigma) \text{cap}_{\alpha, s}(E)
\end{equation}
for all compact sets $E\subset \mathds{R}^n$.  By Corollary \ref{corexpest}, a consequence of this is that there is a positive constant $A=A(s,\alpha, n)$ so that:
\begin{equation}\label{expsigmacons}
\int_{B(x,r)}e^{\; \beta \mathbf{W}_{\alpha, s}(\chi_{B(x,r)}d\sigma)} \, d\sigma \leq \frac{1}{1-\beta A C(\sigma)}\sigma(B(x,r)), 
\end{equation}
provided $\beta A C(\sigma)<1$.  In addition note that by standard capacity estimates we may also assume that 
$$AC(\sigma) \geq \sup_{x\in \mathds{R}^n, \, r>0} \frac{\sigma(B(x,r))}{r^{n-\alpha s}}
$$
and hence the hypothesis of Lemma \ref{lemtailest} are satisfied. 

To solve the inequality (\ref{fundcons}) it suffices to find a function $u$ so that $v \geq \abs{x-x_0}^{\frac{\alpha s-n}{s-1}}$ and $v\geq C \mathcal{N}(v)$.  With this in mind the following theorem will be enough for our purposes.  Recall that $B_k = B(x_0, 2^k)$ and $j_x$ is defined to be the integer so that $2^{j_x} \leq \abs{x-x_0} < 2^{j_x+1}$.

\begin{thm} \label{superfund}  Let $\sigma$ be a measure satisfying (\ref{sigmacons}) (and hence (\ref{expsigmacons})).  In addition suppose that 
\begin{equation}\label{localfinitecons}
\int_0^1\frac{\sigma(B(x_0,r))}{r^{n-\alpha s}}\frac{dr}{r} < \infty.
\end{equation}
Define a function $v$ by the following:
\begin{equation}\begin{split}
v(x) = \abs{x-x_0}^{\frac{\alpha s-n}{s-1}}& \exp\Bigl(\beta \sum_{\ell = -\infty}^{j_x} 2^{\ell(\alpha s-n)}\sigma(B_{\ell+1}) \Bigl)\\
& \cdot \exp \Bigl(\beta \int_{0}^{\abs{x-x_0}}\Bigl(\frac{\sigma(B(x,r)}{r^{n-\alpha s}}\Bigl)^{1/(s-1)}\frac{dr}{r}\Bigl) . 
\end{split} \end{equation}
Then, if $C(\sigma)$ is sufficiently small, there exists a positive $\beta = \beta (C(\sigma), n, \alpha, s)$, along with a positive constant $C_0=C_0(\beta, n, \alpha, s, \sigma)$ so that 
$$v \geq C_0 \mathcal{N}(v), \text{  and in addition } \inf_{x\in \mathds{R}^n}v(x) = 0.$$
\end{thm}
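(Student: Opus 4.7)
My strategy is to establish $\mathcal{N}(v)(x)\le M\,v(x)$ for a finite constant $M=M(\beta,n,\alpha,s,C(\sigma))$, so that $C_0:=1/M$ realises the claimed inequality $v\ge C_0\mathcal{N}(v)$; the condition $\inf v=0$ will then follow from direct decay estimates for $v$ at infinity once $\beta$ is chosen small enough. I split the Wolff potential integral defining $\mathcal{N}(v)(x)$ at the scale $|x-x_0|/2$, writing $\mathcal{N}(v)(x)=I_{\rm near}(x)+I_{\rm far}(x)$ according as $r<|x-x_0|/2$ or $r\ge|x-x_0|/2$, and handle the two pieces separately.

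For $I_{\rm near}$, when $y\in B(x,r)$ with $r<|x-x_0|/2$ one has $|y-x_0|\asymp|x-x_0|$ and $j_y=j_x$, so the monomial and $E_1$ factors in $v(y)^{s-1}$ are pointwise comparable to those at $x$. Lemma~\ref{lemtailest} together with the bound~(\ref{potest}) yields
\[
E_2(y)\le C\,E_2(x)\,\exp\bigl(\beta\mathbf{W}_{\alpha,s}^{r}\sigma(y)-\beta\mathbf{W}_{\alpha,s}^{r}\sigma(x)\bigr),
\]
hence $v(y)^{s-1}\le C v(x)^{s-1}e^{-(s-1)\beta\mathbf{W}_{\alpha,s}^{r}\sigma(x)}e^{(s-1)\beta\mathbf{W}_{\alpha,s}^{r}\sigma(y)}$. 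Choosing $\beta$ with $(s-1)\beta AC(\sigma)<1$, Corollary~\ref{corexpest} applied to $B(x,2r)$ (after absorbing the uniformly bounded tail of $\mathbf{W}^{r}_{\alpha,s}\sigma(y)$ past radius $r$) bounds $\int_{B(x,r)}e^{(s-1)\beta\mathbf{W}_{\alpha,s}^{r}\sigma(y)}d\sigma(y)$ by $C\sigma(B(x,2r))$. Taking $(s-1)$-th roots, substituting $r\mapsto r/2$, and using $|\mathbf{W}^{r}\sigma(x)-\mathbf{W}^{r/2}\sigma(x)|\le C$ gives
\[
I_{\rm near}(x)\le C\,v(x)\int_0^{|x-x_0|}e^{-\beta\mathbf{W}_{\alpha,s}^{t}\sigma(x)}\,d\mathbf{W}_{\alpha,s}^{t}\sigma(x)\le\frac{C}{\beta}\,v(x),
\]
the crucial observation being that the integrand is the total differential of $-\beta^{-1}e^{-\beta\mathbf{W}^t\sigma(x)}$, so the estimate is uniform in the (possibly unbounded) magnitude of the Wolff potential.

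For $I_{\rm far}$, which I expect to be the main obstacle, I exploit $B(x,r)\subset B(x_0,3r)$ and partition $B(x_0,3r)$ into dyadic annuli $A_\ell=B_{\ell+1}\setminus B_\ell$. On $A_\ell$ one has $|y-x_0|^{\alpha s-n}\asymp 2^{\ell(\alpha s-n)}$, $E_1(y)^{s-1}=e^{(s-1)\beta S_\ell}$ with $S_\ell=\sum_{k\le\ell}2^{k(\alpha s-n)}\sigma(B_{k+1})$, and Corollary~\ref{corexpest} applied to $E=B(x_0,2^{\ell+2})$ bounds $\int_{A_\ell}E_2^{s-1}d\sigma\le C\sigma(B_{\ell+2})$. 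This gives
\[
\int_{A_\ell}v^{s-1}\,d\sigma\le C\,a_{\ell+1}\,e^{(s-1)\beta S_{\ell+1}},\qquad a_\ell:=2^{\ell(\alpha s-n)}\sigma(B_{\ell+1}),
\]
with $a_\ell\le C_1=C_1(\sigma)$ by~(\ref{potest}). The decisive step is the telescoping inequality
\[
e^{(s-1)\beta S_\ell}-e^{(s-1)\beta S_{\ell-1}}\ge(s-1)\beta\,a_\ell\,e^{(s-1)\beta S_{\ell-1}},
\]
which combined with $a_\ell\le C_1$ yields $\sum_{\ell\le N}a_\ell e^{(s-1)\beta S_\ell}\le(C/\beta)e^{(s-1)\beta S_N}$. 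Plugging this back into $I_{\rm far}$ and performing a dyadic decomposition in $r$ produces a geometric-type series $\sum_{m\ge j_x-1}e^{\beta S_m}2^{-m(n-\alpha s)/(s-1)}$, which converges provided $\beta<(n-\alpha s)(\log 2)/[(s-1)C_1]$; the sum is then bounded by $CE_1(x)|x-x_0|^{(\alpha s-n)/(s-1)}=Cv(x)/E_2(x)\le Cv(x)$.

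Both constraints on $\beta$---namely $(s-1)\beta AC(\sigma)<1$ and $\beta<(n-\alpha s)(\log 2)/[(s-1)C_1(\sigma)]$---are jointly satisfiable once $C(\sigma)$ is small, which is the smallness hypothesis. For $\inf v=0$: since $a_\ell\le C_1$ gives $S_{j_x}\lesssim\log|x-x_0|$ and analogously $\mathbf{W}_{\alpha,s}^{|x-x_0|}\sigma(x)\lesssim\log|x-x_0|$, a further reduction of $\beta$ forces $v(x)\lesssim|x-x_0|^{(\alpha s-n)/(s-1)+\varepsilon}$ with a strictly negative exponent, so $v\to 0$ at infinity. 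The principal difficulty is the telescoping-plus-geometric-series analysis for $I_{\rm far}$, where the interplay between $E_1$ and $E_2$ and the compatibility of the smallness conditions on $\beta$ and $C(\sigma)$ must be exploited carefully.
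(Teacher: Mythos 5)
Your treatment of the two Wolff-potential pieces $I_{\rm near}$ and $I_{\rm far}$ is essentially the paper's argument: the same split at scale $|x-x_0|/2$, the same use of Lemma~\ref{lemtailest} plus the exponential integrability (Corollary~\ref{corexpest}) for the near range, and a telescoping/summation-by-parts device for the dyadic annuli in the far range (the paper packages this as inequality~(\ref{sumparts2}), $\sum_j \lambda_j e^{\sum_{k\ge j}\lambda_k}\le 2e^{\sum_j\lambda_j}$ for $0\le\lambda_j\le 1$, which is logically the same as your convexity telescoping). Those parts are fine.

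The gap is in your argument for $\inf_{\mathds{R}^n}v=0$. You claim that $\mathbf{W}_{\alpha,s}^{|x-x_0|}\sigma(x)\lesssim\log|x-x_0|$ for all large $|x|$, ``analogously'' to $S_{j_x}\lesssim\log|x-x_0|$. The analogy breaks: from (\ref{potest}) one only gets the pointwise bound $(\sigma(B(x,r))/r^{n-\alpha s})^{1/(s-1)}\lesssim 1$, and integrating this against $dr/r$ over $(0,|x-x_0|)$ is divergent near $r=0$. The sum $S_{j_x}=\sum_{\ell\le j_x}a_\ell$ is controlled only because the hypothesis (\ref{localfinitecons}) gives convergence of $\sum_{\ell\le 0}a_\ell$ at the \emph{fixed} pole $x_0$; no such local hypothesis is available at the moving point $x$. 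In fact under (\ref{sigmacons}) alone the quantity $\int_0^1(\sigma(B(x,r))/r^{n-\alpha s})^{1/(s-1)}\frac{dr}{r}$ may be arbitrarily large or infinite at many points $x$, so $v$ need not tend to zero pointwise at infinity, and the conclusion $\inf v=0$ cannot be reached by a uniform pointwise bound. The paper instead splits $\mathbf{W}_{\alpha,s}^{|x-x_0|}\sigma(x)=\int_0^1+\int_1^{|x-x_0|}$; the second piece is $\lesssim\log|x-x_0|$ by (\ref{potest}) and is absorbed, but the first piece is handled by averaging: Fubini and (\ref{sigmacons}) show
\[
\frac{1}{R^n}\int_{B(0,R)}\int_0^1\Bigl(\frac{\sigma(B(x,r))}{r^{n-\alpha s}}\Bigr)^{1/(s-1)}\frac{dr}{r}\,dx\to 0\quad\text{as }R\to\infty,
\]
so the infimum of this quantity over each dyadic annulus $B(0,2^j)\setminus B(0,2^{j-1})$ tends to zero, and evaluating $v$ along a sequence of near-infimizers gives $\inf v=0$. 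Your write-up needs this averaging step (or an equivalent) rather than the false uniform logarithmic bound.
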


\begin{rem} The condition (\ref{localfinitecons}) is only used to ensure that $v$ is not identically infinite. By inspection of the bound in Theorem \ref{genlowbd} it is clear that if it is not satisfied then any fundamental solution is identically infinite.
\end{rem}

\begin{proof}
We let $\mathcal{N}(v)= I + II$, where $I$ is defined by 
\begin{equation}
I = \int_{0}^{\abs{x-x_0}/2} \Bigl(\frac{1}{r^{n-\alpha s}}\int_{B(x,r)} v^{s-1}(y) \, d\sigma(y)\Bigl)^{1/(s-1)}\frac{dr}{r} . 
\end{equation}
First note that for any $y\in B(x,r)$ with $r \leq\abs{x-x_0}/2$, we have that $\abs{y-x_0} \leq (3/2)\abs{x-x_0}$ and $j_y \leq j_x+1$.  In addition note that for such $y$, 
$$\abs{y-x_0} \geq \abs{x-x_0} - \abs{x-y} \geq \abs{x-x_0}/2 .$$
These two observations, when plugged into $I$, yield:
\begin{equation}\begin{split}\nonumber
I \leq 2^{\frac{n-\alpha s}{s-1}}& \abs{x-x_0}^{\frac{\alpha s-n}{s-1}} \exp\Bigl(\beta \sum_{\ell = -\infty}^{j_x + 1} 2^{\ell(\alpha s-n)} \sigma(B_{\ell+1})\Bigl) \int_0 ^{\frac{1}{2}\abs{x-x_0}}  \Bigl(\frac{1}{r^{n-\alpha s}} \\
&\cdot  \int_{B(x,r)}\exp \Bigl((s-1)\beta \int_{0}^{\frac{3}{2}\abs{x-x_0}} \Bigl( \frac{\sigma(B(y,t))}{t^{n- \alpha s} }\Bigl)^{\frac{1}{s-1}}\frac{dt}{t}\Bigl) \, d\sigma(y)\Bigl)^{\frac{1}{s-1}}\frac{dr}{r} .
\end{split} 
\end{equation}
We now pay attention to the integral 
\begin{equation}\label{thmstep1}
\int_{B(x,r)} \exp \Bigl((s-1)\beta \int_{0}^{\frac{3}{2}\abs{x-x_0}} (t^{\alpha s-n} \sigma(B(y,t))^{1/(s-1)}\frac{dt}{t}\Bigl) d\sigma(y) . 
\end{equation}
Note that we may rewrite (\ref{thmstep1}) as 
\begin{equation}\begin{split}\label{supfund1}
& \int _{B(x,r)} \exp \Bigl((s-1)\beta \int_0 ^{r} \Bigl(\frac{\sigma(B(y,t)}{t^{n-\alpha s}}\Bigl)^{1/(s-1)}\frac{dt}{t}\Bigl) \\
& \cdot \exp \Bigl((s-1)\beta \int_{r}^{\frac{3}{2}\abs{x-x_0}} \left [ \Bigl(\frac{\sigma(B(y,t)}{t^{n-\alpha s}}\Bigl)^{1/(s-1)} - \, \Bigl(\frac{\sigma(B(x,t)}{t^{n-\alpha s}}\Bigl)^{1/(s-1)}\right]\frac{dt}{t}\Bigl)\; d\sigma(y)\\
& \cdot \exp \Bigl((s-1)\beta \int_{r} ^{\frac{3}{2}\abs{x-x_0}} \Bigl(\frac{\sigma(B(x,t)}{t^{n-\alpha s}}\Bigl)^{1/(s-1)}\frac{dt}{t}\Bigl) . 
\end{split}\end{equation}
By the Wolff potential tail estimate, Lemma \ref{lemtailest}, it follows:
\begin{equation}\nonumber
\abs{\int_{r}^{\frac{3}{2}\abs{x-x_0}} \left [ \Bigl(\frac{\sigma(B(y,t)}{t^{n-\alpha s}}\Bigl)^{1/(s-1)} -  \Bigl(\frac{\sigma(B(x,t)}{t^{n-\alpha s}}\Bigl)^{1/(s-1)}\right]\frac{dt}{t}} \leq C(n, \alpha, s,C(\sigma)) . 
\end{equation}
Thus (\ref{supfund1}) is less than a constant multiple of:
\begin{equation}\begin{split}\label{preexp}
\int_{B(x,r)} & \exp \Bigl((s-1)\beta \int_0 ^{r} \Bigl(\frac{\sigma(B(y,t)}{t^{n-\alpha s}}\Bigl)^{1/(s-1)}\frac{dt}{t}\Bigl) \, d\sigma(y) \\&\cdot \exp \Bigl((s-1)\beta \int_{r} ^{\frac{3}{2}\abs{x-x_0}} \Bigl(\frac{\sigma(B(x,t)}{t^{n-\alpha s}}\Bigl)^{1/(s-1)}\frac{dt}{t}\Bigl) . 
\end{split}
\end{equation}
Now, provided $\beta C(\sigma)$ is small enough we may apply (\ref{expsigmacons}), and hence we may estimate the integral in (\ref{preexp}) by:
\begin{equation}\begin{split}\nonumber
\int_{B(x,r)} & \exp \Bigl((s-1)\beta \int_0 ^{r} \Bigl(\frac{\sigma(B(y,t)}{t^{n-\alpha s}}\Bigl)^{1/(s-1)}\frac{dt}{t}\Bigl) \, d\sigma(y)\\
& \leq \int_{B(x, 2r)} \exp \Bigl( (p-1)\beta W_{\alpha,s}^{\sigma} (\chi_{B(x,2r)})(y) \Bigl) \, d\sigma(y) \leq C \sigma(B(x, 2r)) . 
\end{split}\end{equation}
Putting these estimates together, there is a constant $C=C(n, \alpha, s, C(\sigma))$ so that:
\begin{equation}\begin{split}\nonumber
I \leq C &  \abs{x-x_0}^{\frac{\alpha s-n}{s-1}}\exp\Bigl(\beta \sum_{\ell = -\infty}^{j_x + 1} 2^{\ell(\alpha s-n)} \sigma(B_{\ell+1})\Bigl)\\
& \cdot  \int_0^{\abs{x-x_0}}\Bigl(\frac{\sigma(B(x,2r))}{r^{n-\alpha s}}\Bigl)^{1/(p-1)} \cdot \exp \Bigl(\beta \int_{r} ^{\frac{3}{2}\abs{x-x_0}} \Bigl(\frac{\sigma(B(x,t)}{t^{n-\alpha s}}\Bigl)^{1/(s-1)}\frac{dt}{t}\Bigl)\, \frac{dr}{r} . 
\end{split}\end{equation}
But now note since $\sigma$ satisfies (\ref{sigmacons}), we have, for any $\rho>0$:
\begin{equation}\label{annest}
 \int_{\rho} ^{2\rho} \Bigl(\frac{\sigma(B(x,t)}{t^{n-\alpha s}}\Bigl)^{1/(s-1)}\frac{dt}{t} \leq C, \;\;\text{and}\;\;
2^{(j_x+1)(\alpha s-n)} \sigma(B_{j_x+2})\leq C,
\end{equation}
where in this last display the constant depends on $n, \alpha, s$ and $C(\sigma)$, but is independent of $\rho$.
By a change of variables and (\ref{annest}), we see there is a positive constant $C=C(n, \alpha, s, C(\sigma))$, so that:
\begin{equation}\begin{split}\nonumber
I \leq C &  \abs{x-x_0}^{\frac{\alpha s-n}{s-1}}\exp\Bigl(\beta \sum_{\ell = -\infty}^{j_x} 2^{\ell(\alpha s-n)} \sigma(B_{\ell+1})\Bigl)\\
& \cdot \int_0^{\abs{x-x_0}}\Bigl(\frac{\sigma(B(x,r))}{r^{n-\alpha s}}\Bigl)^{1/(s-1)} \cdot \exp \Bigl(\beta \int_{r} ^{\abs{x-x_0}} \Bigl(\frac{\sigma(B(x,t)}{t^{n-\alpha s}}\Bigl)^{1/(s-1)}\frac{dt}{t}\Bigl) \, \frac{dr}{r} . 
\end{split}\end{equation}
An application of integration by parts now yields $I \leq C \, v$ for a positive constant $C=C(n, \alpha, s, C(\sigma))$.

We next consider the remainder of the Wolff potential $II$. By writing the integral as a sum over dyadic annuli, it is not difficult to see that there exists a constant $C>0$, depending on $n, s$ and $\alpha$, so that:
\begin{equation}
II \leq C \sum_{k = j_x}^{\infty}2^{k\frac{\alpha s-n}{s-1}}\Bigl(\int_{B(x,2^k)} v^{s-1} \,  d\sigma \Bigl)^{1/(s-1)} . 
\end{equation}
Let us first consider a single integral in the sum.  Since $k\geq j_x$, 
it follows that $B(x,2^k) \subset B(x_0, 2^{k+2})$.  Thus, 
\begin{equation}\begin{split}\label{supfund2}
\int_{B(x,2^k)} v^{s-1} \, d\sigma & \leq \int_{B(x_0, 2^{k+2})} v^{s-1} 
\,  d\sigma  = \sum_{\ell = -\infty}^{k+2} \int_{B_{\ell} \backslash B_{\ell-1}} v^{s-1} \, d\sigma . 
\end{split}\end{equation}
We now concentrate on one term in the sum on the right hand side of (\ref{supfund2}). Observe that for $z\in B_{\ell}\backslash B_{\ell-1}$, we have $2^{\ell} \geq \abs{z-x_0} \geq 2^{\ell - 1}$ and $j_z = \ell-1$.  This yields:
\begin{equation}\begin{split}\nonumber
 \int_{B_{\ell} \backslash B_{\ell-1}} v^{s-1}(z) d\sigma(z) \leq & 2^{(\ell - 1)(p-n)} \exp \Bigl( \beta (s-1)\sum_{m=-\infty}^{\ell-1} 2^{m(\alpha s-n)}\sigma(B_{m+1}) \Bigl)\\
 & \cdot \int_{B_\ell}\exp\Bigl((s-1)\beta \int_{0}^{2^{\ell}}\Bigl(\frac{\sigma(B(y,t))}{t^{n-\alpha s}}\Bigl)^{1/(s-1)} \frac{dt}{t}\Bigl) \, d\sigma(y). 
\end{split}\end{equation}
But, again, if we suppose that $\beta C(\sigma)$ is small, then by the exponential integration result (\ref{expsigmacons}), there is a constant $C=C(n,p,s,C(\sigma))>0$ so that:
\begin{equation}\nonumber
 \int_{B_\ell}\exp\Bigl((s-1)\beta \int_{0}^{2^{\ell}}\Bigl(\frac{\sigma(B(y,t))}{t^{n-\alpha s}}\Bigl)^{1/(s-1)} \frac{dt}{t}\Bigl) \,  d\sigma(y)\leq C\sigma(B(x,2^{\ell+1})) . 
 \end{equation}
Thus, plugging this into (\ref{supfund2}), we find that there is a constant $C = C(n,p,s,C(\sigma))>0$ so that:
\begin{equation}\begin{split}\label{supfund3}
\int_{B(x,2^k)} v^{s-1} d\sigma(z) \leq  C& \sum_{\ell = -\infty}^{k+2}2^{\ell(\alpha s-n)} \sigma(B(x,2^{\ell+1}))\\& \cdot \exp \Bigl( \beta (s-1)\sum_{m=-\infty}^{\ell-1} 2^{m(\alpha s-n)}\sigma(B_{m+1}) \Bigl) . 
\end{split}\end{equation}
Next, consider the following summation by parts estimate (see \cite{FV2}).  Suppose that $\{\lambda_j\}_j$ is a nonnegative sequence such that $0\leq \lambda_j \leq 1$.  Then:
\begin{equation}\label{sumparts2}\sum_{j=0}^{\infty}\lambda_j e^{ \sum_{k=j}^{\infty}\lambda_k} \leq 2\, e^{\sum_{j=0}^{\infty}\lambda_j} . 
\end{equation}

Provided $C(\sigma)\leq 1$, we may apply (\ref{sumparts2}) to see that the right hand side of (\ref{supfund3}) is less than a constant (depending on $n, \alpha, s, C(\sigma)$) multiple of:
\begin{equation}\nonumber
\exp\Bigl((s-1)\beta \sum_{\ell = -\infty}^{k+2} 2^{\ell(\alpha s-n)}\sigma(B_{\ell+1})\Bigl) . 
\end{equation}
Hence (as we may bound two top terms in the above sum using the $C(\sigma)$ condition), 
\begin{equation}\label{decaystep}
II \leq C \sum_{k =j_x}^{\infty} 2^{k\frac{\alpha s-n}{s-1}}\exp\Bigl(\beta \sum_{\ell = -\infty}^{k} 2^{\ell(\alpha s-n)} \sigma(B_{\ell+1})\Bigl) . 
\end{equation}
This is less than a constant multiple of $u$ provided $C(\sigma)$ is small enough.  Indeed, note that the right hand side of (\ref{decaystep}) is a constant multiple of:
\begin{equation}\begin{split}\label{decaystep2}
2^{j_x \frac{\alpha s-n}{s-1}} & \exp\Bigl(\beta \sum_{\ell = -\infty}^{j_x} 2^{\ell (\alpha s-n)} \sigma(B_{\ell +1}) \Bigl)\cdot\sum_{k=0}^{\infty}2^{k\frac{\alpha s-n}{s-1}}\exp(\beta \sum_{\ell = 1}^{k} 2^{\ell (\alpha s-n)} \sigma(B_{\ell +1})\Bigl) . 
\end{split}\end{equation} 
Now, using the definitions of $j_x$, $v$ and also (\ref{sigmacons}), it is immediate that (\ref{decaystep2}) is less than 
\begin{equation}\label{decaystep3}
C \,  v(x)  \sum_{k=0}^{\infty}2^{k\frac{\alpha s-n}{s-1}}\exp\Bigl(\beta A C(\sigma)^{s-1} k\Bigl)
\end{equation}
where $C=C(n,\alpha, s, C(\sigma))$ and $A=A(n,s, \alpha)$.  Now, with $C(\sigma)$ small enough, this series converges and so $v\geq C II$ for a positive constant $C>0$ depending on $n, s, \alpha, C(\sigma)$.

It is left to see that $\inf_{x\in \mathds{R}^n}v(x) = 0$.  To this end, first note that we can chose $C(\sigma)$ sufficiently small so that:
\begin{equation}\begin{split}\abs{x-x_0}^{\frac{\alpha s-n}{s-1}}& \exp\Bigl(\beta \sum_{\ell = -\infty}^{j_x} 2^{\ell(\alpha s-n)}\sigma(B_{\ell+1}) \Bigl)\\
& \cdot \exp \Bigl(\beta \int_{1}^{\abs{x-x_0}}\Bigl(\frac{\sigma(B(x,r)}{r^{n-\alpha s}}\Bigl)^{1/(s-1)}\frac{dr}{r}\Bigl) \;\; \rightarrow 0,\, \text{ as  } \abs{x}\rightarrow \infty.
\end{split}\end{equation}
Indeed, this follows from the argument in (\ref{decaystep3}), using the condition (\ref{sigmacons}), along with noting that:
$$ \sum_{\ell = -\infty}^{1} 2^{\ell(\alpha s-n)}\sigma(B_{\ell+1})  \leq C \int_0^1\frac{\sigma(B(x_0,r))}{r^{n-\alpha s}}\frac{dr}{r} < \infty .$$
Let us define a sequence $a_j$ by:
$$a_j = \inf_{x\in B(0, 2^j) \backslash B(0, 2^{j-1})} \int_0 ^1 \Bigl(\frac{\sigma(B(x,r))}{r^{n - \alpha s}}\Bigl)^{1/(s-1)}\frac{dr}{r}.$$
To finish the proof it therefore suffices to show that $a_j$ tends to zero as $j \rightarrow \infty$.  First suppose $s\geq 2$, then consider:
\begin{equation}\nonumber
b_R = \frac{1}{R^n}\int_{B(0,R)} \int_0^1\Bigl(\frac{\sigma(B(x,r))}{r^{n-\alpha s}}\Bigl)^{1/(s-1)}\frac{dr}{r} dx .
\end{equation}
By Fubini and H\"{o}lder's inequality,
\begin{equation} \label{estbj} b_R \leq C \int_0^1 \frac{1}{r^{\frac{n-\alpha s}{s-1}+1}}\Bigl(\frac{1}{R^n}\int_{B(0,R)} \sigma(B(x,r)) dx\Bigl)^{1/(s-1)} dr. 
\end{equation}
Then by Fubini once again, $\displaystyle \int_{B(0,R)} \sigma(B(x,r)) dx \leq C r^n \sigma(B(0,2R)) \leq Cr^n R^{n-p}$,
where we have used (\ref{sigmacons}) in this last line.  Plugging this estimate into (\ref{estbj}) we find that $b_R \rightarrow 0$ as $R\rightarrow \infty$.  This clearly implies that $a_j$ is a null sequence, since $a_j \leq C b_{2^j}$ for a positive constant independent of $j$.

Now let $1<s<2$ and note that for any integer $k$: 
\begin{equation}\label{wolfflessriesz}\begin{split}
\bigl( \int_0^{2^k}\Bigl( \frac{\sigma(B(x,r))}{r^{n-\alpha s}}\bigl)^{1/(s-1)}& \frac{dr}{r}\Bigl)^{s-1}  \leq C \Bigl(\sum_{j=-\infty}^{k} \Bigl(\frac{\sigma(B(x, 2^j))}{2^{j(n - \alpha s )}}\Bigl)^{1/(s-1)}\Bigl)^{s-1}\\
&\leq C\sum_{j=-\infty}^{k}\frac{\sigma(B(x, 2^j))}{2^{j(n-\alpha s )}} \leq C  \int_0^{2^k}\frac{\sigma(B(x,r))}{r^{n-\alpha s}}\frac{dr}{r}.
\end{split}\end{equation} 
Since the previous argument shows that:
$$ \frac{1}{R^n}\int_{B(0,R)} \int_0^1\frac{\sigma(B(x_0,r))}{r^{n-\alpha s}}\frac{dr}{r} dx \;\; \rightarrow 0, \,\text{ as }\; R\rightarrow \infty,
$$
we conclude that $a_j \rightarrow 0$ as $j\rightarrow \infty$ when $1<s<2$.  Thus $\inf_{x\in \mathds{R}^n} v(x) = 0$.
\end{proof}

\section{Proofs of Theorems \ref{l1upbd} and \ref{l2upbd}}\label{existence}

\subsection{} In this section we will prove Theorems \ref{l1upbd} and \ref{l2upbd}.  We make use of the construction in Section \ref{construct}.  Combined with a simple iteration scheme based on weak continuity, which is similar to those in \cite{PV, PV2}.  Let us first consider the quasilinear case. 

\begin{proof}[Proof of Theorem \ref{l1upbd}] 
Recall that we denote by $C(\sigma)$ the positive (and by assumption finite) constant:
$$C(\sigma) = \sup_{E}\frac{\sigma(E)}{\text{cap}_{1,p}(E)} , 
$$
where the supremum is taken over all compact sets $E \subset \mathds{R}^n$ of positive capacity.  Note that by Lemma \ref{quascapequiv};
$$C(\sigma) \geq C \sup_{E}\frac{\sigma(E)}{\text{cap}_{p}(E)} $$
where $\text{cap}_p$ is the $p$-capacity, and $C=C(n,p)>0$.
Suppose first that:
\begin{equation}\label{quaslocfin}
\int_0^1 \frac{\sigma(B(x_0,r))}{r^{n-p}}\frac{dr}{r} = \infty . 
\end{equation}
Then, we see that by Theorem \ref{l1lowbd} any fundamental solution $u(x,x_0) \equiv \infty$, and there is nothing to prove.  Hence we may assume that  the integral in (\ref{quaslocfin}) is finite, and so we may apply Theorem \ref{superfund}. This implies that if $C(\sigma)$ is sufficiently small, in terms of $n$ and $p$, then there is a function $v \in L^{p-1}_{\text{loc}}(\sigma)$ and a constant $C_0 >0$, depending on $n$ and $p$ such that:
\begin{equation}\label{quasintsuper}
v(x) \geq C_0 \mathbf{W}_{1,p}^{\sigma}(v^{p-1})(x) + \tilde{K}\abs{x-x_0}^{\frac{p-n}{p-1}},
\end{equation}
and $\inf_{x\in \mathds{R}^n}v(x) = 0$.  Here $\tilde{K} = \frac{p-1}{n-p} K$, with $K=K(n,p)>0$ the same constant that appears in the Wolff potential estimate, Theorem \ref{kmpotest}.  Indeed, recalling that $j_x$ is the integer such that $2^{j_x}\leq \abs{x-x_0} \leq 2^{j_x+1}$, we can let 
\begin{equation}\begin{split}\nonumber
v(x) = 2\tilde{K} \abs{x-x_0}^{\frac{p-n}{p-1}}& \exp\Bigl(\beta \sum_{\ell = -\infty}^{j_x} 2^{\ell(p-n)}\sigma(B(x_0, 2^{\ell+1})) \Bigl)\\
& \cdot \exp \Bigl(\beta \int_{r=0}^{\abs{x-x_0}}\Bigl(\frac{\sigma(B(x,r))}{r^{n-p}}\Bigl)^{1/(p-1)}\frac{dr}{r}\Bigl)
\end{split}\end{equation}
for a suitable choice of $\beta = \beta(n,p)>0$.
Let $u_0 = G( \, \cdot, x_0)$ where $G(x,x_0)$ is defined in (\ref{unperturbed}).  Then $u_0$ is $p$-superharmonic in $\mathds{R}^n$ and $-\Delta_p u_0(x) = \delta_{x_0}$ (in fact $u_0$ is the unique such solution, see, e.g. \cite{KicVer1}). By choice of $\tilde{K}$ (assuming $K>1$) we have that $u_0 \leq v$, and hence $u_0 \in L^{p-1}_{\text{loc}}(\sigma)$.  Let $\epsilon>0$ be such that $\epsilon K \leq C_0$, then we claim that there exists a sequence $\{u_m\}_{m\geq 0}$ of functions which are $p$-superharmonic in $\mathds{R}^n$, $u_m \in L^{p-1}_{\text{loc}}(\sigma)$:
\begin{equation}\label{pfundapprox}-\Delta_p u_m = \epsilon \sigma (u_{m-1})^{p-1} + \delta_{x_0} ,\;\; \text{and} \inf_{x\in \mathds{R}^n}u_m(x) = 0, 
\end{equation}
and in addition $u_m(x) \leq v(x)$.  The existence of this sequence can be shown by the techniques of \cite{PV2}, using the notion of \textit{renormalized solutions}.  However, as we are dealing exclusively with $p$-superharmonic functions this detour would be somewhat artificial and so we prove the claim directly.  Indeed, suppose that $u_1, \dots, u_{m-1}$ have been constructed. Then, $\epsilon \sigma (u_{m-1})^{p-1} + \delta_{x_0}$ is a locally finite Borel measure.  For each $j \in \mathds{N}$, let $u_m^j$ be a positive $p$-superharmonic function such that 
$$-\Delta_p u_m^j = \epsilon \sigma (u_{m-1})^{p-1} \chi_{B(x_0, 2^j)} + \delta_{x_0}\; \text{ in } \; \mathds{R}^n . 
$$
The existence of such a $p$-superharmonic function is guaranteed by \cite{Kil1}, Theorem 2.10.  
By subtracting a positive constant, we may assume that $\inf_{x\in \mathds{R}^n} u_{m}^j = 0$.

Now, by the global Wolff potential estimate and since $\mathbf{W}_{1,p}(\delta_{x_0}) = \frac{p-1}{n-p}\abs{x-x_0}^{\frac{p-n}{p-1}}$, we find that
$$u_m^j(x) \leq K\epsilon \mathbf{W}_{1,p}^{\sigma}u^{p-1}_{m-1}(x) + \tilde{K} \abs{x-x_0}^{\frac{p-n}{p-1}} . 
$$
But since $u_{m-1} \leq v$,
$$u_m^j(x) \leq K\epsilon \mathbf{W}_{1,p}^{\sigma}v^{p-1}(x) + \tilde{K} \abs{x-x_0}^{\frac{p-n}{p-1}} . 
$$
By choice of $\epsilon>0$ so that $K\epsilon \leq C_0$, we conclude that $u_{m}^j(x) \leq v(x)$.

Appealing now to Theorem \ref{limsuper} (\cite{KM}, Theorem 1.17), we find a subsequence $u_m^{j_k}$ and an $p$-superharmonic function $u_m$ such that $u_m^{j_k}(x) \rightarrow u_m(x)$ for almost every $x\in \mathds{R}^n$.  Thus $u_m(x) \leq v(x)$ and hence $\inf_{x\in \mathds{R}^n} u_m(x) =0$.  The claim is then completed by appealing to Theorem \ref{weakcont} to see that 
$$-\Delta_p u_m = \epsilon \sigma (u_{m-1})^{p-1} + \delta_{x_0} \;\;\; \text{in}\;\;\mathds{R}^n . 
$$

Now, since $u_m(x) \leq v(x)$, for all $m$, we may again find a subsequence $\{u_{m_k}\}_k$ and a positive $p$-superharmonic function $u$ so that $u_{m_k}(x) \rightarrow u(x)$ almost everywhere.  Since it follows that $u(x)\leq v(x)$, we have that $\inf_{x\in \mathds{R}}u(x)=0$.  Finally, by Theorem \ref{weakcont}, we may conclude that:
$$-\Delta_p u = \epsilon \sigma u^{p-1} + \delta_{x_0} \;\;\; \text{in}\; \; \mathds{R}^n . 
$$
This completes the proof of Theorem \ref{l1upbd} with the potential $\tilde{\sigma} = \epsilon \sigma$, once we notice that:
$$ \sum_{\ell = -\infty}^{j_x} 2^{\ell(p-n)}\sigma(B(x_0, 2^{\ell+1}))\leq C \int_0^{\abs{x-x_0}} \frac{\sigma(B(x_0,r))}{r^{n-p}}\frac{dr}{r} + C
$$
for a positive constant $C$ depending on $n$ and $p$.
\end{proof}

\subsection{} For the Hessian existence theorem, we may state the following Lemma, contained in \cite{PV2}, Lemma 4.7.

\begin{lem}\cite{PV2}\label{Hessiansequence} Let $\mu$ and $\nu$ be nonnegative locally finite Borel measures in $\mathds{R}^n$, so that $\mu \leq \nu$ and $\mathbf{W}_{\frac{2k}{k+1}, k+1}\nu<\infty$ almost everywhere.  Suppose that $u  \geq 0$  satisfies $-u \in \Phi^k(\mathds{R}^n)$, $\mu_k[-u]=\mu$, and $u$ is a pointwise a.e. limit of a subsequence of the sequence $\{u_m\}_m$, with $-u_m \in \Phi^k(B(x_0, 2^{m+1}))$ and 
\begin{equation}\begin{cases}\nonumber
\;\mu_k[-u_m] = \mu \chi_{B(x_0,2^m)} \;\;\; \text{in}\;\;B(x_0, 2^{m+1})\\
\; u_m = 0\;\; \text{on} \;\;\; \partial B(x_0, 2^{m+1}) . 
\end{cases}\end{equation}
Then there is a nonnegative function so that $-w \in \Phi^k(\mathds{R}^n)$, $w\geq u$, and
\begin{equation}\nonumber
\mu_k[-w] = \nu \;\;\;\; \text{and}\;\; \inf_{x\in \mathds{R}^n}v(x) =0 . 
\end{equation}
Moreover, $w$ is a pointwise a.e. limit of a sequence $\{w_m\}_m$, so that $-w_m \in \Phi^k(B(x_0, 2^{m+1}))$ and 
\begin{equation}\begin{cases}\nonumber
\;\mu_k[-w	_m] = \nu \chi_{B(x_0,2^m)} \;\;\; \text{in}\;\;B(x_0, 2^{m+1})\\
\; w_m = 0\;\; \text{on} \;\;\; \partial B(x_0, 2^{m+1}) . 
\end{cases}\end{equation}
\end{lem}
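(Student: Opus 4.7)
The plan is to build $w$ as the monotone pointwise limit of solutions to Dirichlet problems on expanding balls with right-hand sides $\nu\chi_{B(x_0,2^m)}$, paralleling the construction of $u$ but with $\nu$ replacing $\mu$. The comparison principle for $k$-Hessian equations will give both $w\ge u$ and the monotonicity of the sequence, Theorem \ref{Hessweakcont} (weak continuity) will identify $\mu_k[-w]$ with $\nu$, and Theorem \ref{Hesswolff} (the global Wolff potential estimate) will supply the essential a.e.\ finiteness.

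First I would invoke the Dirichlet problem theory for $k$-Hessian equations with finite measure data on balls (see \cite{TWHess3} and references therein) to obtain, for each $m$, a function $w_m$ with $-w_m \in \Phi^k(B(x_0,2^{m+1}))$ satisfying
\begin{equation*}
\mu_k[-w_m] = \nu\chi_{B(x_0,2^m)} \text{ in } B(x_0,2^{m+1}), \qquad w_m = 0 \text{ on } \partial B(x_0, 2^{m+1}).
\end{equation*}
Local finiteness of $\nu$ ensures the truncated measure is finite, meeting the hypothesis of these existence results. Since $\nu\chi_{B(x_0,2^m)} \ge \mu\chi_{B(x_0,2^m)}$ and $u_m, w_m$ share the zero boundary datum, the comparison principle yields $w_m \ge u_m$ pointwise; the same reasoning applied between $w_m$ and the restriction of $w_{m+1}$ to $B(x_0, 2^{m+1})$ shows that the zero extensions of $\{w_m\}$ form a pointwise monotone increasing sequence, and I would set $w(x) := \lim_m w_m(x)$.

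Applying the global Wolff potential estimate to each $w_m$ (extended by zero, whence $\inf w_m = 0$) furnishes the uniform bound
\begin{equation*}
w_m(x) \le c\, \mathbf{W}_{\frac{2k}{k+1},k+1}(\nu\chi_{B(x_0,2^m)})(x) \le c\, \mathbf{W}_{\frac{2k}{k+1},k+1}\nu(x),
\end{equation*}
which is finite a.e.\ by hypothesis; hence $w < \infty$ a.e., so $-w \in \Phi^k(\mathds{R}^n)$. Weak continuity then promotes the monotone convergence $w_m \to w$ (which is $L^1_{\text{loc}}$ convergence by dominated convergence against the Wolff potential) to weak convergence $\mu_k[-w_m] \rightharpoonup \mu_k[-w]$; the weak limit of $\nu\chi_{B(x_0,2^m)}$ is plainly $\nu$, so $\mu_k[-w] = \nu$ on $\mathds{R}^n$, and passing to the a.e.\ limit of $w_m \ge u_m$ along the subsequence that defines $u$ yields $w \ge u$.

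The main obstacle I anticipate is the verification that $\inf_{\mathds{R}^n} w = 0$. Although each $w_m$ vanishes outside $B(x_0, 2^{m+1})$ (so that $\inf w_m = 0$), the monotone limit $w$ need not vanish at any single point. The resolution should exploit the Wolff potential bound $w \le c\, \mathbf{W}_{\frac{2k}{k+1},k+1}\nu$ in tandem with the hypothesis $\mathbf{W}_{\frac{2k}{k+1},k+1}\nu < \infty$ a.e.\ and the quantitative tail control furnished by Lemma \ref{lemtailest}, in order to locate a sequence of points $y_j$ with $|y_j| \to \infty$ along which $\mathbf{W}_{\frac{2k}{k+1},k+1}\nu(y_j) \to 0$, thereby driving $w(y_j) \to 0$ and completing the proof.
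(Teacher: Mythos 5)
The paper does not prove this lemma itself; it is quoted from \cite{PV2}, Lemma 4.7. Your overall strategy—construct $w_m$ as Dirichlet solutions with data $\nu\chi_{B(x_0,2^m)}$, use the comparison principle for both $w_m\geq u_m$ and monotonicity, appeal to the Wolff potential estimate for a.e.\ finiteness, and then use weak continuity of $\mu_k$ to identify the limit measure—is exactly the natural route and essentially what \cite{PV2} does. However, one step as you wrote it would fail. You apply the global Wolff potential estimate (Theorem~\ref{Hesswolff}) to the zero extension $\tilde{w}_m$ of $w_m$, asserting that then $\inf \tilde{w}_m = 0$; but Theorem~\ref{Hesswolff} requires $-\tilde{w}_m\in\Phi^k(\mathds{R}^n)$, and the zero extension of a nonpositive $k$-convex function vanishing on $\partial B(x_0,2^{m+1})$ is generally \emph{not} $k$-convex across that sphere. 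Already for $k=1$: a subharmonic $v\leq 0$ in $B$ with $v=0$ on $\partial B$ has spherical averages $\leq 0=v(p)$ at boundary points $p$ after zero extension, which is the \emph{super}-mean-value inequality, so the extension fails to be subharmonic. The correct tool is the local version of Labutin's estimate for Dirichlet solutions in a ball (as in \cite{Lab1}, or \cite{PV}, Theorem 2.1), which gives $w_m(x)\lesssim \mathbf{W}_{\frac{2k}{k+1},k+1}^{c\,2^m}(\nu\chi_{B(x_0,2^m)})(x)\leq C\,\mathbf{W}_{\frac{2k}{k+1},k+1}\nu(x)$ for $x$ in the ball, without ever extending $w_m$.

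A second, smaller point concerns $\inf_{\mathds{R}^n} w = 0$. You propose to use Lemma~\ref{lemtailest}, but that lemma only controls the \emph{difference} of Wolff potential tails at nearby base points; it says nothing about decay of $\mathbf{W}_{\frac{2k}{k+1},k+1}\nu$ at infinity. The relevant mechanism is the averaging argument the paper carries out at the end of the proof of Theorem~\ref{superfund} (the quantities $b_R$ and $a_j$): one shows via Fubini that the annular infima of the truncated Wolff potential tend to zero, and combines this with the decay of the remaining factors. Given the ball estimate $\nu(B(x,r))\lesssim r^{n-2k}$ (available in the intended application via the capacity condition on $\sigma$), that argument applies and closes the gap; under the bare hypothesis $\mathbf{W}\nu<\infty$ a.e.\ one still gets the conclusion, but it requires an argument of this averaging type rather than a pointwise tail comparison.
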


\begin{proof}[Proof of Theorem \ref{l2upbd}]  This is very similar to the previous proof so we will be slightly brief to avoid repetition.  As in the previous proof, the theorem is only nontrivial in the case when,
$$\int_0^1 \frac{\sigma(B(x_0,r))}{r^{n-2k}}\frac{dr}{r} < \infty. 
$$
Hence if $C(\sigma)$ small enough, where now 
$$C(\sigma) = \sup_{E \, \text{compact}}\frac{\sigma(E)}{\text{cap}_{2k/(k+1), k+1}(E)} , 
$$
then we may apply Theorem \ref{superfund} to find a positive function $v$ such that $\inf_{x\in \mathds{R}^n}v(x) =0$ and 
$$v(x) \geq C_0 \mathbf{W}^{\sigma}_{\frac{2k}{k+1}, k+1}(v^k)(x) + \tilde{K} \abs{x-x_0}^{2/k - n}
$$
with $\tilde{K} = \frac{k}{n-2k} K$.  Here $K$ is a constant appearing in the global Wolff potential bound Theorem \ref{Hesswolff}.  

Let $\epsilon>0$ be such that $\epsilon K \leq C_0$. Let $u_0 = c(n,k)\abs{x-x_0}^{2/k-n}$, where $c(n,k) = (\frac{k}{n-2k})\cdot({n\choose k}\omega_{n-1})^{-1/k}$.  Then $u_0$ is the (unique) fundamental solution of the $k$-Hessian operator in $\mathds{R}^n$, see \cite{TWHess3}.  By a repeated application of Lemma \ref{Hessiansequence}, we find a sequence $\{u_m\}_m$ of nonnegative functions so that $-u_m \in \Phi^k(\mathds{R}^n)$, $\inf_{x\in \mathds{R}^n}u_m(x) =0$, $u_m \in L^k_{\text{loc}}(\sigma)$ and 
$$\mu_k[-u_m] = \epsilon \sigma (u_{m-1})^{p-1} + \delta_{x_0} . 
$$
Furthermore, as in the previous proof, we see that by choice of $\tilde{K}$ that $u_m \leq v$.  Now, appealing to the weak continuity of the $k$-Hessian operator (Theorem \ref{Hessweakcont}), we assert the existence of a nonnegative $u$ such that $-u \in \Phi^k (\mathds{R}^n)$,
$$\mu_k[-u] = \epsilon \sigma u^{k} + \delta_{x_0},
$$
and $u\leq v$.  Hence $\inf_{x\in \mathds{R}^n}u(x) = 0$.  Thus, noting Lemma \ref{hesscapequiv}, we see that Theorem \ref{l2upbd} is proved with potential $\tilde{\sigma} = \epsilon \sigma$, once we make the easy observations that $v$ is comparable to the right hand side of the bound (\ref{l2upest}).
\end{proof}

\subsection{Criteria for equivalence of perturbed and unperturbed fundamental solutions} \label{isolated} In this short section we conclude the paper with necessary and sufficient conditions for fundamental solutions of $\mathcal{L}$, defined by (\ref{defnl1}), to be equivalent to the fundamental solutions of the $p$-Laplacian.  Similar results also holds for the $k$-Hessian operator.  Recall the fundamental solution of $-\Delta_p$, which we denoted by $G(x,x_0)$ in (\ref{unperturbed}), and the Wolff and Riesz potentials from (\ref{localwolff}) and (\ref{localriesz}) respectively.
\begin{cor}  \label{fundsolnequiv}  Suppose that there is a positive constant $c>0$ such that for all $x_0 \in \mathds{R}^n$ (\ref{fundsolnequivineq}) holds whenever $u(x,x_0)$ is a fundamental solution of $\mathcal{L}$.  Then $\sigma(E) \leq \rm{cap}_p(E)$ for all compact sets $E$, and furthermore (\ref{item1fund}) and (\ref{item2fund}) hold.

Conversely, suppose that (\ref{item1fund}) holds if $1<p\leq 2$, or (\ref{item2fund}) holds if $p\geq 2$.  Then there exists a positive constant $C$, depending on $n$ and $p$, such that if $\sigma(E) \leq C \rm{cap}_p(E)$ for all compact sets $E$, then for any $x_0 \in \mathds{R}^n$ there is a fundamental solution $u(x,x_0)$ of $\mathcal{L}$ with pole at $x_0$ satisfying (\ref{fundsolnequivineq}) for a constant $c = c(n,p)>0$.
\end{cor}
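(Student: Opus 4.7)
The approach is to read off both directions from the bilateral bounds of Theorems \ref{l1lowbd} and \ref{l1upbd}. Since $G(x,x_0) = \gamma_{p,n}|x-x_0|^{(p-n)/(p-1)}$, the equivalence (\ref{fundsolnequivineq}) is tantamount to the exponential factors appearing in (\ref{l1lowest}) and (\ref{l1upest}) being uniformly bounded in $x, x_0 \in \mathds{R}^n$; the entire argument reduces to analyzing these factors.

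For the necessary direction, the capacity bound $\sigma(E) \leq \text{cap}_p(E)$ is supplied immediately by Theorem \ref{l1lowbd}(a). Combining the hypothesis $u(x,x_0) \leq c\, G(x,x_0)$ with the lower bound (\ref{l1lowest}) and canceling the common factor $|x-x_0|^{(p-n)/(p-1)}$ produces
$$\exp\Bigl(c\,\mathbf{W}_{1,p}^{|x-x_0|}\sigma(x) + c\,\mathbf{I}_p^{|x-x_0|}\sigma(x_0)\Bigr) \leq c'$$
for all $x, x_0 \in \mathds{R}^n$. Fixing $x$ and sending $|x_0| \to \infty$, then fixing $x_0$ and sending $|x|\to \infty$, monotone convergence gives both $\sup_x \mathbf{W}_{1,p}\sigma(x) < \infty$ and $\sup_{x_0}\mathbf{I}_p\sigma(x_0) < \infty$, which are the conditions (\ref{item2fund}) and (\ref{item1fund}).

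For the sufficient direction, the smallness of $C$ in the capacity hypothesis permits the application of Theorem \ref{l1upbd}, producing a fundamental solution satisfying the upper bound (\ref{l1upest}); the matching lower bound (\ref{l1lowest}) comes from Theorem \ref{l1lowbd}. To convert these into (\ref{fundsolnequivineq}) I must establish that both $\sup_x \mathbf{W}_{1,p}\sigma(x)$ and $\sup_x \mathbf{I}_p\sigma(x)$ are finite, which in turn bounds the exponential factors uniformly. In the range $1 < p \leq 2$, hypothesis (\ref{item1fund}) bounds the Riesz potential directly, and the dyadic estimate (\ref{wolfflessriesz}) from Section \ref{construct} gives $(\mathbf{W}_{1,p}\sigma)^{p-1} \leq C\,\mathbf{I}_p\sigma$, hence the Wolff bound. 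For $p > 2$, hypothesis (\ref{item2fund}) bounds the Wolff potential; the capacity condition forces $\sigma(B(x,r))/r^{n-p} \leq M$ for all balls, and the elementary pointwise inequality $a \leq M^{(p-2)/(p-1)} a^{1/(p-1)}$, valid for $0 \leq a \leq M$, applied dyadically yields $\mathbf{I}_p\sigma \leq C'\, \mathbf{W}_{1,p}\sigma$.

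The analytic substance is already contained in Theorems \ref{l1lowbd} and \ref{l1upbd}, so no obstacle of real depth arises; the corollary is a reorganization of those results together with routine potential-theoretic comparisons. The only minor subtlety is that in the $p > 2$ case the constant $C_0$ must be chosen small enough that the factor $M^{(p-2)/(p-1)}$ governing the Wolff--Riesz comparison remains controlled, which is compatible with the smallness already demanded by Theorem \ref{l1upbd}.
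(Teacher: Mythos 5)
Your proposal is correct and follows essentially the same route as the paper: read the corollary off from the bilateral bounds of Theorems \ref{l1lowbd} and \ref{l1upbd}, and supply the missing half of the potential comparison via (\ref{wolfflessriesz}). The one place where you diverge slightly is the $p>2$ comparison. The paper obtains $\mathbf{I}_p\sigma \leq C\,(\mathbf{W}_{1,p}\sigma)^{p-1}$ directly from the reversed $\ell^q$ embedding in (\ref{wolfflessriesz}) (i.e., $\|a\|_{\ell^1}\leq\|a\|_{\ell^q}$ for $q=1/(p-1)<1$), which requires no a priori boundedness of the density ratio. You instead use the bound $\sigma(B(x,r))/r^{n-p}\leq M$ coming from (\ref{potest}) together with the pointwise inequality $a\leq M^{(p-2)/(p-1)}a^{1/(p-1)}$ to get $\mathbf{I}_p\sigma\leq M^{(p-2)/(p-1)}\mathbf{W}_{1,p}\sigma$, a linear comparison at the cost of invoking the density bound. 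Both yield uniform boundedness of $\mathbf{I}_p\sigma$ from that of $\mathbf{W}_{1,p}\sigma$, so the argument closes; your choice is a harmless variant rather than a gap, and your fuller unpacking of the necessity direction (reading off both potential conditions from the exponential factor in (\ref{l1lowest}) bounded against $G$) is exactly what the paper leaves implicit.
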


The Corollary is an immediate consequence of Theorems \ref{l1lowbd} and \ref{l1upbd} once we notice that if $1<p<2$ then there is a constant $C=C(n,p)>0$ such that:
$$\bigl( \mathbf{W}_{1,p}(\sigma)(x)\bigl)^{p-1} \leq C \mathbf{I}_p(\sigma)(x)
$$
for all $x\in \mathds{R}^n$.  This inequality has been proved in (\ref{wolfflessriesz}).  The opposite inequality holds if $p>2$, this is clear from (\ref{wolfflessriesz}), as the sequence space imbeddings are reversed. 

\section{Regularity away from the pole: the proof of Theorem \ref{fundreg}}\label{regularity}
In this section we will turn to considering the regularity of fundamental solutions, and in particular we will prove Theorem \ref{fundreg}.  Throughout this section we will assume the hypothesis of Theorem \ref{l1upbd} hold, and that the fundamental solution $u$ constructed there is not identically infinite.  It therefore follows from Theorem \ref{l1lowbd} that:
\begin{equation}\label{localrieszest}
\int_0^1 \frac{\sigma(B(x_0,r))}{r^{n-p}}\frac{dr}{r} = B<\infty.
\end{equation}  
By hypothesis, the constant $C(\sigma)$, defined by:
\begin{equation}\label{Csigmadef}C(\sigma) = \sup_{E\text{ compact}}\frac{\sigma(E)}{\text{cap}_{1,p}(E)},
\end{equation}
is finite, this is nothing more than a restatement of the condition (\ref{capintro}).  Thus we will assume that $C(\sigma) <C_0$, for a constant $C_0 = C_0(n,p)>0$.
The first step will be to perform some auxiliary calculations for the function $v(x)$, defined by:
\begin{equation}\label{vdefnreg}v(x) = B(n,p)\abs{x-x_0}^{\frac{p-n}{p-1}}\exp\Bigl( c\mathbf{W}_{1,p}^{\abs{x-x_0}}(\sigma)(x)+c \mathbf{I}_{p}^{\abs{x-x_0}}(\sigma) (x_0)\Bigl),
\end{equation}
for a positive constant $B(n,p)>0$ to be chosen later.  In particular, we will need to show that $v\in L^p_{\text{loc}}(\mathds{R}^n\backslash\{x_0\})$.  We will see that this is true assuming only that:
\begin{equation}\label{sobballestreg}\sigma(B(x,r))\lesssim C(\sigma)r^{n-p}\text{ for all balls }B(x,r)\subset\mathds{R}^n,\end{equation}
with the implicit constant depending on $n$ and $p$.  Display (\ref{sobballestreg}) is a special case of (\ref{Csigmadef}), using (\ref{potest}).

\begin{lem}\label{lebexpest}  There exists a constant so that if $\sigma(B(x,r))\leq C_1 r^{n-p}$ for all balls $B(x,r)\subset\mathds{R}^n$.   Then for any ball $B(x,r)\subset\mathds{R}^n$, it follows:
\begin{equation}\label{lebexpestst}\int_{B(x,r)} e^{a\mathbf{W}_{1,p}(\chi_{B(x,r)}d\sigma)} dx\leq C(r, p, C_1),
\end{equation}
for a constant $a\leq A/(C_1)^{1/(p-1)}$ with $A>0$ depending on $n$ and $p$.
\end{lem}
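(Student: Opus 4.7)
The plan is to control the moments $\int_B \mathbf{W}_{1,p}(\chi_B d\sigma)^m\,dx$ and then sum the Taylor series of the exponential. Writing $B=B(x_0,r)$ and $\mathbf{W}:=\mathbf{W}_{1,p}(\chi_B d\sigma)$, it will suffice to establish a bound of the form
$$\int_B \mathbf{W}^m\,dx \le (Km)^m |B|, \qquad K=C(n,p)\,C_1^{1/(p-1)},$$
uniformly for $m\ge 1$: combining this with $m^m\le e^m m!$ in the identity
$$\int_B e^{a\mathbf{W}}\,dx=\sum_{m=0}^{\infty}\frac{a^m}{m!}\int_B \mathbf{W}^m\,dx$$
produces a geometric series that converges whenever $a\le A/C_1^{1/(p-1)}$ for a suitable $A=A(n,p)>0$, yielding the stated bound.

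First I would apply Minkowski's integral inequality (valid since $m\ge 1$) to the defining integral of $\mathbf{W}$, obtaining
$$\|\mathbf{W}\|_{L^m(B,dx)}\le \int_0^\infty \left(\int_B \sigma(B(y,t)\cap B)^{m/(p-1)}\,dy\right)^{1/m}\frac{1}{t^{(n-p)/(p-1)}}\frac{dt}{t}.$$
Next I would estimate the inner integral using the hypothesis $\sigma(B(y,t))\le C_1 t^{n-p}$ combined with Fubini. When $m\ge p-1$, the pointwise inequality $\sigma(B(y,t)\cap B)^{m/(p-1)}\le(C_1 t^{n-p})^{m/(p-1)-1}\,\sigma(B(y,t)\cap B)$ reduces matters to $\int_B \sigma(B(y,t)\cap B)\,dy$, which Fubini identifies with $\int_B |B(z,t)\cap B|\,d\sigma(z)$ and so bounds by $C\min(t^n,r^n)\sigma(B)$, with $\sigma(B)\le C_1 r^{n-p}$ by hypothesis. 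When $m<p-1$ (which can only happen for $p>2$), I would instead use Jensen's inequality in the concave direction to handle the exponent $m/(p-1)<1$.

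Splitting the $t$-integral at $t=2r$ and computing the resulting elementary one-variable integrals, both the main range $t<2r$ and the tail $t>2r$ produce a factor of $|B|^{1/m}$ multiplied by $C_1^{1/(p-1)}$, with the main range contributing one additional factor of $m$. This yields $\|\mathbf{W}\|_{L^m(B,dx)}\le C(n,p)\,m\,C_1^{1/(p-1)}|B|^{1/m}$, and raising to the $m$-th power gives the moment bound displayed above.

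The hard part will be verifying that the $L^m$-norm grows no faster than linearly in $m$; this reflects a delicate cancellation between the ball-growth exponent $n-p$ in the hypothesis and the Wolff exponent $(n-p)/(p-1)$ in the definition of $\mathbf{W}$. After the pointwise bound is applied to the $\sigma^{m/(p-1)-1}$ factor, the powers of $t$ arrange themselves so that the $dt/t$-integral over $(0,2r)$ contributes only a factor of order $m$, rather than anything worse, and nothing worse than a constant from the tail. The case $m<p-1$, while not conceptually different, demands the most care in the bookkeeping because Jensen's inequality replaces the pointwise bound and subtly alters the dependence on $|B|$.
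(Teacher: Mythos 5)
Your proposal is correct, and it takes a genuinely different route from the paper's proof. The paper avoids moment estimates altogether: it invokes Mingione's BMO regularity theorem to produce a $p$-superharmonic solution $w$ of $-\Delta_p w = \sigma$ on a larger ball with $\|w\|_{BMO} \lesssim C_1^{1/(p-1)}$, applies the John--Nirenberg lemma to get exponential integrability of $w$, and then uses the lower Wolff potential estimate $w(y) \gtrsim \mathbf{W}_{1,p}(\chi_{B(x,r)} d\sigma)(y)$ to transfer the conclusion to the Wolff potential. You instead compute the $L^m(B,dx)$ norms of $\mathbf{W}$ directly via Minkowski's integral inequality, and the cancellation you flag is indeed the heart of the matter: after the pointwise estimate on $\sigma(B(y,t)\cap B)^{m/(p-1)}$, the $t$-power in the main range $t<2r$ is exactly $t^{p/m-1}$, so $\int_0^{2r} t^{p/m-1}\,dt = \frac{m}{p}(2r)^{p/m}$ contributes a single linear factor of $m$ (and the tail $t>2r$, where $\sigma(B(y,t)\cap B)=\sigma(B)$ is $y$-independent, contributes nothing extra); combined with $m^m \le e^m m!$ this gives the needed geometric series. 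The paper notes in passing that a moment-based argument is possible by adapting its Lemma 4.9 (the dyadic-Carleson-measure route leading to Corollary 4.10), but dismisses it as lengthy. Your version sidesteps both the dyadic machinery and Mingione's theorem, making it the most elementary and self-contained of the three. The cost is the slightly fussier bookkeeping in the $m<p-1$ sub-case, where Jensen's inequality temporarily introduces a $|B|^{1/m - 1/(p-1)}$ factor that must recombine with $r^{n/(p-1)}$ from the $dt/t$ integral; as you observe, it does so cleanly (and without even the factor of $m$), so the moment bound holds uniformly.
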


There are several ways one can prove this lemma, for instance one can adopt the proof of Lemma \ref{mWolffest}, leading to Corollary \ref{corexpest}, which requires some lengthy estimates of sums of dyadic cubes.  We shall avoid this by instead offering a more elegant proof, employing a regularity result from \cite{Min07}.

\begin{proof}  Fix a ball $B(x,r)$.  Then under the present assumption on $\sigma$, we may apply  Theorem 1.12 of \cite{Min07}, to find a $p$-superharmonic solution $w$ of:
\begin{equation}\begin{cases}
-\Delta_p u = \sigma \text{ in } B(x, 10r),\\
\, u = 0 \text{ on }\partial B(x,10r).
\end{cases}
\end{equation}
so that $w\in BMO(B(x,5r))$, and furthermore:
$$\sup_{B(z,s)\subset B(x,5r)} \dashint_{B(z, s)} \Bigl | w(y)  -  \dashint_{B(z,s)} w (y) dy \Bigl | dy \lesssim C_1^{1/(p-1)}.
$$
Therefore, by the John Nirenberg lemma, it follows that there exists a constant $c\lesssim C_1^{-1/(p-1)}$ so that:
\begin{equation}\label{expestafterjn}\dashint_{B(x, r)} e^{c w(y)} dy \leq \exp\Bigl(c \dashint_{B(x, r)} w(y) dy\Bigl)<\infty
\end{equation}
Employing the local Wolff potential estimate, Theorem 3.1 in \cite{KM}, it follows, for $y\in B(x,r)$ that:
\begin{equation}\begin{split}\label{lowbdwest}w(y) & \geq C \int_0^{4r}\Bigl(\frac{\sigma(B(y,s))}{s^{n-p}}\Bigl)^{1/(p-1)}\frac{ds}{s} \\
& \geq C \mathbf{W}_{1,p}(\chi_{B(x,r)} d\sigma)(y).
\end{split}\end{equation}
Substituting (\ref{lowbdwest}) into (\ref{expestafterjn}), the lemma follows.
\end{proof}

With this lemma proved, we may now prove that $v\in L^p_{\text{loc}}(\mathds{R}^n)\backslash\{x_0\}$.
\begin{lem} \label{lplocest} There exists $C_0$ so that if $C(\sigma)<C_0$, then:
$$v\in L^p_{\text{loc}}(\mathds{R}^n)\backslash\{x_0\}.$$
\end{lem}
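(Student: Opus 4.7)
The plan is to bound the three factors in the definition (\ref{vdefnreg}) of $v$ separately, exploiting the fact that $\abs{x-x_0}$ is bounded both above and below on any compact $K \subset \mathds{R}^n \setminus \{x_0\}$. Fix such a $K$, and choose $0 < r_1 < r_2$ so that $K \subset \{x : r_1 \leq \abs{x-x_0} \leq r_2\}$.

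First I would handle the two easy factors. The prefactor $\abs{x-x_0}^{(p-n)/(p-1)}$ is uniformly bounded on $K$ by $r_1^{(p-n)/(p-1)}$. The second exponential factor is also bounded, because for $x \in K$ one has $\mathbf{I}_p^{\abs{x-x_0}}(\sigma)(x_0) \leq \mathbf{I}_p^{r_2}(\sigma)(x_0)$, and this is finite: the integral over $(0,1)$ is controlled by (\ref{localrieszest}), while the integral over $(1,r_2)$ is bounded by a logarithmic quantity thanks to the ball estimate (\ref{sobballestreg}).

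The only nontrivial factor is the Wolff exponential. Here I would localize the measure to a large ball containing $K$. Setting $B_0 := B(x_0, 2r_2)$, one observes that for $x \in K$ and $0 < r \leq \abs{x-x_0}$, any $y \in B(x,r)$ satisfies $\abs{y-x_0} \leq \abs{y-x}+\abs{x-x_0}\leq 2\abs{x-x_0}\leq 2r_2$, so $B(x,r) \subset B_0$. Consequently $\sigma(B(x,r)) = (\chi_{B_0}\sigma)(B(x,r))$ for the whole range of $r$ appearing in $\mathbf{W}_{1,p}^{\abs{x-x_0}}$, and therefore
\begin{equation*}
\mathbf{W}_{1,p}^{\abs{x-x_0}}(\sigma)(x) \leq \mathbf{W}_{1,p}(\chi_{B_0}\sigma)(x), \qquad x \in K.
\end{equation*}
Combining this with the previous bounds yields the pointwise estimate $v^p \lesssim \exp(cp\,\mathbf{W}_{1,p}(\chi_{B_0}\sigma))$ on $K$, with an implicit constant depending only on $n, p, r_1, r_2$, and $C(\sigma)$.

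The final step is to invoke Lemma \ref{lebexpest} applied to the ball $B_0$, which yields
\begin{equation*}
\int_{B_0} \exp\bigl(a\, \mathbf{W}_{1,p}(\chi_{B_0}\sigma)\bigr)\, dx < \infty
\end{equation*}
for any $a \leq A/C_1^{1/(p-1)}$, where by (\ref{sobballestreg}) we may take $C_1 \lesssim C(\sigma)$. The crux of the argument, and where the smallness constant $C_0$ in the statement enters, is to force the exponent $cp$ --- with $c$ the constant from Theorem \ref{l1upbd} --- to satisfy $cp \leq A/C_1^{1/(p-1)}$. The main obstacle is therefore bookkeeping: one must verify that as $C(\sigma)$ shrinks, the constant $c$ produced by Theorem \ref{l1upbd} stays bounded (tracing through its proof and Theorem \ref{superfund}, the exponent $\beta$ only needs $\beta A C(\sigma) < 1$ and can otherwise be fixed in terms of $n,p$), so that the right-hand side $A/C_1^{1/(p-1)}$ is eventually much larger than $cp$. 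Once $C_0 = C_0(n,p)$ is taken small enough to enforce $cp \leq A/C_1^{1/(p-1)}$, the desired bound $\int_K v^p\, dx < \infty$ follows immediately, completing the proof.
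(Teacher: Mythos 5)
Your argument is correct and, in the localization step, genuinely simpler than what the paper does. The paper covers $K$ by a finite family of small balls $B(x_j,r_j)$ and, for each ball, splits the measure $\sigma$ inside $\mathbf{W}_{1,p}^{\abs{x-x_0}}$ into the piece supported on $B(x_j,2r_j)$ and the tail outside it; the tail contributes a term bounded by $C(\sigma)^{1/(p-1)}\log(\cdot)$ via (\ref{sobballestreg}), leaving only $\mathbf{W}_{1,p}(\chi_{B(x_j,2r_j)}d\sigma)$ to be fed into Lemma \ref{lebexpest}. You instead observe that for $x\in K$ and $r\le\abs{x-x_0}$ the ball $B(x,r)$ is already contained in the single large ball $B_0=B(x_0,2r_2)$, so the truncated Wolff potential is dominated outright by $\mathbf{W}_{1,p}(\chi_{B_0}d\sigma)(x)$, and one application of Lemma \ref{lebexpest} on $B_0$ finishes. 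This sidesteps both the covering and the tail estimate; the price is that the constant $C(r,p,C_1)$ from Lemma \ref{lebexpest} now depends on the (possibly large) radius $2r_2$ rather than on the small radii $r_j$, but that is harmless since only finiteness is required. Both routes hinge on the same bookkeeping, namely that the exponent $cp$ can be forced below $A/C_1^{1/(p-1)}$ by taking $C_0=C_0(n,p)$ small, and you correctly flag that the constant $c$ coming out of Theorem \ref{l1upbd} (via the $\beta$ of Theorem \ref{superfund}) can be fixed independent of $C(\sigma)$ as $C(\sigma)\to 0$.
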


\begin{proof}  Let $K\subset \mathds{R}^n\backslash\{x_0\}$ be a compact set, and let $B(x_j, r_j)$ be a finite cover of $K$.  Then, note that by crude estimates:
\begin{equation}\begin{split}\label{firstestvp}
\int_K & v^p dx  \lesssim d(K, x_0)^{p(n-p)/(p-1)} \exp\Bigl(cp\int_0^{|x_0|+ \text{diam}(K)}\frac{\sigma(B(x_0,r)}{r^{n-p}}\frac{dr}{r}\Bigl)\\
& \cdot \sum_j \int_{B(x_j, r_j)} \exp\Bigl(pc \int_{0}^{x-x_0}\Bigl(\frac{\sigma(B(z,r)\backslash B(x_j, 2r_j))}{r^{n-p}}\Bigl)^{1/(p-1)}\frac{dr}{r}\Bigl) \\
& \;\;\;\;\;\;\;\;\;\cdot e^{pc \mathbf{W}_{1,p}(\chi_{B(x_j,2 r_j)} d\sigma)}dx.
\end{split}\end{equation}
Employing the estimate (\ref{sobballestreg}), and recalling the definition of the constant $B$ from (\ref{localrieszest}), we readily derive:
$$\int_0^{|x_0|+ \text{diam}(K)}\frac{\sigma(B(x_0,r)}{r^{n-p}}\frac{dr}{r}\lesssim B + C(\sigma)(\log( |x_0| + \text{diam(K)})),
$$
and using the same estimate on $\sigma$, we similarly see for all $z\in B(x_j, r_j)$, that:
\begin{equation}\begin{split}\nonumber
\int_{0}^{x-x_0}\Bigl(&\frac{\sigma(B(z,r)\backslash B(x_j, 2r_j))}{r^{n-p}}\Bigl)^{1/(p-1)}\frac{dr}{r}\\
& \leq \int_{r_j}^{\text{diam}(K) + |x_0|}\Bigl(\frac{\sigma(B(x_j,r))}{r^{n-p}}\Bigl)^{1/(p-1)}\frac{dr}{r}\\
& \lesssim C(\sigma)^{1/(p-1)} \log\Bigl(\frac{\text{diam}(K)+|x_0|}{r_j}\Bigl).
\end{split}\end{equation}
Substituting these two displays into (\ref{firstestvp}), it follows:
\begin{equation}\label{secondestvp}\int_K v^p dx \leq \sum_j C(n,p,C(\sigma),r_j,K)\int_{B(x_j, r_j)} e^{pc \mathbf{W}_{1,p}(\chi_{B(x_j,2 r_j)} d\sigma)}dx.
\end{equation}
Note that under the current assumptions, we may choose $C_1\lesssim C(\sigma)$, with $C_1$ as in Lemma \ref{lebexpest}.  This is just a restatement of (\ref{sobballestreg}).  It follows that if $C_0$ is chosen small enough in terms on $n$ and $p$, then (\ref{lebexpestst}) will be valid, and therefore:
$$\int_{B(x_j, 2r_j)} e^{pc \mathbf{W}_{1,p}(\chi_{B(x_j,2 r_j)} d\sigma)}dx<\infty, \text{ for each } j.
$$
This completes the proof of the lemma.
\end{proof}

Note that in a similar way, using Corollary \ref{corexpest} instead of Lemma \ref{lebexpest}, we deduce the following lemma:
\begin{lem} \label{lpsigmaest} There exists $C_0$ so that if $C(\sigma)<C_0$, then:
$$v\in L^p_{\text{loc}}(\mathds{R}^n\backslash \{x_0\}, d\sigma).
$$
\end{lem}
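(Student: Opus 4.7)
The plan is to follow the proof of Lemma \ref{lplocest} verbatim, replacing Lebesgue measure by $d\sigma$ and invoking Corollary \ref{corexpest} in place of Lemma \ref{lebexpest}. Concretely, fix a compact set $K\subset\mathds{R}^n\backslash\{x_0\}$ and choose a finite cover of $K$ by balls $B(x_j,r_j)$ with $r_j$ comparable to $d(K,x_0)$ but small enough that $B(x_j,2r_j)\subset\mathds{R}^n\backslash\{x_0\}$. On such balls, the factor $|x-x_0|^{\frac{p-n}{p-1}}$ is bounded, and the factor $\exp(c\,\mathbf{I}_p^{|x-x_0|}(\sigma)(x_0))$ is bounded using the local finiteness hypothesis (\ref{localrieszest}) together with the estimate (\ref{sobballestreg}) applied to the annulus $(|x_0|,|x_0|+\mathrm{diam}(K))$, exactly as in the previous lemma.

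The remaining task is therefore to estimate
\[
\int_{B(x_j,r_j)} \exp\bigl(pc\,\mathbf{W}_{1,p}^{|x-x_0|}(\sigma)(x)\bigr)\,d\sigma(x).
\]
I would split the Wolff potential by writing $\sigma=\chi_{B(x_j,2r_j)}\sigma+\chi_{\mathds{R}^n\setminus B(x_j,2r_j)}\sigma$. Using the subadditivity $(a+b)^{1/(p-1)}\le C_{p}(a^{1/(p-1)}+b^{1/(p-1)})$, one obtains
\[
\mathbf{W}_{1,p}^{|x-x_0|}(\sigma)(x) \le C\,\mathbf{W}_{1,p}\bigl(\chi_{B(x_j,2r_j)}\sigma\bigr)(x) + C\int_{r_j}^{|x_0|+\mathrm{diam}(K)}\Bigl(\frac{\sigma(B(x_j,r))}{r^{n-p}}\Bigr)^{1/(p-1)}\frac{dr}{r},
\]
and the second term is bounded by a constant depending on $n,p,C(\sigma),r_j,K$ by (\ref{sobballestreg}), precisely as in the calculation leading to (\ref{secondestvp}). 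Pulling this tail contribution out as a multiplicative constant reduces matters to bounding
\[
\int_{B(x_j,r_j)} \exp\bigl(c'\,\mathbf{W}_{1,p}(\chi_{B(x_j,2r_j)}d\sigma)(x)\bigr)\,d\sigma(x),
\]
where $c'$ depends on $n$, $p$ and $C(\sigma)$.

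At this point I would apply Corollary \ref{corexpest} with $E=B(x_j,2r_j)$ and $\beta=c'$: the hypothesis $C\beta<1$ of that corollary holds provided $C(\sigma)$ is smaller than a threshold $C_0=C_0(n,p)$, because the constant $C$ in Lemma \ref{mWolffest} depends on $C(\sigma)$ and $c'$ itself is proportional to $c$ (fixed) times a power of $C(\sigma)$. This gives
\[
\int_{B(x_j,r_j)} \exp\bigl(c'\,\mathbf{W}_{1,p}(\chi_{B(x_j,2r_j)}d\sigma)\bigr)\,d\sigma \le \frac{\sigma(B(x_j,2r_j))}{1-C c'} < \infty,
\]
the last bound being finite by (\ref{sobballestreg}). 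Summing over the finite cover of $K$ yields $v\in L^p_{\mathrm{loc}}(\mathds{R}^n\backslash\{x_0\},d\sigma)$.

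The main, and essentially only, technical point is to verify that the threshold $C_0$ can be chosen uniformly in $n,p$ so as to meet both the smallness condition needed in the Minkowski-type split (which fixes $c'$ in terms of $c$ and $C(\sigma)$) and the smallness condition $Cc'<1$ required to invoke Corollary \ref{corexpest}; this is a routine bookkeeping, since $c$ depends only on $n,p$ and both conditions force only $C(\sigma)$ to lie below a constant depending on $n,p$.
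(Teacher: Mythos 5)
Your proof is correct and takes exactly the approach the paper intends: the paper proves this lemma in a single line, observing that it follows as in Lemma \ref{lplocest} upon substituting $d\sigma$ for Lebesgue measure and invoking Corollary \ref{corexpest} in place of Lemma \ref{lebexpest}. You have simply carried out the indicated substitution in detail, and the bookkeeping of smallness thresholds you describe is the same as what is already needed in Lemma \ref{lplocest}.
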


We are now in a position to prove Theorem \ref{fundreg}.

\begin{proof}[Proof of Theorem \ref{fundreg}.]   Let us assume that $C_0$ has been chosen so that Lemmas \ref{lplocest} and \ref{lpsigmaest} are both valid.  To prove the theorem, we will aim to construct the sequence $\{u_m\}_m$ as in (\ref{pfundapprox}) from the proof of Theorem \ref{l1upbd} with the additional property that $u_m\in W^{1,p}_{\text{loc}}(\mathds{R}^n\backslash \{x_0\})$, with constants independent on $m$.  We will do this inductively, as in the proof of Theorem \ref{l1upbd}.  Let $u_0 = G(\,\cdot, x_0)$, with $G(x,x_0)$ as in (\ref{unperturbed}).  Note $G(\cdot, x_0)\in C_{\text{loc}}^{\infty}(\mathds{R}^n\backslash\{x_0\})$.  Suppose that we have constructed $u_1, \dots, u_{m-1}$ so that:
$$-\Delta_p u_j = \epsilon\sigma u_{j-1}^{p-1} + \delta_{x_0}, 
$$
with $u_j\leq v$, and $u_{j-1}\in W^{1,p}_{\text{loc}}(\mathds{R}^n\backslash \{x_0\})$.  Let $K$ be a compact subset of $\mathds{R}^n\backslash\{0\}$, then we claim that $u_{m-1}^{p-1}d\sigma \in W^{-1, p'}(K)$.  This will follow from the capacity strong type inequality.  Indeed, since $\sigma$ satisfies (\ref{capintro}) with constant $C(\sigma)<C_0$, it follows \cite{MazSob}, that:
$$\int |h|^p d\sigma \leq C(\sigma)\Bigl(\frac{p}{p-1}\Bigl)^p \int |\nabla h|^p dx, \text{ for all }h\in C^{\infty}_0(\mathds{R}^n),
$$
and this can be extended by continuity to functions $h \in W^{1,p}_{0}(\mathds{R}^n)$.  Now, let $h\in C_0^{\infty}(K)$, and $K'$ be a subset $K\subset\subset K' \subset\subset \mathds{R}^n\backslash \{x_0\}$  along with a function $g\in C^{\infty}_0(K')$, $g\equiv 1 \text{ on } K$, $g\geq0$. Then:
\begin{equation}\begin{split}\nonumber\int h u^{p-1}_{m-1}d\sigma & = \int hu_{m-1}^{p-1} g^{p-1}d\sigma \leq \Bigl(\int |h|^p d\sigma \Bigl)^{1/p} \Bigl(\int u_{m-1}^p g^p d\sigma\Bigl)^{\frac{p-1}{p}} \\
& \lesssim ||\nabla h||_p ||\nabla (u_{m-1}g)||^{p-1}_p\leq C_K||\nabla h||_p,
\end{split}\end{equation}
and hence $u_{m-1}^{p-1}d\sigma \in W^{-1, p'}(K)$, as claimed.  Now let $\nu_j$ be the measure:
$$\nu_j = \frac{\chi_{B(x_0, 2^{-j})}}{|B(x_0, 2^{-j})|},
$$
from Poincar\'{e}'s inequality it follows that $\nu_j \in W^{-1,p'}(B(x_0, 2^j))$.  Note in addition that $\nu_j\rightarrow \delta_{x_0}$ weakly as measures.  Invoking the theory of monotone operators, see e.g. \cite{Li69}, we assert the existence of a unique solution $w_m^j\in W^{1,p}_0(B(x_0, 2^j))$ of:
\begin{equation}\begin{cases}
-\Delta_p w_m^j = \epsilon \sigma u_{m-1}^{p-1}\chi_{B(x_0, 2^j)\backslash B(x_0, 2^{-j})} +\nu_j \text{ in } B(x_0, 2^j),\\
w_m^j \in W^{1,p}_0(B(x_0, 2^j)).
\end{cases}
\end{equation}
Furthermore, by the global potential estimate for renormalized solutions, Theorem 2.1 of \cite{PV}, it follows:
$$w_m^j(x) \leq K\epsilon \mathbf{W}_{1,p} (u_{m-1}^{p-1}d\sigma )(x) +K \mathbf{W}_{1,p}(\nu_k)(x),
$$
where the constant $K>0$ can be assumed to be the same as the constant appearing in Theorem \ref{kmpotest}.
But, for $x\not\in B(x_0, 2\cdot 2^{-j})$, a simple computation yields:
\begin{equation}\mathbf{W}_{1,p}(\nu_k)(x)\leq \frac{n-p}{p-1}2^{\frac{n-p}{p-1}}|x-x_0|^{\frac{p-n}{p-1}}.
\end{equation}
Using the hypothesis $u_{m-1}\leq v$, it follows for $x\in B(x_0, 2^j)\backslash B(x_0, 2^{1-j})$ that:
$$w_m^j(x) \leq K\epsilon \mathbf{W}_{1,p} (v^{p-1}d\sigma )(x) +K \frac{n-p}{p-1}2^{\frac{n-p}{p-1}}|x-x_0|^{\frac{p-n}{p-1}}.
$$
Let us now choose the constant $B(n,p)$ appearing in (\ref{vdefnreg}) as $B(n,p) = 2K(n-p)/(p-1)2^{\frac{n-p}{p-1}}$.  Then, by construction of $v$, it follows as in the argument around display (\ref{quasintsuper}), that we can choose $\epsilon>0$ and $C_0>0$ so that if $C(\sigma)<C_0$, then:
$$K\epsilon \mathbf{W}_{1,p} (v^{p-1}d\sigma )(x) +K \frac{n-p}{p-1}2^{\frac{n-p}{p-1}}|x-x_0|^{\frac{p-n}{p-1}}\leq v(x),
$$
and hence,
\begin{equation}\label{umjbd}w_m^j (x) \leq v(x), \text{ for all } x\in B(x_0, 2^j)\backslash B(x_0, 2 \cdot 2^{-j}).
\end{equation}
We are now in a position to derive the uniform gradient estimate.  Let $\phi \in C^{\infty}_0(B(x_0, 2^j)\backslash B(x_0, 2\cdot 2^{-j})$.  Then test the weak formulation of $w_m^j$ with the valid test function function $\phi^p \cdot w_m^j \in W^{1,p}_0(B(x_0, 2^j))$.  It follows:
$$\int |\nabla w_m^j|^p \phi^p dx = p\int |\nabla w_m^j|^{p-2}\nabla w_m^j \cdot \nabla \phi w_m^j \phi^{p-1} + \int \phi^p u_m^j u_{m-1}^{p-1}d\sigma
$$
Using Young's inequality in the first term, and utilizing the bounds (\ref{umjbd}) and $u_{m-1}\leq v$, we find that:
$$\frac{1}{p}\int|\nabla w_m^j|^p dx\leq \int v^p \phi^p d\sigma + \frac{1}{p}\int v^p |\nabla \phi|^p dx=C(n,p,C(\sigma),\text{supp}(\phi))<\infty,
$$
where Lemmas \ref{lplocest} and \ref{lpsigmaest} have been used.  Using Theorems \ref{limsuper} and \ref{weakcont}, we let $j\rightarrow \infty$ to find a solution $u_m$ of (\ref{pfundapprox}).   Furthermore, by weak compactness in $W^{1,p}$, we deduce that $u_m \in W^{1,p}_{\text{loc}}(\mathds{R}^n\backslash\{x_0\})$ with the local bound on the gradient independent of $m$.  We now follow the rest of the proof of Theorem \ref{l1upbd} from display (\ref{pfundapprox}), using weak compactness again to deduce a fundamental solution $u\in W^{1,p}_{\text{loc}}(\mathds{R}^n\backslash\{x_0\})$, so that $u\leq v$.
\end{proof}

\end{document}